\DeclareMathOperator{\crit}{crit}
\DeclareMathOperator{\QCoh}{QCoh}
\DeclareMathOperator{\Op}{Op}
\DeclareMathOperator{\Whit}{Whit}
\DeclareMathOperator{\Hom}{Hom}
\DeclareMathOperator{\Spec}{Spec}
\DeclareMathOperator{\rep}{-rep}
\DeclareMathOperator{\can}{can}
\DeclareMathOperator{\IndCoh}{IndCoh}
\newtheorem{theorem}{Theorem}[section]
\newtheorem{lemma}[theorem]{Lemma}
\newtheorem{proposition}[theorem]{Proposition}
\newtheorem{corollary}[theorem]{Corollary}
\newtheorem{definition}[theorem]{Definition}
\theoremstyle{remark}
\newtheorem{example}[theorem]{Example}
\newtheorem{remark}[theorem]{Remark}
\title{Categorical Moy-Prasad theory}
\author{David Yang}
\begin{document}

\maketitle

\begin{abstract}
We categorify the theory developed by Moy-Prasad in \cite{MP1}. More precisely, we define a depth filtration on any category with an action of the loop group $G((t))$ and prove a $2$-categorical generation statement inspired by the theory of unrefined minimal K-types. Using our generation theorem, we compute the depth filtration on the category of Whittaker sheaves and on the category of Kac-Moody modules. On the Whittaker side, we show that it recovers and extends the filtration constructed by Raskin in \cite{Whit}. For KM modules, our computation encodes several new localization statements, as well as confirming a conjecture of Chen-Kamgarpour.
\end{abstract}

\tableofcontents

\section{Introduction}

The local Langlands conjecture roughly predicts that representations of a group $G(F)$ over a local field can be parametrized in terms of representations of $\operatorname{Gal}(\overline{F}/F)$ into a Langlands dual group. Structures on one side of the correspondence should thus also exist on the other side.

On the Galois side, one salient piece of structure is the upper-numbered ramification filtration on $\operatorname{Gal}(\overline{F}/F)$. This is a descending filtration $\operatorname{Gal}(\overline{F}/F)^r$ indexed by rational numbers $r$, with $\operatorname{Gal}(\overline{F}/F)^{-1}$ the full Galois group and $\operatorname{Gal}(\overline{F}/F)^{0}$ the inertia subgroup. For any representation of $\operatorname{Gal}(\overline{F}/F)$, we define its depth to be the infimum of those $r$ for which $\operatorname{Gal}(\overline{F}/F))^r$ acts trivially. As an example, a representation is of depth $0$ exactly when it is tamely ramified.

We thus expect to be able to associate a corresponding invariant to an irreducible representation of $G(F)$, which would provide a measure of how wildly ramified it is. This was accomplished in \cite{MP1} by Moy-Prasad. In fact, they do more: they define the notion of an unrefined minimal K-type, which comes with an intrinsic depth. Each irreducible representation of $G(F)$ contains a unrefined minimal K-type (typically not unique), and Moy-Prasad show that its depth does not depend on the choice of unrefined minimal K-type. Thus, it defines an intrinsic invariant of the representation. In good cases (e.g., in sufficiently large characteristic), local Langlands should intertwine these two notions of depth.

Our goal is to categorify all the previously-mentioned mathematics. The context in which we work is that of the local geometric Langlands conjecture. Instead of working with a group $G(F)$, we work over an algebraically closed field of characteristic $0$ and consider the loop group $G((t))$. Instead of representations, we work with cocomplete dg-categories acted on by the monoidal category of D-modules $D(G((t))$ (at critical level, but we will not worry about the level in this introduction). The local geometric Langlands conjecture proposes that such categories should roughly correspond to categories over the stack of local systems $\operatorname{LocSys}(D^{\times}).$

A point of $\operatorname{LocSys}(D^{\times})$ corresponds to a D-module on the punctured disk, i.e., a possible singularity type. To such a singularity, one can associate a rational number, called its slope --- see for instance Section 2 of \cite{CK}. This is a close analogue to the notion of depth for a Galois representation. We thus have a sequence of substacks $\operatorname{LocSys}(D^{\times})_{\leq r}$ of local systems of slope at most $r$, and we can base change a category over $\operatorname{LocSys}(D^{\times})$ to these substacks. The result is a rational-indexed filtration on our original category. Applying Langlands, we see that any cocomplete dg-category $C$ with an action of $D(G((t)))$ should admit a natural such filtration $C_{\leq r}$.

In \cite{MP1}, Moy-Prasad construct a family of subgroups $K_{x,r+}$ of $G(F)$ and define the depth of an irrep $V$ to be the smallest $r$ for which some $V^{K_{x,r+}}$ is nontrivial. Previous work of Chen-Kamgarpour in \cite{CK} had defined the corresponding subgroups of $G((t))$, and used the same definition to define the depth of such a category $C$ as a rational number. However, as explained above, we actually expect the more refined data of a depth filtration on $C$.

To define the $C_{\leq r}$, we would like to take it to be the subcategory generated by the $V^{K_{x,r+}}$. This can be made sense of via a technical lemma proved by Ben-Zvi-Gunningham-Orem in \cite{BGO}. We take this approach in Section 2 to construct the depth filtration, and prove that it has some expected properties. 

In Section 3, we introduce our key technical tool for computing the depth filtration, Theorem \ref{generation}. As mentioned above, \cite{MP1} shows that every irrep of depth $r$ contains an unrefined minimal K-type of depth $r$. We reinterpret this as a generation statement: Every $G((t))$-category purely of depth $r$ admits a nontrivial map from at least one of a set of generators, approximately corresponding to the possible unrefined minimal K-types. In particular, to check that a representation is purely of depth $r$, it suffices to check that it admits no nontrivial maps from any generators of depth $r'\neq r$.

The remainder of the paper is dedicated to using Theorem \ref{generation} to compute the depth filtration on some categories of interest. First, in Section 4, we compute it on the category $\hat{\mathfrak{g}}_{\crit}\operatorname{-mod}$ of representations of the affine Lie algebra at critical level. This computation is closely related to the aforementioned work of Chen-Kamgarpour --- in particular, the results of \cite{CK} were what inspired us to conjecture Theorem \ref{depthKM}. They also implicitly give a proof of one direction of Theorem \ref{depthKM}, which we reproduce in our language.

As one application, the computation of $\hat{\mathfrak{g}}_{\crit}\operatorname{-mod}_{\leq r}$ implies (but is much stronger than) the main conjecture of \cite{CK}. Furthermore, it implies a number of new localization statements relating categories of Kac-Moody representations and categories of D-modules. For instance, in a future paper, joint with Sam Raskin, we plan to combine the methods of this paper with those of \cite{raskinFG} to prove a conjecture of Frenkel-Gaitsgory from \cite{FG2}.

Finally, in Section 5, we compute the depth filtration on the category $\Whit$ of Whittaker sheaves on $G((t))$. The answer recovers and extends the adolescent Whittaker construction of \cite{Whit}. Indeed, the original motivation behind this paper was to understand Remark 1.22.2 of loc.cit., which suggested a link between their construction and the slope filtration on $\operatorname{LocSys}(D^{\times}).$

\subsection{Notation}

Throughout this paper, we fix an algebraically closed field $k$ of characteristic $0$ and a connected reductive group $G$ over $k$. We also fix a Borel $B$ with unipotent radical $N$ and Cartan $T$. The Lie algebras of these groups will be denoted by the respective Fraktur letters, e.g., $\mathfrak{g}$ for $G$. The weight and coweight lattices will be denoted by $X^*(T)$ and $X_*(T)$, respectively.

We define the loop group $G((t))$ as the ind-scheme representing the functor $\Hom(\Spec R, G((t)))\cong\Hom(\Spec R((t)),G).$ Its Lie algebra may be identified with $\mathfrak{g}((t))$, the space of Laurent series with coefficients in $\mathfrak{g}$. The spherical subgroup of $G((t))$ will be denoted by $G[[t]]$, with Lie algebra $\mathfrak{g}[[t]]$.

All categories we consider are cocomplete $k$-linear stable $\infty$-categories (equivalently cocomplete dg-categories) and all functors we consider will be continuous (i.e., colimit preserving). We will often use the notion of the quotient of a morphism $C\rightarrow D$, not necessarily fully faithful. It is defined to be the cokernel (i.e., the pushout with $C\rightarrow 0$) in the category of cocomplete presentable categories. We write it as $\operatorname{quot} C\rightarrow D$.

Our main objects of study are categories $C$ with a (strong) action of $G((t))$ --- for this notion, as well as for the concepts of categorical representation theory, we refer to \cite{Beraldo}. An important special case is the (derived) category of $D$-modules on some stack $X$ acted on by $G((t))$, denoted $D(X)$. For the theory of $D$-modules on infinite dimensional stacks, we refer to \cite{dmodinf}.

More generally, we can (and should) choose some level $\kappa$ and consider the category of twisted D-modules $D_{\kappa}(G((t))$. Unless otherwise stated (e.g., in Section 4), all of our results will work equally well at all levels. However, we suppress the level in statements and proofs for ease of notation. Whenever we take invariants with respect to some subgroup of $G((t))$, this subgroup is pro-unipotent and the level canonically splits over it. This fact is sufficient (again, unless otherwise stated) to generalize our proofs to all levels.

\subsection{Acknowledgements}

This paper originated from an attempt to understand a remark by Sam Raskin in \cite{Whit}, and the form of our generation theorem was inspired by the proofs of \cite{raskinFG}. We thank him for his patient explanations of those papers and others. We thank Siyan Daniel Li-Huerta for telling us about the paper \cite{MP1}. This paper also benefitted from conversations with Lin Chen, Gurbir Dhillon, Charles Fu, James Tao, Jonathan Wang, and Yifei Zhao. Finally, we would like to thank our advisor, Dennis Gaitsgory. He in large part created this flavor of categorical representation theory, which has served as such a nice playground for us. His guidance has been truly invaluable. 

\section{Definition}

\subsection{Moy-Prasad subgroups}

We extensively use a series of subgroups of $G((t))$ first described (in the p-adic setting) by Moy and Prasad in \cite{MP1}.

Let $\mathfrak{g}=\oplus\mathfrak{g}_{\alpha}$ be the decomposition of $\mathfrak{g}$ into weight spaces. For every choice\footnote{It is more customary to take $x$ a point in the Bruhat-Tits building associated to $G$. We restrict to an apartment $X_*(T)\otimes\mathbb{R}$ here partly to make this paper more self-contained and partly because of the author's ignorance of Bruhat-Tits theory.} of a point $x$ in $X_*(T)\otimes\mathbb{R}$ and a nonnegative real number $r$, we can define two Lie subalgebras of $\mathfrak{g}((t))$ by

$$\mathfrak{k}_{x,r}=\displaystyle\bigoplus_{\langle\alpha,x\rangle+i\geq r}\mathfrak{g}_{\alpha}t^i$$
$$\mathfrak{k}_{x,r+}=\displaystyle\bigoplus_{\langle\alpha,x\rangle+i > r}\mathfrak{g}_{\alpha}t^i$$

\noindent which exponentiate to subgroups $K_{x,r}$ and $K_{x,r+}$ of $G((t))$. In the special case $r=0$, $K_{x,0}$ will be a parahoric subgroup and $K_{x,0+}$ its nilpotent radical. We will denote $K_{x,0}$ by $P_x$ and $K_{x,0}/K_{x,0+}$ by $L_x$. For any $r$, the inclusions

$$K_{x,r+}\subseteq K_{x,r}\subseteq P_x$$

\noindent are inclusions of normal subgroups. This gives us an action of $P_x$ on $K_{x,r}/K_{x,r+}$, and the reader may check that this action factors through $P_x\rightarrow L_x$.

\begin{example}\label{congruence}
Consider the case $x=0$ and $r$ an integer. Then $K_{x,r}$ is the $r$th congruence subgroup and $K_{x,r+}$ is the $(r+1)$-th. The group $P_x$ and $L_x$ can be identified with $G[[t]]$ and $G$.
\end{example}

\subsection{Depth filtration}

In the arithmetic setting, we can define the depth $\leq r$ part of a smooth representation $V$ of a p-adic group as the sub-representation generated by $K_{x,r+}$-invariant vectors, as $x$ ranges over all points of $X_*(T)\otimes\mathbb{R}$. Moving to the categorical setting, we need a replacement for the operation of taking the sub-representation generated by some specified elements. This is provided by the following lemma, which is a special case of Proposition 3.2 of Ben-Zvi-Gunningham-Orem \cite{BGO}:

\begin{lemma}
Let $C$ be a category acted on by $G((t))$. Then there is a natural $G((t))$-equivariant fully faithful embedding $$a_{x,r}:D(G((t))/K_{x,r+})\otimes_{D(K_{x,r+}\backslash G((t))/K_{x,r+})} C^{K_{x,r+}}\rightarrow C$$ with a continuous $G((t))$-equivariant right adjoint $a_{x,r}^R$.
\end{lemma}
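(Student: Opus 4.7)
The plan is to verify the hypotheses of Proposition 3.2 of \cite{BGO} and invoke it. That proposition asserts exactly this statement for any pro-unipotent subgroup of a reasonable group ind-scheme, so the content of the argument reduces to recognizing that $K_{x,r+}$ is such a subgroup and unwinding the construction in our setting.

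First I would verify that $K_{x,r+}$ is pro-unipotent. By construction $\mathfrak{k}_{x,r+}$ is a pro-nilpotent Lie subalgebra of $\mathfrak{g}((t))$, exhibited as the inverse limit of the nilpotent finite-dimensional quotients $\mathfrak{k}_{x,r+}/\mathfrak{k}_{x,r+N}$ for large $N$. Exponentiating, $K_{x,r+}$ is the limit of the unipotent algebraic groups $K_{x,r+}/K_{x,r+N}$, which is what we need.

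Next I would construct the functor $a_{x,r}$ from the $G((t))$-action on $C$. Restricting the action $D(G((t)))\otimes C\rightarrow C$ to $C^{K_{x,r+}}$ yields a functor $D(G((t)))\otimes C^{K_{x,r+}}\rightarrow C$ which is equivariant for the right $K_{x,r+}$-action on the first factor (acting by $K_{x,r+}$ from the right on $G((t))$) and for the right $K_{x,r+}$-action on the second factor (where the target is invariant since $C^{K_{x,r+}}$ is already $K_{x,r+}$-invariant). Passing to invariants for the diagonal action gives the desired functor, and the residual left $G((t))$-action on $D(G((t))/K_{x,r+})$ makes it $G((t))$-equivariant.

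The main obstacle is fully faithfulness; this is precisely where pro-unipotence of $K_{x,r+}$ enters, and is the content of \cite{BGO}. The point is that for a pro-unipotent subgroup, strong and weak invariants canonically agree (up to a shift by the dualizing module, which is trivial in the pro-unipotent case), so that the unit of the adjunction between taking $K_{x,r+}$-invariants and inducing from $K_{x,r+}$-invariants to a $D(G((t)))$-module is an equivalence on $C^{K_{x,r+}}$. A standard Barr-Beck style argument then upgrades this to fully faithfulness of $a_{x,r}$ on the relative tensor product. For the right adjoint, continuity of $a_{x,r}$ and presentability of both sides allow one to invoke Lurie's adjoint functor theorem; $G((t))$-equivariance of the right adjoint is then automatic from $G((t))$-equivariance of $a_{x,r}$ by the standard fact that adjoints of morphisms in the 2-category of $G((t))$-categories automatically inherit equivariance.
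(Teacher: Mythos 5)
You take the same high-level route as the paper — reduce to $C\cong D(G((t)))$, recognize the statement as BGO Proposition 3.2, and invoke the abstract machinery — so in that sense the strategies coincide. But there is a genuine gap in how you identify the structural hypothesis that makes BGO's argument go through.

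You attribute the key input to pro-unipotence of $K_{x,r+}$, claiming that BGO's proposition holds ``for any pro-unipotent subgroup of a reasonable group ind-scheme'' and that pro-unipotence (strong $=$ weak invariants, trivial dualizing twist) is what drives fully faithfulness. This misidentifies the decisive hypothesis. The paper explicitly flags this in a remark following the lemma: the only information used about $K_{x,r+}$ is that it is \emph{normal and of finite codomension inside a parahoric subgroup} $P_x$, and it is \emph{not} known whether the statement holds for an arbitrary compact open subgroup (which can be just as pro-unipotent). The reason is that the real work is in showing that the bar-complex face map $f:M\otimes_B M\to A$ — built from the correspondence $G((t))/K_{x,r+}\times_{P_x/K_{x,r+}}K_{x,r+}\backslash G((t))\leftarrow G((t))\times_{P_x}G((t))\rightarrow G((t))$ — admits a \emph{continuous} right adjoint. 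This is where properly dualizability of the bimodule $M$ over $B=D(P_x/K_{x,r+})$ enters, and it uses the ind-properness of $G((t))\times_{P_x}G((t))\to G((t))$ (a parahoric phenomenon) together with placidity of the other leg. Pro-unipotence buys you none of this; it only governs the triviality of the dualizing twist on the group itself, not the geometry of $G((t))/P_x$.

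Concretely, your Barr–Beck sketch cannot be completed without constructing $f$ and checking that it has a continuous $A$-biequivariant right adjoint; once you try to do that, you are forced to choose $B$ to be $D(P_x/K_{x,r+})$ rather than anything built from $K_{x,r+}$ alone, and the parahoric structure is indispensable. The remainder of your outline — construction of $a_{x,r}$ from the action map, existence of a right adjoint by the adjoint functor theorem, and inheritance of equivariance — is reasonable, but the continuity and equivariance of $a_{x,r}^R$ are most cleanly obtained (as the paper does) by dualizing the entire augmented bar complex rather than by an a posteriori appeal to automatic equivariance of adjoints.
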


We reproduce their proof here for the sake of self-containment. Throughout, we make liberal use of the foundational fact (proven e.g. in \cite{Beraldo}) that
$$D(G((t)))^K\cong D(G((t))/K)\cong D(G((t)))_K.$$

\begin{proof}
We treat the case of $C\cong D(G((t)))$. In this case, $a_{x,r}$ and $a_{x,r}^R$ will be adjoint functors in the category of $G((t))$-bimodules, so the general statement follows by tensoring $\otimes_{G((t))}M$.

To match with the notation of \cite{BGO}, we take $A=D(G((t)))$, $B=D(P_x/K_{x,r+})$, and $M=D(G((t))/K_{x,r+})$. As the notation suggests, $M$ is an $A-B$ bimodule. Let us rewrite the source of $a_{x,r}$ via a bar complex. Define a simplicial category $C_{\bullet}$ with

$$C_n\cong(M\otimes_B M)^{\otimes_A(n+1)},$$

\noindent so for instance we have $C_1\cong (M\otimes_B M)\otimes_A(M\otimes_B M).$

To define the morphisms, we need maps $f:M\otimes_B M\rightarrow A$ (which will give us the face maps) and $g:B\rightarrow M\otimes_A M$ (which will give us the degeneracy maps). These are best understood in terms of \cite{BGO}'s notion of a properly dualizable bimodule, but we will take a more concrete approach here for simplicity.

Note that we have an equivalence

$$M\otimes_B M\cong D(G((t))/K_{x,r+}\times_{P_x/K_{x,r+}} K_{x,r+}\backslash G((t)))$$

\noindent where the right hand side is given by a twisted product. The desired $f$ is a pull-push map along the correspondence

\begin{equation*}
\begin{tikzcd}
 & G((t))\times_{P_x}G((t)) \arrow[ld] \arrow[rd] & \\
G((t))/K_{x,r+}\times_{P_x/K_{x,r+}} K_{x,r+}\backslash G((t)) & & G((t))
\end{tikzcd}
\end{equation*}

Note that $f$ has a continuous right adjoint (as the left map is placid in the sense of \cite{dmodinf} and the right map is ind-proper), which is even a map of $A$-bimodules. Similarly, we have an equivalence

$$M\otimes_A M\cong D(K_{x,r+}\backslash G((t))/K_{x,r+})$$

\noindent and $g$ corresponds to pushforward along the closed immersion

$$K_{x,r+}\backslash P_x/K_{x,r+}\rightarrow K_{x,r+}\backslash G((t))/K_{x,r+},$$

\noindent using that $D(K_{x,r+}\backslash P_x/K_{x,r+})\cong D(P_x/K_{x,r+})$. As for $f$, we see that $g$ has a continuous $B\times B$-equivariant right adjoint.

Now that we have defined $C_{\bullet}$, let us link it to our original statement. By the theory of bar complexes, we have

$$D(G((t))/K_{x,r+})\otimes_{D(K_{x,r+}\backslash G((t))/K_{x,r+})} D(K_{x,r+}\backslash G((t)))\cong\operatorname{colim}C_{\bullet},$$

\noindent and $a_{x,r}$ can be identified with $\operatorname{colim}C_{\bullet}\rightarrow A.$ Note that we have a canonical augmentation $\widetilde{C}_{\bullet}$ with $C_{-1}\cong A$ and $C_0\rightarrow C_{-1}$ given by the map $f$. By taking right adjoints, we produce an augmented cosimplicial category $\widetilde{C}^{\bullet}$, which gives us our desired right adjoint $\operatorname{colim}C_{\bullet}\cong\operatorname{lim}C^{\bullet}\rightarrow A.$ We see by inspection that both $\operatorname{colim}C_{\bullet}\rightarrow A$ and $\operatorname{lim}C^{\bullet}\rightarrow A$ are continuous and have natural $A$-biequivariant structures.

It remains to show fully faithfulness. For this, we use the monadicity-type results established in Section 4.7.5 of \cite{HA}, which require checking the Beck-Chevalley conditions. In this case, the check can be done by hand, and so the lemma holds.
\end{proof}

\begin{remark}
We emphasize that the only information used about the subgroups $K_{x,r+}$ is that they are normal and of finite codimension inside a parahoric subgroup. On the other hand, we do not know if the statement stays true for any ``compact open'' (e.g., finite codimension in $G(O)$) subgroup, the issue being that there is no longer any clear reason for $f$ to admit a continuous right adjoint.
\end{remark}

The full subcategory defined by $a_{x,r}$ can be alternatively characterized as follows.

\begin{corollary}\label{minimal}
The image of $a_{x,r}$ is the minimal (up to equivalence) full subcategory of $C$ that both contains $C^{K_{x,r+}}$ and inherits a $G((t))$ action from $C$.
\end{corollary}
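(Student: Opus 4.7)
The plan is to prove two inclusions: first, that the image of $a_{x,r}$ is itself a $G((t))$-stable full subcategory of $C$ containing $C^{K_{x,r+}}$; second, that any full subcategory with these two properties contains the image of $a_{x,r}$.

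For the first claim, fully faithfulness and $G((t))$-stability are immediate from the previous lemma. To see that the image contains $C^{K_{x,r+}}$ I would construct a canonical functor
$$C^{K_{x,r+}}\longrightarrow D(G((t))/K_{x,r+})\otimes_{D(K_{x,r+}\backslash G((t))/K_{x,r+})} C^{K_{x,r+}}$$
induced by tensoring with the ``unit point'' of the $G((t))$-$K_{x,r+}$-bimodule $D(G((t))/K_{x,r+})$ (the monoidal unit on the $K_{x,r+}$-side), and verify that its composition with $a_{x,r}$ recovers the tautological inclusion $C^{K_{x,r+}}\hookrightarrow C$.

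For the second claim, let $\iota\colon C'\hookrightarrow C$ be any fully faithful $G((t))$-equivariant embedding whose image contains $C^{K_{x,r+}}$. The first step is to show $(C')^{K_{x,r+}}=C^{K_{x,r+}}$. Applying $K_{x,r+}$-invariants (a limit) to $\iota$ gives a fully faithful $(C')^{K_{x,r+}}\hookrightarrow C^{K_{x,r+}}$. Conversely, the assumption provides a lift of the tautological map $C^{K_{x,r+}}\to C$ through $\iota$, and this lift is automatically $K_{x,r+}$-equivariant (the source carries the trivial action and $\iota$ is equivariant and fully faithful), so it factors through $(C')^{K_{x,r+}}$, yielding a two-sided inverse to the embedding above.

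The second step invokes the naturality of the construction $C\mapsto a_{x,r}$ in the $G((t))$-category argument: applied to the $G((t))$-functor $\iota$, it produces a commutative square
$$
\begin{tikzcd}
D(G((t))/K_{x,r+})\otimes_{D(K_{x,r+}\backslash G((t))/K_{x,r+})} (C')^{K_{x,r+}} \arrow[r, "a_{x,r}^{C'}"] \arrow[d, "\sim"'] & C' \arrow[d, "\iota", hook] \\
D(G((t))/K_{x,r+})\otimes_{D(K_{x,r+}\backslash G((t))/K_{x,r+})} C^{K_{x,r+}} \arrow[r, "a_{x,r}"] & C
\end{tikzcd}
$$
whose left vertical arrow is an equivalence by the previous step. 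Hence the image of $a_{x,r}$ factors through $\iota$, proving minimality. The only nontrivial point to verify is this naturality of $a_{x,r}$ in the $G((t))$-category variable, but this is transparent from the proof of the lemma: the entire construction of the adjoint pair lives in the category of $D(G((t)))$-bimodules before being tensored over $G((t))$ with the variable $C$.
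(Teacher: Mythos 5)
Your proof is correct and follows the same route as the paper: identify $(C')^{K_{x,r+}}$ with $C^{K_{x,r+}}$, then apply the lemma (equivalently, its naturality in the $G((t))$-category variable) to $C'$ to conclude. The paper simply dismisses the identification $D^{K_{x,r+}}\cong C^{K_{x,r+}}$ as holding "by assumption" and omits the verification that the image of $a_{x,r}$ is itself a valid candidate; your write-up supplies these details but adds no genuinely different idea.
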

\begin{proof}
Let $D$ be another full subcategory of $C$ satisfying our conditions. By assumption, we have $C^{K_{x,r+}}\cong D^{K_{x,r+}}.$ So by the previous lemma, we have a fully faithful embedding

\begin{align*}
D(G((t))/K_{x,r+})\otimes_{D(K_{x,r+}\backslash G((t))/K_{x,r+})} C^{K_{x,r+}}&\cong\\
D(G((t))/K_{x,r+})\otimes_{D(K_{x,r+}\backslash G((t))/K_{x,r+})} D^{K_{x,r+}}&\rightarrow D
\end{align*}

\noindent which proves the desired inclusion.
\end{proof}

\begin{definition}\label{depthdefinition}
Let $C$ be a category acted on by $G((t))$. The depth filtration on $C$ is the $\mathbb{R}_{\geq 0}$-indexed filtration with $C_{\leq r}$ the smallest full (dg-)subcategory of $C$ containing the image of $a_{x,r}$ for every $x\in\mathfrak{h}$.
\end{definition}

Let us quickly show that the depth filtration is exhaustive, i.e., we have that $\operatorname{colim}{C_{\leq r}}\cong C.$ For all $r$, we have $C^{K_{0,r+}}\subseteq C_{\leq r}.$ Thus, we have $\operatorname{colim}{C^{K_{0,r+}}}\subseteq\operatorname{colim}{C_{\leq r}}\subseteq C,$ but we have an equivalence $\operatorname{colim}{C^{K_{0,r+}}}\cong C$, and so the filtration must be exhaustive.

The following proposition, which comes from the theory of optimal points described in Section 6.1 of \cite{MP1}, ensures that this is a good definition:

\begin{lemma}\label{rationality}
\begin{enumerate}
\item There is a discrete set of rational numbers, depending on $G$, so that the depth filtration on any category with $G((t))$ action can jump only at those rational numbers.
\item For any $r$ and $C$, $C_{\leq r}$ is generated by the images of $a_{x,r}$ for finitely many values of $x$.
\end{enumerate}
\end{lemma}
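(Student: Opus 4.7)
The plan is to reduce both claims to a combinatorial analysis of the Moy-Prasad subgroups inside a compact fundamental domain for the affine Weyl group.

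First, I would observe that the image of $a_{x,r}$ depends only on the $G((t))$-conjugacy class of $K_{x,r+}$: for $g \in G((t))$ we have $g \cdot C^{K_{x,r+}} = C^{gK_{x,r+}g^{-1}}$, and the resulting $G((t))$-stable subcategories coincide by Corollary \ref{minimal}. The affine Weyl group acts on $X_*(T) \otimes \mathbb{R}$, and this action is implemented by conjugation by suitable elements of the normalizer of $T((t))$ in $G((t))$, under which $K_{wx,\,r+} = w K_{x,\,r+} w^{-1}$. Hence in Definition \ref{depthdefinition} we may restrict $x$ to the closed fundamental alcove $\overline{A}$, a compact rational simplex inside $X_*(T) \otimes \mathbb{R}$.

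For part (2), I would consider, for fixed $r$, the hyperplane arrangement $\{\langle\alpha, x\rangle + i = r\}$ indexed by roots $\alpha$ of $\mathfrak{g}$ and integers $i$. Since $\overline{A}$ is compact and this arrangement is locally finite, only finitely many hyperplanes meet $\overline{A}$, partitioning it into finitely many locally closed strata. Directly from the definition, $K_{x,r+}$ is constant on each open stratum, so only finitely many subgroups arise as $x$ ranges over $\overline{A}$. Choosing one representative $x_i$ per stratum gives the desired finite list.

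For part (1), I would work in the combined parameter space $\overline{A} \times \mathbb{R}_{\geq 0}$ equipped with the rational hyperplane arrangement $\{\langle\alpha, x\rangle + i = r\}$, now with both $x$ and $r$ variable. Inside any compact slab $\overline{A} \times [0, M]$ only finitely many hyperplanes contribute, yielding a finite decomposition into rational polyhedra $\sigma_1, \ldots, \sigma_N$. On each $\sigma_j$ the subgroup $K_{x,r+}$ is a fixed subgroup $K_j$, and the projection of $\sigma_j$ to the $r$-axis is an interval $I_j$ with rational endpoints. For $r$ in the complement of the finite set $\bigcup_j \partial I_j$ inside $[0, M]$, the collection of strata meeting the slice $\overline{A} \times \{r\}$ is locally constant; hence so is $\{K_{x,r+} : x \in \overline{A}\}$ as a set of subgroups of $G((t))$, and therefore so is $C_{\leq r}$. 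Letting $M \to \infty$ yields a discrete subset of $\mathbb{Q}_{\geq 0}$ depending only on $G$, outside which the depth filtration cannot jump.

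The main obstacle, in my view, is the reduction in the first paragraph: one must verify that the affine Weyl group action on $X_*(T) \otimes \mathbb{R}$ really is implemented by $G((t))$-conjugation on Moy-Prasad subgroups. This is classical Bruhat-Tits theory but sits at the interface between the combinatorial (apartment) and geometric (loop group) sides of the story, and licenses the reduction to the compact alcove $\overline{A}$. Once that is granted, the remainder is elementary finiteness of rational hyperplane arrangements on a compact polytope, together with the characterization of $C_{\leq r}$ via subgroup invariants afforded by Corollary \ref{minimal}.
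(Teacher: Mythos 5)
Your proposal is correct, but it uses heavier machinery than the paper does at two points, and the comparison is instructive. The paper reduces only to a compact fundamental domain for the \emph{translation} action of $X_*(T)$ on $X_*(T)\otimes\mathbb{R}$, which is implemented by the elementary computation $\operatorname{Ad}_{t^\lambda}(\mathfrak{g}_\alpha t^i)=\mathfrak{g}_\alpha t^{i+\langle\lambda,\alpha\rangle}$ and hence $\operatorname{Ad}_{t^\lambda}(K_{x,r+})=K_{x-\lambda,r+}$; it does \emph{not} invoke the full affine Weyl group, and indeed a footnote earlier in the section states that the author deliberately stays inside the apartment to avoid depending on Bruhat--Tits theory. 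Your reduction to the closed alcove is valid (the finite Weyl group part of the action is realized by conjugating by lifts $\dot w\in N_{G[[t]]}(T)$, giving $\mathfrak{k}_{wx,r+}=\operatorname{Ad}_{\dot w}\mathfrak{k}_{x,r+}$) and you correctly identify it as the step needing justification, but it buys you nothing here: a fundamental parallelepiped for the lattice action is already compact, which is all the subsequent finiteness argument uses. For part (1) your polyhedral-decomposition argument in $(x,r)$-space and the resulting local constancy of $\{K_{x,r+}:x\}$ off a finite set is a valid route; the paper's argument is slightly slicker, observing directly that for each subgroup $K$ the set $\{(x,r):K_{x,r+}=K\}$ is cut out by linear inequalities with rational coefficients, so the infimal $r$ at which $K$ first appears is rational, and jump points must be among these. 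Both give the same discreteness and rationality; yours requires a bit more care about the boundary behavior of the locally closed cells (the subgroup $K_{x,r+}$ is defined by strict inequalities, so is constant on locally closed strata rather than closed ones), which you elide, but the argument is sound.
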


\begin{proof}

For each element $\lambda\in X_*(T)$, we have a point $t^{\lambda}$ of $G((t))$. The adjoint action of $t^{\lambda}$ on $\mathfrak{g}((t))$ satisfies

$$\operatorname{Ad}_{t^{\lambda}}(\mathfrak{g}_{\alpha}t^i)=\mathfrak{g}_{\alpha}t^{i+\langle\lambda,\alpha\rangle}.$$

From this formula, we see that $\operatorname{Ad}_{t^{\lambda}}(\mathfrak{k}_{x,r+})=\mathfrak{k}_{x-\lambda,r+}$ and $\operatorname{Ad}_{t^{\lambda}}(K_{x,r+})=K_{x-\lambda,r+}$. We thus have a commutative diagram

\begin{equation*}
\begin{tikzcd}
D(G((t))/K_{x,r+})\otimes_{D(K_{x,r+}\backslash G((t))/K_{x,r+})} C^{K_{x,r+}}\arrow[r, "a_{x,r}"] \arrow[d] & C \arrow[d, "\operatorname{Id}"] \\
D(G((t))/K_{x-\lambda,r+})\otimes_{D(K_{x-\lambda,r+}\backslash G((t))/K_{x-\lambda,r+})} C^{K_{x-\lambda,r+}}\arrow[r, "a_{x-\lambda,r}"] & C
\end{tikzcd}
\end{equation*}

with the left arrow given by right multiplication by $t^{-\lambda}$ on the first factor and left multiplication by $t^{\lambda}$ on the second factor. As it is an equivalence, the maps $a_{x,r}$ and $a_{x-\lambda,r}$ have the same essential image, so it suffices to consider $x$ in a fundamental domain $S$ for the translation action of $X_*(T)$ on $X_*(T)\otimes\mathbb{R}$, which we can choose to be compact.

For any real number $n$, if we impose an upper bound $r\leq n$ then the space of choices for $r$ is now also compact. We see that only finitely many subgroups $K_{x,r+}$ appear for $x\in S$ and $r\leq n$, which implies the second part of our lemma. To deduce the first part, it suffices to show that for any subgroup $K$, the smallest $r$ for which it appears as a $K_{x,r+}$ is rational. But this is clear, as the space of such choices of $x,r$ is cut out by linear inequalities with rational coefficients.

\end{proof}

\begin{remark}
In fact, the rational numbers that appear must have denominator dividing a fundamental degree of $G$. See Proposition \ref{divisibility}.
\end{remark}

\begin{corollary}
The category $C_{\leq r}$ has a natural $G((t))$ action so that the embedding
$$a_r:C_{\leq r}\rightarrow C$$
is $G((t))$ equivariant. Furthermore, $a_r$ has a continuous $G((t))$-equivariant right adjoint $a_r^R$.
\end{corollary}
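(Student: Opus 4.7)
The plan is to use part 2 of Lemma \ref{rationality} to reduce to a finite direct sum, then apply standard image-factorization machinery. Pick finitely many points $x_1, \ldots, x_n \in X_*(T) \otimes \mathbb{R}$ so that $C_{\leq r}$ is generated by $\bigcup_i \operatorname{Im}(a_{x_i, r})$, and set
$$D \;:=\; \bigoplus_{i=1}^n D(G((t))/K_{x_i, r+}) \otimes_{D(K_{x_i, r+}\backslash G((t))/K_{x_i, r+})} C^{K_{x_i, r+}}.$$
The direct sum is taken in the $\infty$-category of $G((t))$-modules, and the map $\phi := \bigoplus_i a_{x_i, r}: D \to C$ is a $G((t))$-equivariant continuous functor with continuous $G((t))$-equivariant right adjoint $\phi^R$ assembled from the individual $a_{x_i, r}^R$, by the previous lemma applied summand by summand.

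Next, I would form the image factorization $D \twoheadrightarrow \overline{\phi(D)} \hookrightarrow C$ of $\phi$ in the $\infty$-category of cocomplete presentable categories and continuous functors, and identify the middle term with $C_{\leq r}$. Since each $a_{x_i, r}$ is fully faithful and continuous, each $\operatorname{Im}(a_{x_i, r})$ is automatically closed under colimits in $C$; hence the smallest full colimit-closed subcategory of $C$ containing $\bigcup_i \operatorname{Im}(a_{x_i, r})$ --- which by Definition \ref{depthdefinition} is $C_{\leq r}$ --- coincides with the coreflective hull $\overline{\phi(D)}$ of $\phi(D)$. This coreflective hull is presentable, being generated under colimits by a set of objects, and the fully faithful inclusion $a_r: C_{\leq r} \hookrightarrow C$ then admits a continuous right adjoint $a_r^R$ by the adjoint functor theorem.

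It remains to install a $G((t))$-action. I would realize $C_{\leq r}$ as the geometric realization of the bar complex attached to the comonad $\phi \phi^R$ acting on $C$; since $\phi$ and $\phi^R$ are both $G((t))$-equivariant, this simplicial diagram already lives in $G((t))\operatorname{-mod}$, and its colimit there endows $C_{\leq r}$ with a $G((t))$-action for which $a_r$ is equivariant. Equivariance of $a_r^R$ is then automatic, as right adjoints in $G((t))\operatorname{-mod}$ are computed in underlying presentable categories. The main obstacle I anticipate is exactly this last promotion: certifying that the image factorization, a priori only a statement about presentable categories, takes place at the enhanced level of strong $G((t))$-actions. All relevant maps and objects are manifestly equivariant, so it ought to reduce to a coherence check inside the formalism of \cite{Beraldo}, but this is the only step requiring genuine work as opposed to citation of general machinery.
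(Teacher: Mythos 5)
Your reduction to finitely many $x_i$ via part 2 of Lemma \ref{rationality} matches the paper, and the ambient-category argument (image factorization in $\mathrm{Pr}^L$, presentability of the coreflective hull, adjoint functor theorem for $a_r^R$) is sound. The difficulty, which you correctly flag, is promoting the image factorization to a colimit in $G((t))\rep$; but the device you propose for this step is not quite right as stated. A comonad $\phi\phi^R$ on $C$ gives a natural \emph{cosimplicial} resolution (counit supplies cofaces, comultiplication supplies codegeneracies), not a simplicial bar complex to geometrically realize; and even if you pass to the monad $\phi^R\phi$ on $D$ or to the \v{C}ech nerve of $\phi$, it is not formal that the resulting colimit in $\mathrm{Pr}^L$ is fully faithful into $C$ and agrees with the colimit-closure of the image. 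Some input beyond ``all maps in sight are equivariant'' is needed to make the simplicial diagram both compute $C_{\leq r}$ and live coherently in $G((t))\rep$.

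The paper supplies precisely this missing input, and takes a more hands-on route. The crucial observation, absent from your proposal, is that the right adjoints $a^{i,R}$ \emph{commute}: restricting $a^{i,R}$ to $C_j$ lands inside $C_i\cap C_j$. This follows because each $C_i$ is constructed functorially in the $G((t))$-category, so the $G((t))$-equivariant inclusion $C_j\hookrightarrow C$ carries $C_j$ into its own truncation, hence into $C_i\cap C_j$. With this in hand, the paper does not form a bar complex at all: it indexes the intersections $C^S=\bigcap_{i\in S}C_i$ over the poset $K$ of subsets of $\{1,\dots,k\}$, checks that each inclusion $a^S$ and its right adjoint $a^{S,R}$ (a composite of the commuting $a^{i,R}$) are $G((t))$-equivariant, and exhibits $C_{\leq r}$ as the colimit of the resulting cube-shaped diagram $K\to\mathrm{DGCat}$ --- a colimit that manifestly lives in $G((t))\rep$. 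The right adjoint $a_r^R$ then comes for free from passing to the diagram of right adjoints and invoking Corollary 5.5.3.4 of \cite{HTT} to turn the colimit into a limit. If you wanted to salvage a bar/\v{C}ech-nerve approach, you would still need some version of this commutativity to verify the relevant Beck--Chevalley conditions; identifying and proving that fact is the real content here.
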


\begin{proof}

By the previous lemma, we know that there are $G((t))$-categories $C_1,C_2,\cdots, C_k$, with fully faithful embeddings $a^i:C_i\subseteq C$, such that:

\begin{itemize}
\item The $C_i$ generate $C_{\leq r}$.
\item The $a^i$ are naturally $G((t))$-equivariant.
\item The $a^i$ have continuous right adjoints $a^{i,R}$ which are also naturally $G((t))$-equivariant.
\end{itemize}

Furthermore, we have

\begin{itemize}
\item The $a^{i,R}$ commute, i.e., the restriction of $a^{i,R}$ to $C_j$ lands in $C_i\cap C_j$. 
\end{itemize}

To see this, recall that each $C_i$ is constructed functorially in the $G((t))$-category $C$, and that the inclusion $C_j\rightarrow C$ is $G((t))$-equivariant.

We show that this is enough to conclude the desired properties of $C_{\leq r}$. For each nonempty subset $S$ of $\{1,2,\cdots,k\}$, define

$$C^S\cong\displaystyle\bigcap_{i\in S}C_i.$$

Denoting the inclusion $C^S\rightarrow C$ by $a^S$, we immediately see that there is a $G((t))$ action on $C^S$ making $a^S$ $G((t))$-equivariant. To see the same for the right adjoint $a^{S,R}$, note that it is a composition of the $a^{i,R}$ (using that they commute).

Let $K$ be the category of subsets of $\{1,2,\cdots,k\}$, with morphisms given by containment (so that the empty set is the final object.) We have a natural functor $K\rightarrow\operatorname{DGCat}$, given by sending nonempty subsets $S$ to $C^S$ and sending the empty set to $C_{\leq r}$. By definition, this is a colimit diagram --- this gives us the $G((t))$ structure on $C_{\leq r}$ and the equivariant structure on $a_r$.

To analyze $a_r^R$, consider the functor $K^{\operatorname{op}}\rightarrow\operatorname{DGCat}$ obtained by taking right adjoints. By Corollary 5.5.3.4 of \cite{HTT}, this is a limit diagram, i.e., we have an equivalence

$$C_{\leq r}\cong\displaystyle\lim_S C^S,$$

\noindent which gives us the desired statements on $a_r^R$.
\end{proof}

\subsection{Some auxiliary lemmas}

We now prove a number of basic statements which will be used later. We start with an orthogonality statement.

\begin{lemma}\label{orthogonal}
Let $C$ and $D$ be $G((t))$-categories. Assume that $C\cong C_{\leq r}$ and $D_{\leq r}\cong 0.$ Then,

$$C\otimes_{G((t))}D\cong 0.$$
\end{lemma}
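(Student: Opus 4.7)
The plan is to reduce to the vanishing of $\operatorname{Im}(a_{x,r}) \otimes_{G((t))} D$ for each of finitely many $x$, compute this tensor product using the defining lemma for $a_{x,r}$, and conclude by observing that $D^{K_{x,r+}} \cong 0$.

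First, by the second part of Lemma \ref{rationality} and the corollary that follows it, we choose finitely many points $x_1, \ldots, x_k$ such that $C \cong C_{\leq r}$ is realized as a colimit (in $G((t))$-equivariant $\operatorname{DGCat}$) of the intersections $C^S := \bigcap_{i \in S} \operatorname{Im}(a_{x_i,r})$ indexed by nonempty $S \subseteq \{1,\ldots, k\}$. The functor $- \otimes_{G((t))} D$ commutes with colimits; moreover, each inclusion $C^S \hookrightarrow \operatorname{Im}(a_{x_i,r})$ for $i \in S$ is fully faithful with continuous $G((t))$-equivariant right adjoint, and this fully faithful embedding will be preserved by $- \otimes_{G((t))} D$ (the unit of the adjunction remains an isomorphism after tensoring). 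Hence it suffices to show $\operatorname{Im}(a_{x,r}) \otimes_{G((t))} D \cong 0$ for each chosen $x$.

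Next, using the defining lemma of $a_{x,r}$, associativity of the relative tensor product, and the standard identification $D(G((t))/K_{x,r+}) \otimes_{G((t))} D \cong D^{K_{x,r+}}$ (valid since $K_{x,r+}$ is pro-unipotent, so invariants agree with coinvariants), we obtain
\[
\operatorname{Im}(a_{x,r}) \otimes_{G((t))} D \;\cong\; C^{K_{x,r+}} \otimes_{D(K_{x,r+}\backslash G((t))/K_{x,r+})} D^{K_{x,r+}}.
\]
We then conclude by Corollary \ref{minimal} applied to $D$: the invariants $D^{K_{x,r+}}$ sit inside the image of $a_{x,r}^D$, which is contained in $D_{\leq r} \cong 0$. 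Hence $D^{K_{x,r+}} \cong 0$ and the tensor product above vanishes.

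The main technical point is the middle step: moving $\otimes_{G((t))} D$ past the Hecke tensor $\otimes_{D(K_{x,r+}\backslash G((t))/K_{x,r+})}$ requires some care with bimodule structures, but this is formal once one has the projection-type formula for pro-unipotent groups. Everything else is a direct application of the depth filtration machinery and the BGO lemma already established in this section.
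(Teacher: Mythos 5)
Your proposal is correct and follows essentially the same approach as the paper: decompose $C$ as a colimit of the intersections $C^S$, reduce to a single $C_i = \operatorname{Im}(a_{x_i,r})$, expand via the BGO lemma and the identification $D(G((t))/K_{x,r+}) \otimes_{G((t))} D \cong D^{K_{x,r+}}$, and observe $D^{K_{x,r+}} \cong 0$. The only difference is that you make explicit (via preservation of fully faithful embeddings with continuous right adjoints under tensoring) the step from $C^S$ to $C_i$, which the paper states without elaboration.
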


\begin{proof}
Take $C_i$ as in the previous proofs. As $C$ can be written as a colimit of the $C^S$, we are reduced to showing the result for $C$ replaced with $C^S$, which in turns reduces to the case of $C\cong C_i$. So WLOG, we can assume $C\cong D(G((t))/K_{x,r+})\otimes_{D(K_{x,r+}\backslash G((t))/K_{x,r+})} C^{K_{x,r+}}$. But then we have

\begin{align*}
C\otimes_{G((t))}D&\cong (C^{K_{x,r+}}\otimes_{D(K_{x,r+}\backslash G((t))/K_{x,r+})} D(G((t))/K_{x,r+}))\otimes_{G((t))} D\\
&\cong C^{K_{x,r+}}\otimes_{D(K_{x,r+}\backslash G((t))/K_{x,r+})}D^{K_{x,r+}}\\
&\cong C^{K_{x,r+}}\otimes_{D(K_{x,r+}\backslash G((t))/K_{x,r+})}0\\
&\cong 0,
\end{align*}

\noindent as desired.
\end{proof}

Next, we have the following lemma which will be useful later for induction arguments.

\begin{lemma}\label{inductionaux}
Let $C$ be a $G((t))$-category. Denote by $b_r: C\rightarrow C_{=r}$ the quotient of the map $a_r:C_{< r}\rightarrow C_{\leq r}.$ Then $b_r^R$ is continuous and fully faithful and $C_{\leq r}$ is generated by the images of $a_r$ and $b_r^R$.
\end{lemma}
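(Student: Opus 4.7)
The plan is to realize the statement as the standard recollement attached to any fully faithful $G((t))$-equivariant embedding with a continuous $G((t))$-equivariant right adjoint in the category $\operatorname{Pr}^L$ of presentable stable categories and colimit-preserving functors. The first step is to observe that $a_r: C_{<r} \to C_{\leq r}$ is such an embedding: by Lemma \ref{rationality}, $C_{<r}$ is generated by the images of $a_{x,r'}$ for finitely many $r' < r$ once one restricts to $r'$ below any fixed bound, so the argument of the preceding corollary applies verbatim to produce a continuous $G((t))$-equivariant right adjoint $a_r^R$.

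Next, I would identify $C_{=r}$ concretely as the full subcategory $D := \{x \in C_{\leq r} : a_r^R x \cong 0\}$. Continuity of $a_r^R$ ensures that $D$ is closed under colimits in $C_{\leq r}$, so the inclusion $j: D \hookrightarrow C_{\leq r}$ is continuous. For any $x \in C_{\leq r}$, the counit $a_r a_r^R x \to x$ sits in a cofiber sequence whose third term $y$ satisfies $a_r^R y \cong 0$ (using $a_r^R a_r \cong \operatorname{id}$ from the full faithfulness of $a_r$), hence lies in $D$; the assignment $x \mapsto y$ is a functor left adjoint to $j$. By the universal property of pushouts in $\operatorname{Pr}^L$, this identifies $D \cong C_{\leq r}/C_{<r} =: C_{=r}$, with $j$ playing the role of $b_r^R$. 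Full faithfulness of $b_r^R$ is then the tautological fact that $D \hookrightarrow C_{\leq r}$ is a full subcategory inclusion, and continuity was verified above.

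The generation statement follows from the same cofiber sequence: every $x \in C_{\leq r}$ fits into a triangle $a_r a_r^R x \to x \to b_r^R b_r x$ whose outer terms lie in the images of $a_r$ and $b_r^R$ respectively, so these images generate $C_{\leq r}$ under colimits. The main subtlety I anticipate is carrying the $G((t))$-equivariance through the construction: quotients in $G((t))$-categories coincide with those in $\operatorname{Pr}^L$ (the forgetful functor preserves colimits, being a left adjoint), and the cofiber functor producing $b_r^R b_r$ is assembled from $G((t))$-equivariant pieces, so the equivariance of $b_r$ and $b_r^R$ should follow by the same kind of diagrammatic bookkeeping as in the preceding corollary. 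If any real difficulty arises, it will be at this last equivariance check rather than in the abstract recollement.
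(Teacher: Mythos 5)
Your proposal is correct and takes essentially the same approach as the paper: both identify $C_{=r}$ with the kernel of $a_r^R$ inside $C_{\leq r}$ (the paper cites HTT 5.5.3.4 directly for this recollement, while you re-derive it by hand) and both read off generation from the cofiber sequence $a_r a_r^R X \to X \to b_r^R b_r X$. Your closing paragraph on $G((t))$-equivariance is correct but not required for the lemma as stated, which only asserts continuity and full faithfulness of $b_r^R$.
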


\begin{proof}
By 5.5.3.4 of \cite{HTT}, the cokernel (in the category of presentable categories) of $a_r$ is equivalent to the kernel of $a_r^R$, and under this equivalence the map $b_r^R$ is sent to the structure map of the kernel, which shows that it is continuous and fully faithful.

To see the generation statement, let $X$ be an object of $C$. Denote the cone of the counit map $a_ra_r^RX\rightarrow X$ by $X'.$ Now note that $a_r^RX'$ is the cone of $a_r^Ra_ra_r^RX\cong a_r^RX\rightarrow a_r^RX,$ and hence trivial. Therefore $X'$ lies in the image of $b_r^R$, and in fact $X'\cong b_r^Rb_rX'\cong b_r^Rb_rX$, so we have an exact triangle

$$a_ra_r^RX\rightarrow X\rightarrow b_r^Rb_r X,$$

\noindent which shows the generation statement.
\end{proof}

Finally, we prove a statement about the stability of depth under subquotients.

\begin{lemma}\label{subquotient}
Let $f:C\rightarrow D$ be a map of $G((t))$-categories.
\begin{enumerate}
\item If $D\cong D_{\leq r}$ and $f$ is conservative, then $C\cong C_{\leq r}$.
\item If $C\cong C_{\leq r}$ and the essential image of $f$ generates $D$ under colimits, then $D\cong D_{\leq r}$.
\end{enumerate}
\end{lemma}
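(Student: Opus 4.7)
For Part 2, the argument is essentially formal. By Corollary \ref{minimal}, $C_{\leq r}$ is the smallest $G((t))$-stable full subcategory of $C$ containing each $C^{K_{x,r+}}$, and similarly for $D_{\leq r}$. Continuity and $G((t))$-equivariance of $f$ ensure that $f$ carries $C^{K_{x,r+}}$ into $D^{K_{x,r+}}\subseteq D_{\leq r}$ and respects the closure operations, so $f(C_{\leq r})\subseteq D_{\leq r}$. Under the hypothesis $C=C_{\leq r}$ this gives $f(C)\subseteq D_{\leq r}$; since the continuous fully faithful inclusion $a_r^D$ makes $D_{\leq r}$ closed under colimits in $D$, and $f(C)$ generates $D$ under colimits, we conclude $D=D_{\leq r}$.

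Part 1 is more substantive. By Lemma \ref{inductionaux}, it suffices to show that every $X\in C$ with $a_r^R X=0$ vanishes. The first step is that such an $X$ has $X^{K_{x,r+}}=0$ for every $x$: for any $Z\in C^{K_{x,r+}}$, the image $\operatorname{oblv}(Z)$ lies in $C_{\leq r}$ by definition of the depth filtration, so adjunction computes
$$\Hom_{C^{K_{x,r+}}}(Z,X^{K_{x,r+}})\cong\Hom_C(\operatorname{oblv}(Z), X)\cong\Hom_{C_{\leq r}}(\operatorname{oblv}(Z), a_r^R X)=0,$$
forcing $X^{K_{x,r+}}=0$. Since $a_r^R$ is $G((t))$-equivariant, the same conclusion holds for $g^{-1}X$ in place of $X$ for any $g\in G((t))$.

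The second step transfers this vanishing across $f$. Since each $K_{x,r+}$ is pro-unipotent, invariants coincide with coinvariants and are computed by a colimit-type construction, so the continuous $G((t))$-equivariant functor $f$ commutes with $(-)^{K_{x,r+}}$. Combined with equivariance, $f(g^{-1}X)^{K_{x,r+}}\cong f((g^{-1}X)^{K_{x,r+}})=0$ for all $g$ and $x$. The hypothesis $D=D_{\leq r}$ now enters: by Corollary \ref{minimal}, $D$ is the $G((t))$-stable, colimit-closed subcategory generated by the $\operatorname{oblv}(D^{K_{x,r+}})$, so for any $W\in D^{K_{x,r+}}$ and any $g\in G((t))$, adjunction yields
$$\Hom_D(g\cdot\operatorname{oblv}(W), f(X))\cong\Hom(W, f(g^{-1}X)^{K_{x,r+}})=0.$$
Passing to colimits, $f(X)$ is right-orthogonal to all of $D$, so $f(X)=0$; by conservativity of $f$ we conclude $X=0$.

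The main obstacle is the final step: one must make the ``apartment-level'' vanishing of $f(g^{-1}X)^{K_{x,r+}}$ for all $g$ and $x$ propagate through the generation of $D=D_{\leq r}$ under $G((t))$-translation and colimits, and in particular verify that the closure under $G((t))$-action is captured by translating by individual $g$'s (so that the adjunction computation above really reaches all generators). A minor technical subpoint is confirming that $f$ commutes with pro-unipotent invariants in the $G((t))$-equivariant setting; this is a standard consequence of continuity plus $D(G((t)))$-linearity but worth stating explicitly in the proof.
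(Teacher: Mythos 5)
Your Part~2 argument is sound and is essentially the paper's, phrased via functoriality of the depth filtration rather than via an explicit commutative square with the $a_r$'s; no issues there.

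For Part~1 you take a genuinely different route, and there is a real gap that you correctly flag. The paper's proof is a short diagram chase: functoriality of $a_r^R$ gives a commuting square $f_{\leq r}\circ a_r^R = a_r^R\circ f$; the hypothesis $D\cong D_{\leq r}$ makes $a_r^R:D\to D_{\leq r}$ an equivalence, so conservativity of $f$ forces conservativity of $a_r^R:C\to C_{\leq r}$, which combined with full faithfulness of $a_r$ yields $C\cong C_{\leq r}$. Your approach instead reduces (via Lemma~\ref{inductionaux}) to killing every $X\in C$ with $a_r^R X=0$, shows $\operatorname{Av}_*^{K_{x,r+}}(g^{-1}X)=0$ for all $g,x$, transfers this across $f$, and then wants $f(X)=0$ by testing against the translates $g\cdot\operatorname{oblv}(W)$. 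The gap is the final step: you would need that $D=D_{\leq r}$ is generated under colimits by objects $g\cdot\operatorname{oblv}(W)$ with $g\in G((t))$, $W\in D^{K_{x,r+}}$, $x$ arbitrary. By Definition~\ref{depthdefinition}, $D_{\leq r}$ is generated by the images of the $a_{x,r}$, i.e.\ by the relative tensor products $D(G((t))/K_{x,r+})\otimes_{D(K_{x,r+}\backslash G((t))/K_{x,r+})} D^{K_{x,r+}}$, and reducing these to colimits of group translates requires an additional argument (for instance, that delta D-modules at $k$-points generate $D(G((t))/K_{x,r+})$ under colimits and that this survives the relative tensor product), which neither the paper nor your proposal supplies. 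The paper's diagram chase avoids ever decomposing $D_{\leq r}$ into generators and is the preferable route; if you wish to pursue the object-level argument, the generation statement must be isolated and proved as a separate lemma.
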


\begin{proof}
First assume that $D\cong D_{\leq r}$ and $f$ is conservative. We have a commutative diagram

\begin{equation*}
\begin{tikzcd}
C_{\leq r} \arrow[d, "f_{\leq r}"] & C \arrow[d, "f"] \arrow[l,"a_r^R"] \\
D_{\leq r} & D \arrow[l, "a_r^R"].
\end{tikzcd}
\end{equation*}

By our assumptions, the composition $C\rightarrow D\rightarrow D_{\leq r}$ is conservative, so the composition $C\rightarrow C_{\leq r}\rightarrow D_{\leq r}$ is conservative. This implies that $C\rightarrow C_{\leq r}$ is conservative, which, combined with fully faithfulness of $a_r$, shows that $C\cong C_{\leq r}$.

Assume on the other hand that $C\cong C_{\leq r}$ and that the essential image of $f$ generates $D$ under colimits. This time we use the commutative diagram

\begin{equation*}
\begin{tikzcd}
C_{\leq r} \arrow[d, "f_{\leq r}"] \arrow[r, "a_r"] & C \arrow[d, "f"] \\
D_{\leq r} \arrow[r, "a_r"] & D.
\end{tikzcd}
\end{equation*}
\end{proof}

Similar logic to the previous case shows that the image of $D_{\leq r}\rightarrow D$ must generate $D$ under colimits, but the properties of $a_r$ imply that we must then have $D\cong D_{\leq r}$.

\section{Moy-Prasad generators}

\subsection{Statement of the main theorem}

Let $G((t))\operatorname{-rep}$ be the category of categories with a strong $G((t))$ action. This is naturally an $(\infty,2)$-category, in fact a category enriched in dg-categories. The notion of an enriched ${\infty}$-category is developed in \cite{enriched} --- however, for the purposes of this paper, all our examples will be built from the $\infty$-category of dg-categories and can be dealt with in an ad hoc way.

We define $G((t))\operatorname{-rep}_{\leq r}$ and $G((t))\operatorname{-rep}_{\geq r}$ to be the full subcategories of $G((t))\operatorname{-rep}$ defined by $C_{\leq r}\cong C$ and $C_{< r}\cong 0$, respectively. Their intersection will be denoted $G((t))\operatorname{-rep}_{=r}$. As the constructions of the previous section are functorial, we have truncation functors $\tau_{\leq r}:G((t))\rep\rightarrow G((t))\rep_{\leq r}$ and $\tau_{<r}:G((t))\rep\rightarrow G((t))\rep_{<r}.$ Taking quotients, we also produce truncation functors $\tau_{>r}:G((t))\rep\rightarrow G((t))\rep_{>r}$ and $\tau_{\geq r}:G((t))\rep\rightarrow G((t))\rep_{\geq r}.$ Our goal is to produce a set of generators of $G((t))\rep_{=r}$ in the following sense:

\begin{definition}
Let $C$ be a category enriched over the category of dg-categories. Say that a set $S$ of objects of $C$  generates $C$ if, for any non-final object $c\in C$, there is some $s\in S$ such that $\operatorname{Hom}(s,c)$ is not the trivial dg-category.
\end{definition}

The key computation is that of $\tau_{\geq r}D(G((t))/K_{x,r+}).$ We shall assume $r>0$ for the rest of the section, as the other case is trivial. Let us introduce some notation to state the result.

As $K_{x,r+}$ is normal inside $K_{x,r}$, we have a right action of $K_{x,r}/K_{x,r+}$ on $D(G((t))/K_{x,r+})$. Because $r>0$, then the group $K_{x,r}/K_{x,r+}$ is in fact naturally isomorphic to the additive group on a vector space. The Fourier transform of \cite{Beraldo} then identifies the convolution monoidal structure on $D(K_{x,r}/K_{x,r+})$ with the $!$-tensor monoidal structure on $D((K_{x,r}/K_{x,r+})^*)$.

As briefly mentioned in the previous section, we have an action of $L_x$ on $K_{x,r}/K_{x,r+}$, and hence on $(K_{x,r}/K_{x,r+})^*$. Let $(K_{x,r}/K_{x,r+})^{*,\circ}$ denote the locus of GIT-semistable \cite{GIT} elements, i.e., elements whose $L_x$ orbit does not contain zero in its closure. Then we can define 

$$D(G((t))/K_{x,r+})^{\circ}\cong D(G((t))/K_{x,r+})\otimes_{D((K_{x,r}/K_{x,r+})^*)}D((K_{x,r}/K_{x,r+})^{*,\circ}).$$

The main result of this section is the following. It is a categorification of the theory of unrefined minimal $K$-types developed in \cite{MP1}, and its proof is based on the same ideas.

\begin{theorem}\label{generation}
$$\tau_{\geq r}D(G((t))/K_{x,r+})\cong D(G((t))/K_{x,r+})^{\circ}.$$
\end{theorem}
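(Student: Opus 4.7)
The plan is to identify both sides as quotients of $C:=D(G((t))/K_{x,r+})$ with the same kernel. On the one hand, $\tau_{\geq r}C$ is by definition $C/C_{<r}$. On the other hand, the right action of the vector group $V:=K_{x,r}/K_{x,r+}$ (which is additive since $r>0$) on $C$ commutes with the left $G((t))$-action, and Fourier transform identifies the resulting $D(V)$-module structure with a $D(V^*)$-module structure. The open-closed decomposition $V^{*,\circ}\hookrightarrow V^*$ yields a $G((t))$-equivariant recollement
$$C^{un}\longrightarrow C\longrightarrow C^\circ,$$
and $C\to C^\circ$ is a quotient with kernel $C^{un}$. The theorem therefore reduces to the equality of subcategories $C_{<r}=C^{un}$ inside $C$, which I would split into two inclusions: (A) $C^\circ\in G((t))\rep_{\geq r}$ (equivalently $C_{<r}\subseteq C^{un}$), and (B) $C^{un}\subseteq C_{<r}$.

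For (A) it suffices to show $(C^\circ)^{K_{x',r'+}}\cong 0$ for every $x'$ and every $r'<r$. Commutativity of the right $V$-action with the left $K_{x',r'+}$-action gives a natural identification $(C^\circ)^{K_{x',r'+}}\cong D(K_{x',r'+}\backslash G((t))/K_{x,r+})^\circ$. Stratifying by double cosets $K_{x',r'+}\cdot g\cdot K_{x,r+}$, the $V$-characters supported on each double coset are exactly those in $V_g^\perp$, where $V_g\subseteq V$ is the image of $g^{-1}K_{x',r'+}g\cap K_{x,r}$. The task then reduces to showing $V_g^\perp\subseteq V^{*,un}$ for every $g$: because $r'<r$, an apartment-theoretic comparison of the Moy-Prasad filtrations at $x$ and at $g^{-1}x'$ produces a cocharacter of $L_x$ under which every element of $V_g^\perp$ has strictly positive weights, destabilizing the whole subspace.

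For (B) I would use the Hilbert-Mumford/Kempf stratification of $V^{*,un}$ by optimal destabilizing cocharacters $\lambda$ of $L_x$ and induct along the closure order of strata, so that it suffices to place the subcategory of $C$ supported on each open stratum inside $C_{<r}$. Every character in the $\lambda$-stratum vanishes on the $\lambda$-non-negative weight subspace $V_\lambda\subseteq V$, which lifts to an $L_x$-stable sub-Lie-algebra of $\mathfrak{k}_{x,r}$ and exponentiates to an enlargement $K'_\lambda$ of $K_{x,r+}$. The key apartment calculation, after $P_x$-conjugating $\lambda$ into $X_*(T)$, identifies $K'_\lambda=K_{x,r}\cap K_{y,r'+}$ with $y:=x+\epsilon\lambda$ and $r':=r-\delta$ for any $0<\delta<\epsilon$ with $\epsilon$ sufficiently small. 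The Moy-Prasad refinement argument of \cite{MP1} then produces, from any object supported on the $\lambda$-stratum, a nontrivial $K_{y,r'+}$-invariant object generating it under the $G((t))$-action, placing the whole stratum inside the image of $a_{y,r'}$ and hence inside $C_{<r}$.

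The main obstacle is the last step of (B): converting the apartment identification of $K'_\lambda$ into an actual inclusion into the essential image of $a_{y,r'}$. Because $a_{y,r'}$ is constructed from left $K_{y,r'+}$-invariants while the Fourier decomposition controls the right $V$-action, one must translate carefully between the two sides, reproducing categorically the refinement procedure that Moy-Prasad use to pass from a degenerate K-type to a K-type of lower depth. Once that dictionary is in place, both (A) and (B) amount to the apartment bookkeeping of \cite{MP1}, combined with the formal generation properties of the depth filtration established in Section 2.
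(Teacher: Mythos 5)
Your reduction is exactly the paper's: via Fourier transform of the right $V:=K_{x,r}/K_{x,r+}$-action and the open--closed decomposition $V^{*,\circ}\hookrightarrow V^*\hookleftarrow V^{*,un}$, the theorem becomes the equality of full subcategories $C_{<r}=C^{un}$ inside $C=D(G((t))/K_{x,r+})$, proved by two inclusions. Your inclusion (A) is also essentially the paper's argument: stratify $K_{x',r'+}\backslash G((t))/K_{x,r+}$ by Bruhat cells, observe that on each cell the right $V$-action factors through a quotient with kernel a lift of $V_g$ (so Fourier support lands in $V_g^{\perp}$), and then produce a cocharacter of $T$ contracting $V_g^{\perp}$ to $0$. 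This is precisely the half of Lemma~\ref{input} asserting $L_x\cdot Z\subseteq(K_{x,r}/K_{x,r+})^*_{us}$; the paper just packages it through a fibration over $L_x$ and Corollary~8.3.4 of \cite{DrGa} to handle the whole $D$-module category at once rather than cell by cell.

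For inclusion (B) your approach diverges from the paper's, and the gap you flag at the end is real and not easily repaired in the form you propose. Your Kempf-stratification setup correctly identifies the relevant subgroups --- your $K'_\lambda=K_{x,r}\cap K_{y,r'+}$ is the same as the paper's $K_{y,r}$ for $y=x+\epsilon\lambda$, and the apartment calculation checks out --- but the conclusion you want, ``the Moy--Prasad refinement argument produces a nontrivial $K_{y,r'+}$-invariant object,'' does not have an obvious categorical meaning: the refinement procedure in \cite{MP1} manufactures $K$-types inside an \emph{irreducible} $p$-adic representation, and there is no analogue that lets you take an arbitrary object of $C^{un}$ supported on a Kempf stratum and produce from it a left $K_{y,r'+}$-invariant object. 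The paper sidesteps this entirely by inverting the logic. Rather than extracting lower-depth invariants from objects of $C^{un}$, it writes down an a priori low-depth source category, namely $\bigcup_y D(G((t))/K_{y,r})\otimes D(K_{y,r}\backslash P_x)$ (low depth because $K_{y,r}=K_{y,s+}$ for some $s<r$), together with a $G((t))$-equivariant convolution map into $C$. Under Fourier transform this map becomes pushforward along
$$\pi:\bigcup_y\bigl(K_{y,r}^{\perp}\times(K_{y,r}\backslash P_x)\bigr)\rightarrow(K_{x,r}/K_{x,r+})^*_{us},$$
which is surjective precisely because $L_x\cdot Z'=(K_{x,r}/K_{x,r+})^*_{us}$ (the other half of Lemma~\ref{input}, proved by Hilbert--Mumford plus conjugation into $T$). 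Lemma~\ref{appendixgen} then gives that $\pi_*$ has generating image, and Lemma~\ref{subquotient} transfers the depth bound. The point is that the depth estimate is established on the \emph{source} of the convolution map, where it is formal, and transported to $C^{un}$ by surjectivity --- no object-by-object refinement is needed, and the left/right mismatch you worry about never arises because the $P_x$-factor in the convolution does the translation for you. Your Kempf stratification is thus a finer decomposition than is needed; the paper only uses the coarser statement that the finitely many $L_x$-orbit closures $L_x\cdot K_{y,r}^{\perp}$ cover the unstable locus, which is exactly what makes a single surjective pushforward suffice.
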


\begin{example}
Continuing Example \ref{congruence}, consider the case of $x=0$ and $r$ an integer. The space $K_{x,r}/K_{x,r+}$ can be identified with the Lie algebra $\mathfrak{g}$ of $G$, and we are interested in the semistable locus of $\mathfrak{g}^*$ under the coadjoint action. Using the Chevalley isomorphism, this can be computed to be the complement of the nilpotent cone.
\end{example}

Let us explain how to derive a generation statement from Theorem \ref{generation}. As $D((K_{x,r}/K_{x,r+})^{*,\circ})$ is self-dual as a $D((K_{x,r}/K_{x,r+})^*)$-module, we have

\[\Hom_{G((t))}(D(G((t))/K_{x,r+})^{\circ},C)\cong D(G((t))/K_{x,r+})^{\circ}\otimes_{G((t))}C.\]

To simplify our notation, we will denote these equivalent categories by $C^{K_{x,r+},\circ}.$

\begin{corollary}
Let $C$ be a nontrivial category in $G((t))\operatorname{-rep}_{=r}.$ Then there is some $x\in X_*(T)\otimes\mathbb{R}$ with $C^{K_{x,r+},\circ}\not\cong 0$. Equivalently, the $D(G((t))/K_{x,r+})^{\circ}$ form a set of generators for $G((t))\operatorname{-rep}_{=r}.$
\end{corollary}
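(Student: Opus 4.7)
The plan is to reduce the corollary to Theorem \ref{generation} via a short adjunction argument. First, I would use the self-duality of $D((K_{x,r}/K_{x,r+})^{*,\circ})$ as a module over $D((K_{x,r}/K_{x,r+})^*)$ noted just before Theorem \ref{generation} to see that $D(G((t))/K_{x,r+})^{\circ}$ is self-dual as a $G((t))$-category. This identifies
\[
C^{K_{x,r+},\circ}\;\cong\;D(G((t))/K_{x,r+})^{\circ}\otimes_{G((t))}C\;\cong\;\Hom_{G((t))}(D(G((t))/K_{x,r+})^{\circ},C),
\]
so the corollary is equivalent to showing that this enriched Hom is nonzero for some $x$.

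Next, I would invoke Theorem \ref{generation} to rewrite $D(G((t))/K_{x,r+})^{\circ}$ as $\tau_{\geq r}D(G((t))/K_{x,r+})$. Since $C\in G((t))\rep_{=r}\subseteq G((t))\rep_{\geq r}$, and since $\tau_{\geq r}$ is (by its description as a quotient in the discussion after Definition \ref{depthdefinition}) the left adjoint to the fully faithful inclusion $G((t))\rep_{\geq r}\hookrightarrow G((t))\rep$, the enriched adjunction collapses this to
\[
\Hom_{G((t))}(\tau_{\geq r}D(G((t))/K_{x,r+}),C)\;\cong\;\Hom_{G((t))}(D(G((t))/K_{x,r+}),C)\;\cong\;C^{K_{x,r+}}.
\]
In summary, for $C\in G((t))\rep_{=r}$ there is a canonical equivalence $C^{K_{x,r+},\circ}\cong C^{K_{x,r+}}$.

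To conclude, I would apply Definition \ref{depthdefinition}: since $C\cong C_{\leq r}$ is nontrivial and $C_{\leq r}$ is, by construction, generated by the images of the functors $a_{x,r}$, at least one of these images must be nonzero. Each $a_{x,r}$ is a functor whose domain is a tensor product having $C^{K_{x,r+}}$ as a factor, so this forces $C^{K_{x,r+}}\not\cong 0$ for some $x$, and hence $C^{K_{x,r+},\circ}\not\cong 0$ for the same $x$ by the previous step. The reformulation in terms of a set of generators is then immediate from the definition of generation for a category enriched in dg-categories.

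I expect the main obstacle to be technical bookkeeping rather than any new idea: one has to justify that $\tau_{\geq r}$ really is the left adjoint of $G((t))\rep_{\geq r}\hookrightarrow G((t))\rep$ at the level of enriched $\Hom$, and that the self-duality of $D(G((t))/K_{x,r+})^{\circ}$ identifies tensor product over $G((t))$ with inner $\Hom$ in the $(\infty,2)$-category $G((t))\rep$. The former should reduce to the observation that any $G((t))$-equivariant functor $C\to D$ sends $C_{<r}$ into $D_{<r}$ by functoriality of the construction of Section 2. Once these formal compatibilities are in place, the corollary follows with essentially no further work; all the geometric content sits in Theorem \ref{generation}.
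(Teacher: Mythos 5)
Your proposal is correct, and it lands on the same intermediate conclusion as the paper, namely $C^{K_{x,r+},\circ}\cong C^{K_{x,r+}}$ for $C$ purely of depth $r$, after which both arguments finish identically. The difference is that you argue on the $\Hom$-side where the paper argues on the tensor-side. The paper computes $D(G((t))/K_{x,r+})^{\circ}\otimes_{G((t))}C$ directly: it rewrites $D(G((t))/K_{x,r+})^{\circ}$ as $\operatorname{quot}\bigl(\tau_{<r}D(G((t))/K_{x,r+})\rightarrow D(G((t))/K_{x,r+})\bigr)$ via Theorem \ref{generation}, commutes $C\otimes_{G((t))}(-)$ with the quotient, and kills the term $C\otimes_{G((t))}\tau_{<r}D(G((t))/K_{x,r+})$ using Lemma \ref{orthogonal}. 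You instead pass to $\Hom_{G((t))}(D(G((t))/K_{x,r+})^{\circ},C)$ and invoke the reflective adjunction $\tau_{\geq r}\dashv\iota$; but verifying that $\tau_{\geq r}$ is the left adjoint of the inclusion of $G((t))\rep_{\geq r}$ is exactly the $\Hom$-dual of Lemma \ref{orthogonal}: one must show $\Hom_{G((t))}(D',C)\cong 0$ whenever $D'$ is of depth $<r$ and $C_{<r}\cong 0$. Your justification via functoriality of the depth filtration does the job: a $G((t))$-equivariant functor $F:D'\rightarrow C$ fits into the square $F\circ a_{r'}\simeq a_{r'}\circ F_{\leq r'}$ for some $r'<r$ with $D'_{\leq r'}\cong D'$, so $F$ factors through $C_{\leq r'}\cong 0$, and hence the enriched $\Hom$ is the trivial dg-category. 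Both routes carry the same geometric content (Theorem \ref{generation} plus an orthogonality statement); the paper's is marginally shorter because Lemma \ref{orthogonal} is already available on the $\otimes$-side, while yours makes explicit the pleasant structural fact that $G((t))\rep_{\geq r}\hookrightarrow G((t))\rep$ is a reflective localization with reflector $\tau_{\geq r}$, at the cost of having to redo orthogonality on the opposite side.
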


\begin{proof}
As $C_{\leq r}\cong C$ is nontrivial, there must be some $x$ such that $C^{K_{x,r+}}\not\cong 0$. Now note that

\begin{align*}
C^{K_{x,r+},\circ}&\cong C\otimes_{G((t))}D(G((t))/K_{x,r+})^{\circ}\\
&\cong C\otimes_{G((t))}(\operatorname{quot}\tau_{<n}D(G((t))/K_{x,r+})\rightarrow D(G((t))/K_{x,r+}))\\
&\cong\operatorname{quot} C\otimes_{G((t))}\tau_{<n}D(G((t))/K_{x,r+})\rightarrow C^{K_{x,r+}}\\
&\cong\operatorname{quot} 0\rightarrow C^{K_{x,r+}}\\
&\cong C^{K_{x,r+}}\not\cong 0,
\end{align*}

\noindent where we used Lemma \ref{orthogonal} in the second to last line.
\end{proof}

Ranging over all $r$, we get:

\begin{corollary}
The $D(G((t))/K_{x,r+})^{\circ}$, as $(x,r)$ ranges over all pairs in $(X_*(T)\otimes\mathbb{R})\times\mathbb{R}_{>0},$ form a system of generators of $G((t))\operatorname{-rep}$.
\end{corollary}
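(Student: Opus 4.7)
The plan is to reduce, via the depth filtration, to the case of a nonzero category of pure positive depth, where the previous corollary applies verbatim.

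Let $C$ be a nontrivial object of $G((t))\rep$. By exhaustiveness of the depth filtration (observed after Definition \ref{depthdefinition}) together with the discreteness of its jumps in Lemma \ref{rationality}, there is a smallest rational $r_0 \geq 0$ with $C_{\leq r_0} \not\cong 0$, and then $C_{<r_0} \cong 0$. Lemma \ref{inductionaux} says $C_{\leq r_0}$ is generated by the images of $a_{r_0}\colon C_{<r_0} \to C_{\leq r_0}$ and the fully faithful $b_{r_0}^R\colon C_{=r_0} \hookrightarrow C_{\leq r_0}$; the first vanishes, so $b_{r_0}^R$ is an equivalence $C_{=r_0} \cong C_{\leq r_0}$ and in particular $C_{=r_0}$ is a nonzero object of $G((t))\rep_{=r_0}$.

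Assuming $r_0 > 0$, the previous corollary supplies an $x \in X_*(T)\otimes\mathbb{R}$ such that $D \otimes_{G((t))} C_{=r_0} \not\cong 0$, where $D := D(G((t))/K_{x,r_0+})^{\circ}$. To promote this to non-vanishing of $\Hom_{G((t))}(D, C) \cong D \otimes_{G((t))} C$ (via the self-duality identification used in the previous corollary), my next step is to verify that $D$ is \emph{purely} of depth $r_0$. The inclusion $D \in \rep_{\geq r_0}$ is Theorem \ref{generation}. For the reverse inclusion, $D(G((t))/K_{x,r_0+})$ lies in $\rep_{\leq r_0}$ because it is generated as a $G((t))$-category by its $K_{x,r_0+}$-invariants (Corollary \ref{minimal}), and then Lemma \ref{subquotient}(2) passes this to the quotient $D$. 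Once $D \in \rep_{=r_0}$ is established, Lemma \ref{orthogonal} forces $D \otimes_{G((t))} \tau_{>r_0}C \cong 0$, so the cofiber sequence $C_{\leq r_0} \to C \to \tau_{>r_0}C$ collapses on tensoring with $D$ to give $D \otimes_{G((t))} C \cong D \otimes_{G((t))} C_{\leq r_0} \cong D \otimes_{G((t))} C_{=r_0} \not\cong 0$.

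I expect the main obstacle to be the case $r_0 = 0$: by the very orthogonality argument above, any pure-depth-$r$ generator with $r > 0$ tensors to zero against a $C$ concentrated at depth $0$, so a genuinely tame category cannot be detected by the index set $\mathbb{R}_{>0}$ alone. I would address this by augmenting the list with generators at $r=0$ built from the reductive quotient $L_x \cong K_{x,0}/K_{x,0+}$ --- the Fourier/semistability picture of Theorem \ref{generation} must here be replaced by an analogue on $L_x$ itself (for instance, a category attached to the semistable locus of $L_x$-representations) --- after which the reduction above applies uniformly in $r_0 \geq 0$.
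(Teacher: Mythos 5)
Your argument tracks the paper's own proof closely: both reduce to the smallest $r_0$ at which $C_{\leq r_0}$ becomes nonzero (using exhaustiveness plus discreteness of jumps), observe that $C_{\leq r_0}$ is then a nonzero object of $G((t))\rep_{=r_0}$, and apply the previous corollary. The paper phrases this in the contrapositive, but the content is the same.

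Your ``promotion'' step --- showing $D := D(G((t))/K_{x,r_0+})^{\circ}$ lies in $\rep_{=r_0}$ and invoking Lemma \ref{orthogonal} on the cofiber sequence $C_{\leq r_0}\to C\to\tau_{>r_0}C$ --- is correct (the inclusion $D\in\rep_{\leq r_0}$ does indeed follow from Corollary \ref{minimal} and Lemma \ref{subquotient}(2)), but it is more work than needed. Since $a_{r_0}\colon C_{\leq r_0}\to C$ is fully faithful with a continuous $G((t))$-equivariant right adjoint, the functor $-\otimes_{G((t))}D$ sends it to a fully faithful functor $(C_{\leq r_0})^{K_{x,r_0+},\circ}\to C^{K_{x,r_0+},\circ}$. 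Nonvanishing of the source therefore already gives nonvanishing of the target, with no need for the orthogonality computation or the pure-depth verification for $D$.

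You are right to flag the case $r_0 = 0$: with the index set $\mathbb{R}_{>0}$ as written, the corollary literally fails for categories concentrated in depth $0$, by exactly the orthogonality reasoning you give. The paper's own proof implicitly applies the previous corollary at $r=0$ as well, so the intended index set is $\mathbb{R}_{\geq 0}$. However, your proposed fix is over-engineered. There is no new semistability analysis needed on $L_x$: since $C_{<0}\cong 0$ for every $C$ by the $\mathbb{R}_{\geq 0}$-indexing of the depth filtration, one has $\tau_{\geq 0}D(G((t))/K_{x,0+})\cong D(G((t))/K_{x,0+})$ tautologically, and the paper's convention (used explicitly in Section 4) is $D(G((t))/K_{x,0+})^{\circ}:=D(G((t))/K_{x,0+})$. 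With that convention, the previous corollary already holds for $r=0$ --- it is just the defining property of the depth filtration, that a nonzero depth-$0$ category has some nonzero $C^{K_{x,0+}}$ --- and your reduction then runs uniformly in $r_0\geq 0$ with no modifications.
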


\begin{proof}
Assume $C$ is a $G((t))$-category with $C^{K_{x,r+},\circ}\cong 0$ for all choices of $x,r.$ Let us show that all the $C_{\leq r}$ are trivial. Assume otherwise --- then there exists some $r$ so that $C_{<r}\cong 0$ but $C_{\leq r}\not\cong 0$. But then $C_{\leq r}\in G((t))\operatorname{-rep}_{=r},$ so we have a contradiction by the previous corollary. By the exhaustiveness of the depth filtration (see the discussion after Definition \ref{depthdefinition}), this implies that $C$ is trivial, as desired.
\end{proof}

The rest of this section will be devoted to the proof of Theorem \ref{generation}.
\subsection{Proof of Theorem \ref{generation}}
\subsubsection{Outline}
To understand the proof, it is helpful to consider the following case. Let $y$ be another element of $X_*(T)\otimes\mathbb{R}$ and consider a single $K_{y,r}\times K_{x,r}$ orbit $K_{y,r}gK_{x,r}/K_{x,r+}$ inside $G((t))/K_{x,r+}$. By the definition of the depth filtration, any $K_{y,r}$-equivariant $D$-module supported on this orbit lies inside $\tau_{<r}D(G((t))/K_{x,r+})$. Hence the theorem predicts that, as a $D((K_{x,r+}/K_{x,r+})^*)$-module, $D(K_{y,r}\backslash K_{y,r}gK_{x,r}/K_{x,r+})$ should be supported on the unstable locus.

To explain this phenomenon, note that the right $K_{x,r}/K_{x,r+}$ action on $K_{y,r}\backslash K_{y,r}gK_{x,r}/K_{x,r+}$ has nontrivial kernel (and so is not faithful), given by $((g^{-1}K_{y,r}g\cap K_{x,r})+K_{x,r+})/K_{x,r+}.$ This automatically implies that the support of $D(K_{y,r}\backslash K_{y,r}gK_{x,r}/K_{x,r+})$ as a $D((K_{x,r+}/K_{x,r+})^*)$-module lies in $(g^{-1}K_{y,r}g\cap K_{x,r})^{\perp},$ and this turns out to always consist only of unstable points.

To see this, we apply the Bruhat decomposition to write $g$ as $p_ywp_x$, where $p_x\in P_x, p_y\in P_y,$ and $w$ is an element of the affine Weyl group. Then $g^{-1}K_{y,r}g=p_x^{-1}K_{wy,r}p_x,$ so $(g^{-1}K_{y,r}g\cap K_{x,r})^{\perp}\cong(\operatorname{Ad}p_x^{-1}(K_{wy,r}\cap K_{x,r}))^{\perp}.$ By Lemma \ref{input} below, this lies inside the unstable locus.

The proof below will be a bit more complicated, in order to run this argument in families, i.e., to consider more than one double coset at a time.

\begin{proof}

Denoting the unstable locus of $(K_{x,r}/K_{x,r+})^*$ by $(K_{x,r}/K_{x,r+})^*_{us},$ we can define

$$D(G((t))/K_{x,r+})_{us}\cong D(G((t))/K_{x,r+})\otimes_{D((K_{x,r}/K_{x,r+})^*)}D((K_{x,r}/K_{x,r+})^*_{us}).$$

So our theorem is equivalent to the statement that $\tau_{< r}D(G((t))/K_{x,r+})$ and $D(G((t))/K_{x,r+})_{us}$ are equal as full subcategories of $D(G((t))/K_{x,r+})$. We prove this by showing inclusions both ways. The following lemma, whose proof we delay, isolates the key geometric input. It is essentially a version of the propositions in Section 6 of \cite{MP1}.

\begin{lemma}\label{input}
Let $Z,Z'\subseteq(K_{x,r}/K_{x,r+})^*$ be defined by

$$Z=\displaystyle\bigcup_{y}(K_{x,r}\cap K_{y,r})^{\perp}$$

\noindent and

$$Z'=\displaystyle\bigcup_{y\mid K_{x,r+}\subseteq K_{y,r}\subseteq K_{x,r}}K_{y,r}^{\perp}.$$

Then these unions can be taken to be finite and $L_x\cdot Z=L_x\cdot Z'=(K_{x,r}/K_{x,r+})^*_{us}$.
\end{lemma}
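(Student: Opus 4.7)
The plan is to reduce everything to an elementary combinatorial analysis of the $T$-weight decomposition of $V := K_{x,r}/K_{x,r+}$. Concretely, $V = \bigoplus_{(\alpha,i) \in S_{x,r}} \mathfrak{g}_\alpha t^i$ where $S_{x,r} := \{(\alpha, i) : \langle \alpha, x\rangle + i = r\}$ is finite, and the action of a cocharacter $\mu \in X_*(T) \subset X_*(L_x)$ contracts the dual component $(\mathfrak{g}_\alpha t^i)^* \subset V^*$ to $0$ exactly when $\langle \alpha, \mu\rangle < 0$. So the Hilbert-Mumford criterion reduces each direction of the equality to matching two subsets of $S_{x,r}$.

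For $L_x \cdot Z \subseteq (K_{x,r}/K_{x,r+})^*_{us}$, fix $y$ and compute that the image of $\mathfrak{k}_{x,r} \cap \mathfrak{k}_{y,r}$ inside $V$ is $\bigoplus_{\langle \alpha, y-x\rangle \geq 0} \mathfrak{g}_\alpha t^i$, so its annihilator $(\mathfrak{k}_{x,r} \cap \mathfrak{k}_{y,r})^\perp$ is supported on those $(\alpha,i)$ with $\langle \alpha, y-x\rangle < 0$. The finitely many hyperplanes $\{\langle \alpha,\cdot\rangle = 0\}_{(\alpha,i)\in S_{x,r}}$ partition $X_*(T)\otimes\mathbb{R}$ into finitely many open chambers, so by density of the rational cocharacter lattice in each chamber I can choose $\mu \in X_*(T)$ such that $\langle \alpha, \mu\rangle$ and $\langle \alpha, y-x\rangle$ have the same sign whenever the latter is nonzero; then $\mu$ contracts the entire annihilator to $0$. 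Conversely, for $(K_{x,r}/K_{x,r+})^*_{us} \subseteq L_x \cdot Z'$, apply Hilbert-Mumford to a given unstable $v$ to get a destabilizing $\mu \in X_*(L_x)$, conjugate by $L_x$ to assume $\mu \in X_*(T)$ (the conjugation is absorbed into the $L_x \cdot$), and set $y := x + \epsilon\mu$ for $\epsilon > 0$ small. A direct check, using that the set $\{\langle \alpha,x\rangle + i - r\}_{(\alpha,i)}$ is discrete away from $0$ and only finitely many roots $\alpha$ are relevant, shows $K_{x,r+} \subseteq K_{y,r} \subseteq K_{x,r}$ for small $\epsilon$; the image of $\mathfrak{k}_{y,r}$ in $V$ is $\bigoplus_{\langle \alpha,\mu\rangle \geq 0} \mathfrak{g}_\alpha t^i$, so $K_{y,r}^\perp$ is precisely the contraction locus of $\mu$, which contains $v$. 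The tautological inclusion $Z' \subseteq Z$ (since $K_{y,r} \subseteq K_{x,r}$ forces $K_{x,r}\cap K_{y,r} = K_{y,r}$) then closes the chain of inclusions $(K_{x,r}/K_{x,r+})^*_{us} \subseteq L_x \cdot Z' \subseteq L_x \cdot Z \subseteq (K_{x,r}/K_{x,r+})^*_{us}$.

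Finiteness of both unions is automatic from the same analysis: the image of $\mathfrak{k}_{x,r} \cap \mathfrak{k}_{y,r}$ (respectively of $\mathfrak{k}_{y,r}$ under the containment condition) in the finite-dimensional $V$ depends only on the sign pattern $(\operatorname{sgn}\langle \alpha,y-x\rangle)_{(\alpha,i)\in S_{x,r}}$, which takes finitely many values as $y$ varies, giving finitely many subspaces and hence finitely many annihilators. I expect the main obstacle to be the Hilbert-Mumford step combined with the perturbation argument in the second direction: one must choose $\epsilon$ uniformly small so that all the relevant Moy-Prasad inequalities are preserved under $y \mapsto y + \epsilon \mu$, which requires the discreteness observation for the values $\langle \alpha,x\rangle + i$ clustered near $r$. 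The chamber-density argument handling the other direction is minor but essential to bridge real points of the apartment with integer cocharacters usable in GIT.
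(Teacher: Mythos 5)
Your proof is correct and follows essentially the same route as the paper: identify the image of $\mathfrak{k}_{x,r}\cap\mathfrak{k}_{y,r}$ (resp.\ $\mathfrak{k}_{y,r}$) in $K_{x,r}/K_{x,r+}$ via the root-space decomposition, apply Hilbert--Mumford in both directions, and handle the second inclusion with $y = x + \epsilon\beta$ for a conjugated one-parameter subgroup $\beta$ and small $\epsilon$. The only cosmetic difference is that, to produce an actual cocharacter for the destabilization, the paper first reduces to $y - x \in X_*(T)\otimes\mathbb{Q}$ and scales $x - y$, whereas you argue by density of rational cocharacters in the open chamber containing $y-x$; these are interchangeable. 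Your finiteness argument (the finitely many sign patterns on $S_{x,r}$) matches the paper's observation that only finitely many subgroups $(K_{x,r}\cap K_{y,r})+K_{x,r+}$ can occur.
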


\subsubsection{The first inclusion}

First we show that $\tau_{< r}D(G((t))/K_{x,r+})\subseteq D(G((t))/K_{x,r+})_{us}$. By Corollary \ref{minimal}, it suffices to show that, for any $y\in X_*(T)\otimes\mathbb{R}$ and $s < r$, we have $D(G((t))/K_{x,r+})^{K_{y,s+}}\subseteq D(G((t))/K_{x,r+})_{us}.$ As $K_{y,r}\subseteq K_{y,s+}$, it suffices to show that $D(G((t))/K_{x,r+})^{K_{y,r}}\subseteq D(G((t))/K_{x,r+})_{us}.$ Equivalently, we need to show that the induced action of $D((K_{x,r}/K_{x,r+})^{*,\circ})$ on $D(K_{y,r}\backslash G((t))/K_{x,r+})$ is trivial.

The Bruhat decomposition tells us that we can write $G((t))$ as a union of strata $P_ywP_x$, where $w$ is an element of the extended affine Weyl group. Thus, by a d\'evissage argument, we are reduced to showing that $D((K_{x,r}/K_{x,r+})^{*,\circ})$ acts trivially on $D(K_{y,r}\backslash P_ywP_x/K_{x,r+})$.

We have a smooth cover
$$(K_{y,r}\backslash P_y)\times((P_x\cap w^{-1}K_{y,r}w)\backslash P_x/K_{x,r+})\rightarrow K_{y,r}\backslash P_ywP_x/K_{x,r+}$$

\noindent given by $(a,b)\mapsto awb$. As pullback along this map is conservative, it suffices to show that $D((K_{x,r}/K_{x,r+})^{*,\circ})$ acts trivially on $D((K_{y,r}\backslash P_y)\times((P_x\cap w^{-1}K_{y,r}w)\backslash P_x/K_{x,r+})).$ Here the action is entirely on the second factor, so by Corollary 8.3.4 of \cite{DrGa}, it is enough to prove the triviality of the action on $D((P_x\cap w^{-1}K_{y,r}w)\backslash P_x/K_{x,r+})$. Once again, we can pass to a smooth cover and work instead with $D((K_{x,r}\cap w^{-1}K_{y,r}w)\backslash P_x/K_{x,r+}).$

Using that $r>0$, we have a natural $K_{x,r}/K_{x,r+}$-equivariant map

$$(K_{x,r}\cap w^{-1}K_{y,r}w)\backslash P_x/K_{x,r+}\rightarrow L_x,$$

\noindent where the action on $L_x$ is trivial.

The fiber over an element $l\in L_x$ is isomorphic to the stack $(K_{x,r}\cap w^{-1}K_{y,r}w)\backslash l\cdot K_{x,0+}/K_{x,r+}.$ This is the relative classifying stack of a flat unipotent group over the variety $K_{x,0+}/(K_{x,r+}\cdot\operatorname{Ad}_{l^{-1}}(K_{x,r}\cap w^{-1}K_{y,r}w)),$ and hence we have a $K_{x,r}/K_{x,r+}$-equivariant equivalence

$$D((K_{x,r}\cap w^{-1}K_{y,r}w)\backslash l\cdot K_{x,0+}/K_{x,r+})\cong D(K_{x,0+}/(K_{x,r+}\cdot\operatorname{Ad}_{l^{-1}}(K_{x,r}\cap w^{-1}K_{y,r}w))).$$

The action of $D(K_{x,r}/K_{x,r+})$ on $D(K_{x,0+}/(K_{x,r+}\times\operatorname{Ad}_{l^{-1}}(K_{x,r}\cap w^{-1}K_{y,r}w)))$ factors through 

$$D(K_{x,r}/(K_{x,r+}\cdot\operatorname{Ad}_{l^{-1}}(K_{x,r}\cap w^{-1}K_{y,r}w)))\cong D(\operatorname{Ad}_{l^{-1}}(K_{x,r}\cap w^{-1}K_{y,r}w)^{\perp}),$$

\noindent where $\operatorname{Ad}_{l^{-1}}(K_{x,r}\cap w^{-1}K_{y,r}w)^{\perp}$ is considered as a subspace of $(K_{x,r}/K_{x,r+})^*$. So we need to show that this subspace lies in $(K_{x,r}/K_{x,r+})_{us}.$ This follows from the lemma and the observation that $w^{-1}K_{y,r}w\cong K_{w\cdot y,r}.$

\subsubsection{The second inclusion}

Now we prove the other direction. It suffices to show that $D(G((t))/K_{x,r+})_{us}$ is of depth strictly less than $r$. Our proof will in fact show that this category is generated by the images of the ${a_{y,s}}$, where the pair $(y,s)$ ranges over those choices with $K_{x,r+}\subseteq K_{y,r}\subseteq K_{x,r}$ and $s < r$. 

Choose a representative $y$ for each possible subgroup $K_{y,r}$ satisfying $K_{x,r+}\subseteq K_{y,r}\subseteq K_{x,r}$, along with a $s$ such that $K_{y,s+}\cong K_{y,r}$. We have a natural convolution map

$$\displaystyle\bigcup_y D(G((t))/K_{y,r})\otimes D(K_{y,r}\backslash P_x)\rightarrow D(G((t))/K_{x,r+})$$

\noindent (here we use that $D(K_{y,r}\backslash P_x)\cong D(K_{y,r}\backslash P_x/K_{x,r+}).$) The left hand side is clearly of depth $< r$, and so this map must land in $D(G((t))/K_{x,r+})_{us}$. If we show that the image of this map generates $D(G((t))/K_{x,r+})_{us}$ under colimits, we will done by Lemma \ref{subquotient}. Let us understand this map in the Fourier-transformed picture. The category $D(G((t))/K_{y,r})$ can be rewritten:

\begin{align*}
D(G((t))/K_{y,r})&\cong D(G((t))/K_{x,r+})\otimes_{D(K_{y,r}/K_{x,r+})}\operatorname{Vect}\\
&\cong D(G((t))/K_{x,r+})\otimes_{D(K_{x,r}/K_{x,r+})}D(K_{x,r}/K_{y,r})\\
&\cong D(G((t))/K_{x,r+})\otimes_{D((K_{x,r}/K_{x,r+})^*)}D(K_{y,r}^{\perp}).
\end{align*}

After applying an automorphism to the LHS, we can identify the previously mentioned convolution map with $D(G((t))/K_{x,r+})\otimes_{D((K_{x,r}/K_{x,r+})^*)}\pi_*$, where $\pi$ is the map

$$\displaystyle\bigcup_y(K_{y,r}^{\perp}\times (K_{y,r}\backslash P_x))\rightarrow\displaystyle\bigcup_y(K_{y,r}^{\perp}\times L_x)\rightarrow(K_{x,r}/K_{x,r+})^*_{us},$$

\noindent with the right map coming from the $L_x$-action. By the lemma, $\pi$ is surjective, so Lemma \ref{appendixgen} implies that the image of pushforward along $\pi$ generates $D((K_{x,r}/K_{x,r+})^*_{us})$ under colimits. The property of having generating image is stable under tensor product (it is equivalent to having trivial quotient) and so the convolution map retains this property, as desired.

\subsubsection{Proof of Lemma \ref{input}}

Now we explain the proof of the lemma.
\begin{proof}
First, note that $K_{y,r}$ is a sum of Kac-Moody weight spaces, and so there are only finitely many possibilities for the group $(K_{x,r}\cap K_{y,r})+K_{x,r+}.$ This immediately implies the desired finiteness of the unions, so we focus on the remaining statement.

We clearly have $L_x\cdot Z'\subseteq L_x\cdot Z$, so it suffices to prove that $(K_{x,r}/K_{x,r+})^*_{us}\subseteq L_x\cdot Z'$ and $L_x\cdot Z\subseteq(K_{x,r}/K_{x,r+})^*_{us}.$ We first show the latter statement.

As $(K_{x,r}/K_{x,r+})^*_{us}$ is a $L_x$-invariant subvariety, it suffices to show that $Z\subseteq (K_{x,r}/K_{x,r+})^*_{us}$, or equivalently, that $(K_{y,r}\cap K_{x,r})^{\perp}\subseteq(K_{x,r}/K_{x,r+})^*_{us}$ for any $y\in X_*(T)\otimes\mathbb{R}$.

If we have the decomposition

$$K_{x,r}/K_{x,r+}\cong\displaystyle\bigoplus_{\alpha}\mathfrak{g}_{\alpha}t^{i_{\alpha}}$$

\noindent then we can identify

$$(K_{y,r}\cap K_{x,r}+K_{x,r+})/K_{x,r+}\cong\displaystyle\bigoplus_{\langle y-x,\alpha\rangle\geq 0}\mathfrak{g}_{\alpha}t^{i_{\alpha}}$$

\noindent and

$$(K_{y,r}\cap K_{x,r})^{\perp}\cong\displaystyle\bigoplus_{\langle y-x,\alpha\rangle > 0}\mathfrak{g}_{-\alpha}^*t^{i_{\alpha}}.$$

By the same logic as in Lemma \ref{rationality}, we can assume $y-x\in X_*(T)\otimes\mathbb{Q}$. Taking a multiple of $x-y$ which lies in $X_*(T)$, we find a one-parameter subgroup of $T$ which contracts $(K_{y,r}\cap K_{x,r})^{\perp}$ to the origin. This implies that $(K_{y,r}\cap K_{x,r})^{\perp}$ lies inside the unstable locus, as desired.

Now we prove that $(K_{x,r}/K_{x,r+})^*_{us}\subseteq L_x\cdot Z'$. We essentially reverse the logic of the previous implication. Let $p$ be a point of $(K_{x,r}/K_{x,r+})^*_{us}$. By the Hilbert-Mumford criterion, we have a one-parameter subgroup $\mathbb{G}_m\rightarrow L_x$ which contracts $p$ to the origin. We can conjugate this one-parameter subgroup to lie in $T$, so there is some $\beta\in X_*(T)$ and $l\in L_x$ such that

$$l\cdot p\in \displaystyle\bigoplus_{\langle \beta,\alpha\rangle > 0}\mathfrak{g}_{-\alpha}^*t^{i_{\alpha}}.$$

Then for sufficiently small $\epsilon > 0$, if we take $y=x+\epsilon\beta,$ we have $K_{x,r+}\subseteq K_{y,r}\subseteq K_{x,r}$ and

$$l\cdot p\in K_{y,r}^{\perp}$$

\noindent which implies the desired statement. (This argument also implies that $Z=Z'$, but we do not use this.)
\end{proof}
\end{proof}

\section{Depth filtration on KM}

\subsection{Review}

\subsubsection{The category $\hat{\mathfrak{g}}_{\crit}\operatorname{-mod}$}

We now analyze the depth filtration on an important $G((t))$-category. This category, which we will denote $\hat{\mathfrak{g}}\operatorname{-mod}_{\kappa}$, was defined in Section 23 of \cite{FG6} as a ``renormalized'' version of the category of representations of the affine lie algebra $\hat{\mathfrak{g}}$ at level $\kappa$. The renormalization is necessary for the existence of a $G((t))$-action (of level $\kappa$).

For a reader unfamiliar with this renormalization procedure, we advise ignoring it on a first pass through this section.  To understand the $G((t))$ action, it may help to know that for $K\subseteq G((t))$, the invariant category $\hat{\mathfrak{g}}\operatorname{-mod}_{\kappa}^K$ is a renormalized derived category of $K$-integrable $\hat{\mathfrak{g}}$-modules at level $\kappa$. On the other hand, for a detailed modern treatment of both the renormalization and the $G((t))$-action, we refer to \cite{semiinf}.

In this section, we restrict to the case of critical level, which we will shorthand with a subscript, e.g., $\hat{\mathfrak{g}}_{\crit}$. At all other levels, the universal enveloping algebra $U(\hat{\mathfrak{g}})_{\kappa}$ has trivial center. But at critical level, we have an identification between the center of $U(\hat{\mathfrak{g}})_{\crit}$ and the ring of functions on the space $\Op_{\check{\mathfrak{g}}}(\mathring{D})$ of $\check{\mathfrak{g}}$-opers on the punctured disk, where $\check{\mathfrak{g}}$ is the Langlands dual Lie algebra of $\mathfrak{g}.$ (Henceforth, we will denote this space by $\Op_{\check{\mathfrak{g}}}$, omitting the $\mathring{D}$. As we never consider opers on any space other than the punctured disk, we believe that this will cause no confusion.)

Let us quickly explain this statement (first proven in \cite{FF}). First we introduce opers (following \cite{FG2}.) For brevity's sake, we work explicitly in the coordinate $t$.

\subsubsection{Opers}

Let $\check{\rho}$ denote the half-sum of positive coroots of $\check{\mathfrak{g}}$. The adjoint action of $\check{\rho}$ gives us a $\mathbb{Z}$-grading $\check{\mathfrak{g}}\cong\oplus\check{\mathfrak{g}}_i,$ with $\check{\mathfrak{b}}\cong\check{\mathfrak{g}}_0\oplus\check{\mathfrak{g}}_1\oplus\cdots$. Pick $p_{-1}\in\check{\mathfrak{g}}_{-1}$ and $p_1\in\check{\mathfrak{g}}_1$ such that $\{p_{-1},2\check{\rho},p_1\}$ is a $\mathfrak{sl}_2$ triple. Then an oper on the punctured disk is a $N((t))$-gauge equivalence class of connections of the form $\nabla + (p_{-1}+\check{\mathfrak{b}}((t)))dt$, where $\nabla$ is the trivial connection (on the trivial bundle).

The following lemma, which appears as Proposition 1.3.1 in \cite{FG2} and was first proven in \cite{DS}, will be useful for working with opers. Let the kernel of $\operatorname{ad} p_1$ be denoted $V_{\can}=V_{\can,0}\oplus V_{\can,1}\oplus\cdots\subseteq\check{\mathfrak{b}},$ where we have $V_{\can,i}\subseteq \check{\mathfrak{g}}_i$.

\begin{lemma}
Every $N((t))$-gauge equivalence class as above has a unique representative of the form

$$\nabla+(p_{-1}+v(t))dt$$

\noindent with $v(t)\in V_{\can}((t))$.
\end{lemma}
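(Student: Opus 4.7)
The plan is to prove existence and uniqueness by an inductive algorithm indexed by the $\check\rho$-grading on $\check{\mathfrak{g}}$. The essential linear algebra input is the decomposition $\check{\mathfrak{b}}=V_{\can}\oplus\operatorname{ad}(p_{-1})(\check{\mathfrak{n}})$, with $\operatorname{ad}(p_{-1}):\check{\mathfrak{n}}\to\check{\mathfrak{b}}$ injective. This comes from $\mathfrak{sl}_2$-representation theory for the triple $\{p_{-1},2\check\rho,p_1\}$: $V_{\can}$ is the span of $\mathfrak{sl}_2$-highest weight vectors, while $\check{\mathfrak{n}}=\check{\mathfrak{g}}_{\geq 1}$ contains no lowest-weight vectors (which would have $\check\rho$-weight $\leq 0$), so $\operatorname{ad}(p_{-1})$ is injective on $\check{\mathfrak{n}}$ and its image is complementary to $V_{\can}$. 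Grading by $\check\rho$, this gives $\check{\mathfrak{g}}_{i-1}=V_{\can,i-1}\oplus\operatorname{ad}(p_{-1})(\check{\mathfrak{g}}_i)$ for each $i\geq 1$.

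For existence, I would start with an arbitrary representative $\nabla+(p_{-1}+b(t))dt$, with $b=b_0+b_1+\cdots$ in the grading, and apply successive gauge transformations of the form $\exp(X_i)$ with $X_i\in\check{\mathfrak{g}}_i((t))$. A direct calculation shows that such a transformation changes the connection form by $-\operatorname{ad}(p_{-1})(X_i)+[X_i,b]-\partial_tX_i+(\text{higher order in }X_i)$; the key observation is that the leading piece $-\operatorname{ad}(p_{-1})(X_i)$ lies in degree $i-1$, while every other correction lies in degrees $\geq i$. Proceeding inductively on $i=1,2,\ldots$, the decomposition above lets me uniquely choose $X_i\in\check{\mathfrak{g}}_i((t))$ so that after gauging by $\exp(X_i)$ the new degree-$(i-1)$ part of $b$ lies in $V_{\can,i-1}((t))$; this step fixes degree $i-1$ without disturbing the already-canonical lower degrees. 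Since the $\check\rho$-grading on $\check{\mathfrak{g}}$ is bounded, the process terminates after finitely many steps, and composing the $\exp(X_i)$ produces an element of $N((t))$ putting the connection in canonical form.

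Uniqueness uses the same analysis in reverse. Suppose $p_{-1}+v(t)$ and $p_{-1}+v'(t)$ with $v,v'\in V_{\can}((t))$ are related by $g=\exp(X)\in N((t))$, and let $X_i$ be the lowest nonzero graded component of $X$. Reading off the degree-$(i-1)$ part of the gauge formula gives $v'_{i-1}-v_{i-1}=-\operatorname{ad}(p_{-1})(X_i)$, but the left side lies in $V_{\can,i-1}((t))$ and the right side in $\operatorname{ad}(p_{-1})(\check{\mathfrak{g}}_i((t)))$, which intersect trivially; hence $\operatorname{ad}(p_{-1})(X_i)=0$, and $X_i=0$ by injectivity. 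Iterating forces $X=0$ and $v=v'$.

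The main obstacle is bookkeeping: verifying carefully that the higher-order gauge contributions from expanding $\exp(X)$ (the $\tfrac12[X,[X,-]]$ terms, and the cross-terms arising when composing successive corrections $\exp(X_i)$) genuinely land in degrees $\geq i$, so that the inductive algorithm never disturbs previously-corrected degrees. Because $\check{\mathfrak{g}}$ has only finitely many nonzero graded pieces, there are no convergence issues, and the required statements reduce to bounded combinatorics of iterated brackets in a graded Lie algebra.
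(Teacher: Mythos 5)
The paper does not give a proof of this lemma---it cites Proposition 1.3.1 of \cite{FG2} and the original source \cite{DS}---so there is no internal argument to compare against. Your proof is correct and is essentially the standard Drinfeld--Sokolov canonical-form argument: the linear-algebra input $\check{\mathfrak{g}}_{i-1}=V_{\can,i-1}\oplus\operatorname{ad}(p_{-1})(\check{\mathfrak{g}}_i)$ for $i\geq 1$ is the right statement of $\mathfrak{sl}_2$-theory (lowest-weight vectors all sit in nonpositive $\check\rho$-degree, giving injectivity of $\operatorname{ad}(p_{-1})$ on $\check{\mathfrak{n}}$, and highest-weight vectors span the cokernel), and the inductive gauging by $\exp(X_i)$ with $X_i\in\check{\mathfrak{g}}_i((t))$ correctly isolates $-\operatorname{ad}(p_{-1})(X_i)$ as the only contribution in degree $i-1$ while all of $[X_i,b]$, the higher $\operatorname{Ad}$-terms, and $-(\partial_t\exp(X_i))\exp(-X_i)$ land in degree $\geq i$. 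The bookkeeping you flag as the main obstacle is genuinely harmless: since $X_i$ has $\check\rho$-degree $i\geq 1$ and $p_{-1}+b$ has degree $\geq -1$, every iterated bracket with at least two $X_i$'s lands in degree $\geq 2i-1\geq i$, and the process terminates after $h$ steps where $h$ is the largest exponent of $\check{\mathfrak{g}}$; working with $((t))$-coefficients introduces no convergence issues because everything is degree-by-degree in the nilpotent grading. The uniqueness argument (lowest nonzero graded piece of $\log g$ must vanish) is likewise correct. One small point worth recording explicitly: $\exp:\check{\mathfrak{n}}((t))\to N((t))$ is a bijection since $\check{\mathfrak{n}}$ is nilpotent, which is what lets you write an arbitrary gauge transformation as $\exp(X)$ and take its lowest graded component.
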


So the moduli space of opers, defined by an a priori potentially nasty quotient, can in fact be represented (non-canonically) by an ind-pro-affine space. Let us rewrite this in terms of its ring of functions. As $\Op_{\check{\mathfrak{g}}}$ is an ind-scheme, its ring of functions is best represented by a topological algebra (see, for instance, Section 19.2 of \cite{FG2}.)

Note that any element $f\in V_{\can}^*((t))$ defines a function on $\Op_{\check{\mathfrak{g}}}$, namely by sending an oper $\nabla+(p_{-1}+v(t))dt$ to $\operatorname{Res}(\langle f,v(t)\rangle dt).$ This extends to an isomorphism of topological algebras

\begin{equation}\label{eq1}
\mathcal{O}(\Op_{\check{\mathfrak{g}}})\cong\operatorname{Sym}(V_{\can}^*((t))),
\end{equation}

\noindent where the topology on the symmetric algebra is that induced by the $\overset{!}{\otimes}$ tensor product (see, again, Section 19.1 of \cite{FG2}) from the Laurent series topology on $V_{\can}^*((t))$. We emphasize again that this isomorphism is not canonical\footnote{However, the structure of torsor for $V_{\can}((t))$ (with a modified $\operatorname{Aut}(\mathring{D})$-action) on $\Op_{\check{\mathfrak{g}}}$ is canonical, and this suffices to show the canonicity of our later constructions. See equation (4.2-3) of \cite{frenkelbook}.}: it is not equivariant with respect to $\operatorname{Aut}(\mathring{D})$.

\subsubsection{Feigin-Frenkel}

The relevance of opers for us comes from the previously mentioned Feigin-Frenkel isomorphism,

$$Z(U(\hat{\mathfrak{g}})_{\crit})\cong\mathcal{O}(\Op_{\check{\mathfrak{g}}}),$$

\noindent where $Z(U(\hat{\mathfrak{g}})_{\crit})$ denotes the center of $U(\hat{\mathfrak{g}})_{\crit},$ with the subspace topology. We now state two compatibilities that this isomorphism satisfies. Both the isomorphism and its compatibilities are proven in Theorem 4.3.2 of \cite{frenkelbook} and the discussion afterwards.

The first compatibility that we need is that the Feigin-Frenkel isomorphism exchanges the natural $\operatorname{Aut}(\mathring{D})$ actions on both algebras. In particular, it respects the conformal grading corresponding to the subgroup $\mathbb{G}_m\subseteq\operatorname{Aut}(\mathring{D}).$ We will denote the $i$th graded part by a superscript; for instance, using equation (4.2-3) of \cite{frenkelbook} we compute

\begin{equation}\label{grading}
V_{\can}^*((t))^i\cong\underset{n}{\oplus} V_{\can,n}^*t^{i+n}.
\end{equation}

To phrase the second compatibility, let us introduce natural filtrations on both sides. On the affine algebra side, we have the PBW filtration on $U(\hat{\mathfrak{g}}_{\crit})$, which induces a filtration by restriction on its center. For the other side, we use the description provided by \eqref{eq1}. We can define a filtration $F_iV_{\can}^*((t))$ by

$$F_iV_{\can}^*((t))=\underset{n<i}{\oplus}V_{\can,n}^*((t)),$$

\noindent and this induces a multiplicative filtration on $\operatorname{Sym}(V_{\can}^*((t)))\cong\mathcal{O}(\Op_{\check{\mathfrak{g}}}).$ So for instance, $F_2\operatorname{Sym}(V_{\can}^*((t)))\cong k\oplus V_{\can,0}^*((t))\oplus V_{\can,1}^*((t))\oplus\operatorname{Sym}^2(V_{\can,0}^*((t))).$

Although the isomorphism \eqref{eq1} is not canonical, this filtration is (and in particular, it is $\operatorname{Aut}(\mathring{D})$-equivariant and so is compatible with the conformal grading.) In fact, it is sent to the PBW filtration by Feigin-Frenkel. Furthermore, the map on associated graded algebras

\begin{equation}\label{compatibility}
\operatorname{gr}^F\mathcal{O}(\Op_{\check{\mathfrak{g}}})\cong\operatorname{Sym}(V_{\can}^*((t)))\rightarrow\operatorname{gr}^FU(\hat{\mathfrak{g}})_{\crit}\cong\operatorname{Sym}(\mathfrak{g}((t)))\end{equation}

\noindent can be described explicitly as follows. The LHS and RHS can be described as the ring of functions on the space of $k((t))$-points of $V_{\can}$ and $\mathfrak{g}^*$, respectively, so it suffices to produce a map of varieties $\mathfrak{g}^*\rightarrow V_{\can}$. By the theory of the Kostant section, we have a canonical isomorphism $V_{\can}\cong\check{\mathfrak{g}}//\check{G}\cong\mathfrak{g}^*//G,$ which gives us the desired map.

We now combine the conformal grading and the filtration $F_i$ to define the slope filtration on the space of opers. For $r$ a nonnegative rational number, define

\begin{equation*}
V_{\can}^*((t))_{>r}=\displaystyle\bigoplus_{i,j\in\mathbb{Z}_{> 0}\mid j>r\cdot i}F_iV_{\can}^*((t))^j
\end{equation*}

\noindent or, using (\ref{grading}),

\begin{equation}\label{explicit}
V_{\can}^*((t))_{>r}=\displaystyle\bigoplus_{n,k\in\mathbb{Z}_{\geq 0}\mid k>r(n+1)+n}V_{\can,n}^*((t))t^k.
\end{equation}

Let $\Op_{\check{\mathfrak{g}}}^{\leq r}$ be the vanishing locus of $V_{\can}^*((t))_{>r}$. It is the space of opers with slope at most $r$ in the sense of \cite{CK}. They make the remarkable observation that the slope of any local system can be computed explicitly from any oper structure on it. This is done via a simple formula, Definition 1 in loc.cit., which can be matched with the definition above.

\subsection{Statement of result}

The main result of this section ``morally'' states that $\hat{\mathfrak{g}}_{\crit}\operatorname{-mod}_{\leq r}$ can be identified with the subcategory of representations which, when viewed as a module over $Z(U(\hat{\mathfrak{g}})_{\crit})\cong\mathcal{O}(\Op_{\check{\mathfrak{g}}}),$ are (set-theoretically) supported on $\Op_{\check{\mathfrak{g}}}^{\leq r}.$ Because of renormalization issues, we need the machinery of \cite{semiinf} to define this latter category.

By Theorem 11.18.1 of \cite{semiinf}, there is a natural coaction of the coalgebra $\IndCoh^*(\Op_{\check{\mathfrak{g}}})$ on $\hat{\mathfrak{g}}_{\crit}\operatorname{-mod}$, and this coaction commutes with the $G((t))$-action. More explicitly, we have a coaction map

$$\hat{\mathfrak{g}}_{\crit}\operatorname{-mod}\rightarrow\hat{\mathfrak{g}}_{\crit}\operatorname{-mod}\otimes\IndCoh^*(\Op_{\check{\mathfrak{g}}})$$

\noindent along with the data of its compatibilities with the comultiplication map

$$\IndCoh^*(\Op_{\check{\mathfrak{g}}})\rightarrow\IndCoh^*(\Op_{\check{\mathfrak{g}}})\otimes\IndCoh^*(\Op_{\check{\mathfrak{g}}}).$$

The coaction encodes restriction of modules along the homomorphism

$$\mathcal{O}(\Op_{\check{\mathfrak{g}}})\otimes U(\hat{\mathfrak{g}})_{\crit}\rightarrow U(\hat{\mathfrak{g}})_{\crit}.$$

For any two comodules $M,N$ of $\IndCoh^*(\Op_{\check{\mathfrak{g}}})$, we can define a cotensor product $M\overset{\IndCoh^*(\Op_{\check{\mathfrak{g}}})}{\otimes}N$ as the totalization of the cosimplicial object beginning

$$M\otimes N\rightrightarrows M\otimes\IndCoh^*(\Op_{\check{\mathfrak{g}}})\otimes N\cdots.$$

Let $\widehat{\Op_{\check{\mathfrak{g}}}^{\leq r}}$ be the formal completion of $\Op_{\check{\mathfrak{g}}}$ along $\Op_{\check{\mathfrak{g}}}^{\leq r}$. Then we define

$$\hat{\mathfrak{g}}_{\crit}\operatorname{-mod}_{\widehat{\Op_{\check{\mathfrak{g}}}^{\leq r}}}\cong\hat{\mathfrak{g}}_{\crit}\operatorname{-mod}\overset{\IndCoh^*(\Op_{\check{\mathfrak{g}}})}{\otimes}\IndCoh^*(\widehat{\Op_{\check{\mathfrak{g}}}^{\leq r}}).$$

\begin{remark}
It is possible to dualize $\IndCoh^*(\Op_{\check{\mathfrak{g}}})$ to get an algebra (as opposed to coalgebra) $\IndCoh^!(\Op_{\check{\mathfrak{g}}})$, and to rewrite our definitions above in terms of modules rather than comodules. This is made explicit in Remark 6.4.1 of \cite{raskinFG}. Our choice, following loc.cit., to work with coalgebras has the advantage that the coaction map can be interpreted explicitly as a restriction map.
\end{remark}

\begin{theorem}\label{depthKM}
There is a natural equivalence of subcategories

$$\hat{\mathfrak{g}}_{\crit}\operatorname{-mod}_{\leq r}\cong\hat{\mathfrak{g}}_{\crit}\operatorname{-mod}_{\widehat{\Op_{\check{\mathfrak{g}}}^{\leq r}}}.$$
\end{theorem}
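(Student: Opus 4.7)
The plan is to prove the equivalence by showing both inclusions as full subcategories of $\hat{\mathfrak{g}}_{\crit}\operatorname{-mod}$. For $\hat{\mathfrak{g}}_{\crit}\operatorname{-mod}_{\leq r} \subseteq \hat{\mathfrak{g}}_{\crit}\operatorname{-mod}_{\widehat{\Op_{\check{\mathfrak{g}}}^{\leq r}}}$, first observe that the target subcategory is preserved by the $G((t))$-action, since the $\IndCoh^*(\Op_{\check{\mathfrak{g}}})$-coaction commutes with the $G((t))$-action. Hence by Corollary \ref{minimal} it suffices to verify the inclusion on each $\hat{\mathfrak{g}}_{\crit}\operatorname{-mod}^{K_{x,s+}}$ with $s \leq r$, and even to verify that each such subcategory lands in $\hat{\mathfrak{g}}_{\crit}\operatorname{-mod}_{\widehat{\Op_{\check{\mathfrak{g}}}^{\leq s}}}$. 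The core calculation would use the PBW compatibility \eqref{compatibility}: a central element $z$ whose image in $\mathcal{O}(\Op_{\check{\mathfrak{g}}})$ lies in the ideal generated by $V_{\can}^*((t))_{>s}$ has PBW symbol in $\operatorname{Sym}(V_{\can}((t))) \subseteq \operatorname{Sym}(\mathfrak{g}((t)))$ which, by \eqref{explicit} and the weight description of $\mathfrak{k}_{x,s+}$, vanishes on $\mathfrak{k}_{x,s+}^\perp$. A PBW induction then yields a nilpotent action of $z$ on $K_{x,s+}$-invariant vectors, giving the required set-theoretic, and hence formal-completion, support statement. This direction is essentially that of \cite{CK} rephrased in our filtered language.

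For the converse, I would apply the corollary of Theorem \ref{generation}: a $G((t))$-category $C$ satisfies $C \cong C_{\leq r}$ precisely when $C^{K_{x,s+},\circ} \cong 0$ for every $s > r$ and every $x$. Applied to $C = \hat{\mathfrak{g}}_{\crit}\operatorname{-mod}_{\widehat{\Op_{\check{\mathfrak{g}}}^{\leq r}}}$, the task becomes: for each $s > r$, show that the GIT-semistable component of the $(K_{x,s}/K_{x,s+})^*$-action on $\hat{\mathfrak{g}}_{\crit}\operatorname{-mod}^{K_{x,s+}}_{\widehat{\Op_{\check{\mathfrak{g}}}^{\leq r}}}$ is zero. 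On $K_{x,s+}$-invariants, the Fourier-dualized $(K_{x,s}/K_{x,s+})^*$-action records the leading symbol of the Kac-Moody action; GIT-semistability produces a Chevalley invariant of $L_x$ (lifting to one of $G$) that is nonzero on this symbol. Pulling back through the Kostant section in \eqref{compatibility} and the conformal grading \eqref{grading}, this Chevalley invariant lifts to a central element whose image in $\mathcal{O}(\Op_{\check{\mathfrak{g}}})$ is a generator lying in $V_{\can}^*((t))_{>r}$. Such a central element acts invertibly on the $\circ$-locus but must act pro-nilpotently on anything supported on $\widehat{\Op_{\check{\mathfrak{g}}}^{\leq r}}$, forcing the desired vanishing.

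The main obstacle is the bidegree bookkeeping in the converse direction. One must verify that the Chevalley invariant produced above lands in $V_{\can,n}^* t^k$ with $k > r(n+1) + n$, not merely with $k > s(n+1) + n$. The gap $s - r$ is controlled only by the denominators of Moy-Prasad jumps (cf.\ the remark after Lemma \ref{rationality} about divisibility by fundamental degrees), so the argument must exploit the numerical coincidence that depth-jumps and slope-jumps occur at the same set of rationals. I expect to address this by refining the Hilbert-Mumford step of Lemma \ref{input} to track the conformal weight of the destabilizing one-parameter subgroup, and to show that any semistable symbol at depth $s$ carries a nonzero Chevalley invariant whose conformal bidegree lies \emph{exactly} at slope $s$ in the sense of \eqref{explicit}. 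A secondary technical point is upgrading set-theoretic vanishing to the cotensor-product formalism of \cite{semiinf}, which I would handle by first working with nilpotent thickenings of $\Op_{\check{\mathfrak{g}}}^{\leq r}$ and passing to the limit.
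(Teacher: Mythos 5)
Your two-inclusion structure is equivalent to the paper's approach, which instead proves (as Theorem \ref{lemmadepthKM}) that the coaction on $\hat{\mathfrak{g}}_{\crit}\operatorname{-mod}_{=r}$ factors through $\IndCoh^*(\widehat{\Op_{\check{\mathfrak{g}}}^{=r}})$ and then does an induction on depth-jumps via Lemma \ref{inductionaux}; your first direction matches the paper's ``upper bound on depth'' step and is essentially Chen-Kamgarpour, as you say. The core of the hard direction is also the same idea (producing a slope-$s$ Chevalley invariant that detects semistability), but you have misidentified where the difficulty lies, and you have left out the step that the paper spends the most effort on.

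First, the bidegree bookkeeping you flag as ``the main obstacle'' is automatic and is not an obstacle at all. Any generator of the space $W_s = \Op_{\check{\mathfrak{g}}}^{\leq s}/\Op_{\check{\mathfrak{g}}}^{<s}$ lies, by equation (\ref{explicit}), in some $V_{\can,n}^*t^k$ with $k = s(n+1)+n$. For $s > r$ this gives $k > r(n+1)+n$ immediately, so membership in $V_{\can}^*((t))_{>r}$ is a one-line arithmetic consequence and requires no refinement of Lemma \ref{input} and no tracking of the conformal weight of the destabilizing one-parameter subgroup. The actual difficulty, which your sketch passes over in the parenthetical ``(lifting to one of $G$)'', is to show that an $L_x$-semistable point of $(K_{x,s}/K_{x,s+})^*$ has nonvanishing image in $W_s$. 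Semistability gives you a nonzero image in $\mathfrak{g}_1^*/\!\!/ G_0$ (where $\mathfrak{g}_1$ is $K_{x,s}/K_{x,s+}$ in the Vinberg grading and $G_0 = L_x$); that is the Hilbert-Mumford content and it is already in Lemma \ref{input}. What you still need is that the map $\mathfrak{g}_1^*/\!\!/ G_0 \to W_s$ has zero fiber only over the origin. The paper proves this by showing the composite $\mathfrak{g}_1^*/\!\!/ G_0 \to \mathfrak{g}^*/\!\!/ G$ is finite, which it deduces from Theorem 7 of \cite{Vinberg} (the restriction of the Chevalley map to a Cartan subspace $\mathfrak{c}\subseteq\mathfrak{g}_1$ is a finite quotient by a finite group). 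No amount of sharpening Hilbert-Mumford will produce this finiteness; it is a genuinely separate invariant-theoretic input, and your proposal does not name it.

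Second, to turn the symbol computation into a statement about the module category, you need to know that $\hat{\mathfrak{g}}_{\crit}\operatorname{-mod}^{K_{x,s+},\circ}$ is generated under colimits by modules of the form $\operatorname{Ind}_{\mathfrak{k}_{x,s}}^{\hat{\mathfrak{g}}}\mathcal{F}$ with $\mathcal{F}\in\QCoh(U_i)^{\heartsuit}$ for an affine open cover of the semistable locus. Without this, your statement that ``the Fourier-dualized action records the leading symbol of the Kac-Moody action'' does not apply to a general object of the category, and the dichotomy you want (central element invertible on one side, pro-nilpotent on the other) has nothing to latch on to. This reduction is nontrivial: it uses an inflation functor, Lemma \ref{appendixtensor}, and conservativity of $\operatorname{Res}$, and the paper devotes Section \ref{KMstepgenerators} to it. Once you have the generators, the argument then really does hinge on the inclusion $V^*_{\can}((t))_{\geq s}\subseteq U(\hat{\mathfrak{g}})_{\crit}\mathfrak{k}_{x,s+}+U(\mathfrak{k}_{x,s})$, which induces the concrete map $f:(K_{x,s}/K_{x,s+})^*\to W_s$; your phrasing in terms of a central element ``acting invertibly on the $\circ$-locus'' is a reasonable gloss on this but should be made precise through $f$. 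Finally, for the passage from an abelian-category statement to the $\IndCoh^*$-cotensor formalism, the paper uses that the coaction of an induced module lands in the heart (Lemma 11.13.1 of \cite{semiinf}); your plan of taking nilpotent thickenings would also work but is extra machinery you do not need once you know the generators sit in the heart.
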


\subsection{Reduction step}

Let us start by reducing the theorem to an auxillary statement. To formulate it, note that, as the map $\IndCoh^*(\widehat{\Op_{\check{\mathfrak{g}}}^{=r}})\rightarrow\IndCoh^*(\Op_{\check{\mathfrak{g}}})$ is a composition of a fully faithful right adjoint and a fully faithful left adjoint, it remains fully faithful upon tensoring with any category.

\begin{theorem}\label{lemmadepthKM}
The coaction of $\IndCoh^*(\Op_{\check{\mathfrak{g}}})$ on $\hat{\mathfrak{g}}_{\crit}\operatorname{-mod}_{=r}$ factors through

$$\hat{\mathfrak{g}}_{\crit}\operatorname{-mod}_{=r}\rightarrow\hat{\mathfrak{g}}_{\crit}\operatorname{-mod}_{=r}\otimes\IndCoh^*(\widehat{\Op_{\check{\mathfrak{g}}}^{=r}}),$$

\noindent where $\widehat{\Op_{\check{\mathfrak{g}}}^{=r}}$ is the complement of $\widehat{\Op_{\check{\mathfrak{g}}}^{<r}}$ inside $\widehat{\Op_{\check{\mathfrak{g}}}^{\leq r}}$.
\end{theorem}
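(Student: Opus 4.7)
The plan is to combine the generation theorem of Section 3 with the Feigin-Frenkel compatibility \eqref{compatibility} to reduce the theorem to an explicit matching between the Moy-Prasad weight decomposition of $\mathfrak{g}((t))$ and the slope filtration \eqref{explicit} on opers.

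First I would observe that factoring the coaction through $\IndCoh^*(\widehat{\Op_{\check{\mathfrak{g}}}^{=r}})$ amounts to an object-level set-theoretic support condition. The full subcategory of $\hat{\mathfrak{g}}_{\crit}\operatorname{-mod}_{=r}$ cut out by this condition is closed under colimits and shifts, and since the coaction commutes with the $G((t))$-action it is also $G((t))$-stable. By the construction of the depth filtration via the functors $a_{x,r}$ together with Theorem \ref{generation}, the subcategory $\hat{\mathfrak{g}}_{\crit}\operatorname{-mod}_{=r}$ is colimit-generated by the images of $\hat{\mathfrak{g}}_{\crit}\operatorname{-mod}^{K_{x,r+},\circ}$ as $x$ ranges over $X_*(T)\otimes\mathbb{R}$, so it suffices to verify the support condition on these generators.

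Fix such an $x$. On a $K_{x,r+}$-invariant module the subspace $\mathfrak{k}_{x,r+}\subseteq U(\hat{\mathfrak{g}})_{\crit}$ acts topologically nilpotently, so in the associated graded of the PBW filtration the action of $\Sym(\mathfrak{g}((t)))$ is set-theoretically supported on $\mathfrak{k}_{x,r+}^{\perp}\subseteq\mathfrak{g}^*((t))$. Via \eqref{compatibility}, the induced action of $\mathcal{O}(\Op_{\check{\mathfrak{g}}})$ is then supported on the image of $\mathfrak{k}_{x,r+}^{\perp}$ under the Kostant-Chevalley map $\mathfrak{g}^*((t))\to V_{\can}((t))$; a direct weight-by-weight computation, matching the Moy-Prasad inequality $\langle\alpha,x\rangle+i>r$ against the slope inequality $k>r(n+1)+n$ from \eqref{explicit}, shows this image lies in $\Op_{\check{\mathfrak{g}}}^{\leq r}$. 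To upgrade support from $\widehat{\Op_{\check{\mathfrak{g}}}^{\leq r}}$ to $\widehat{\Op_{\check{\mathfrak{g}}}^{=r}}$ I would use the semistability encoded by the ${}^{\circ}$: by Kostant's theorem the semistable locus in $\mathfrak{g}^*$ is exactly the nonvanishing locus of the Chevalley invariants, so on the associated-graded level the support of a module in $\hat{\mathfrak{g}}_{\crit}\operatorname{-mod}^{K_{x,r+},\circ}$ hits some component $V_{\can,n}^{*} t^k$ at the boundary $k=r(n+1)+n$, which excludes $\widehat{\Op_{\check{\mathfrak{g}}}^{<r}}$.

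The main obstacle will be the explicit dictionary in the third paragraph. Concretely, one must combine the identification $V_{\can,n}^{*}\leftrightarrow$ (degree $n{+}1$ Kostant invariant) with the Feigin-Frenkel comparison of the PBW degree and the conformal weight, and then check that the Moy-Prasad ``leading term'' of an element of $\mathfrak{k}_{x,r+}^{\perp}$ that is semistable for the $L_x$-action on $\mathfrak{k}_{x,r}/\mathfrak{k}_{x,r+}$ produces a Chevalley invariant of the correct bidegree to land in the boundary $k=r(n+1)+n$ for some $n$, uniformly in the weight data. This is a finite calculation with rational data, essentially a categorified version of the explicit slope formula given in Section 2 of \cite{CK}; once it is in hand, combining the two supports gives the desired factoring.
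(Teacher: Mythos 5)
Your overall strategy matches the paper's: apply Theorem~\ref{generation} to reduce to the generators $\hat{\mathfrak{g}}_{\crit}\operatorname{-mod}^{K_{x,r+},\circ}$, then use the Feigin-Frenkel compatibility with the PBW filtration and conformal grading to bound the central support. The upper bound you sketch (topological nilpotence of $\mathfrak{k}_{x,r+}$ forces $\gr$-support in $\mathfrak{k}_{x,r+}^{\perp}$, then push forward under the Kostant-Chevalley map) is a slightly different route from the paper, which instead writes a central element of $V_{\can}^*((t))_{>r}$ as a sum of ordered PBW monomials, reorders so the rightmost factor lands in $\mathfrak{k}_{x,r+}$, and uses centrality plus the surjection $U(\hat{\mathfrak{g}})\otimes\mathcal{F}\twoheadrightarrow\operatorname{Ind}\mathcal{F}$ to conclude the element annihilates the whole module. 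Both are reasonable; the paper's version sidesteps lifting a $\gr$-level support statement back to the filtered level.

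The genuine gap is in the last paragraph, the step that excludes $\widehat{\Op_{\check{\mathfrak{g}}}^{<r}}$. You assert that semistability of the central character in $(K_{x,r}/K_{x,r+})^{*}$ for the $L_x$-action forces the image in $W=\Op_{\check{\mathfrak{g}}}^{\leq r}/\Op_{\check{\mathfrak{g}}}^{<r}$ to miss the origin, and you describe verifying this as a "finite calculation with rational data." It is not. The semistable locus here is for the $L_x\cong G_0$-action on $\mathfrak{g}_1$ (a Vinberg $\theta$-group), while the map to $W$ factors as $\mathfrak{g}_1^*\to\mathfrak{g}_1^*//G_0\to\mathfrak{g}^*//G$. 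Semistability of a point only tells you it doesn't map to $0\in\mathfrak{g}_1^*//G_0$; to know it avoids $0\in W$, you need the composite $\mathfrak{g}_1^*//G_0\to\mathfrak{g}^*//G$ to have $0$ as the only point of its zero fiber, and for this the paper invokes Vinberg's theorem that a Cartan subspace of $\mathfrak{g}_1$ surjects finitely onto $\mathfrak{g}_1^*//G_0$, which then maps by a closed embedding composed with a finite map into $\mathfrak{g}^*//G$. Without this finiteness input the implication ``$L_x$-semistable $\Rightarrow$ nonzero in $W$'' simply doesn't follow, and no amount of weight-by-weight bookkeeping recovers it; it is a structural fact about graded simple Lie algebras. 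You would also want to handle the $r=0$ case separately, since there is no semistability condition to use, though this is a minor omission compared to the Vinberg input.
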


\begin{corollary}\label{cordepthKM}
The map

$$\hat{\mathfrak{g}}_{\crit}\operatorname{-mod}_{=s}\rightarrow\hat{\mathfrak{g}}_{\crit}\operatorname{-mod}_{=s}\overset{\IndCoh^*(\Op_{\check{\mathfrak{g}}})}{\otimes}\IndCoh^*(\widehat{\Op_{\check{\mathfrak{g}}}^{\leq r}})$$

\noindent given by cotensoring with the map $\IndCoh^*(\Op_{\check{\mathfrak{g}}})\rightarrow\IndCoh^*(\widehat{\Op_{\check{\mathfrak{g}}}^{\leq r}})$ is an equivalence if $s\leq r$, and zero if $s > r$.
\end{corollary}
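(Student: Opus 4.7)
The plan is to deduce the corollary from Theorem \ref{lemmadepthKM} by rewriting the cotensor product as a purely formal-geometric computation. Theorem \ref{lemmadepthKM} tells us that the coaction of $\IndCoh^*(\Op_{\check{\mathfrak{g}}})$ on $\hat{\mathfrak{g}}_{\crit}\operatorname{-mod}_{=s}$ factors through the fully faithful embedding $\IndCoh^*(\widehat{\Op_{\check{\mathfrak{g}}}^{=s}})\hookrightarrow\IndCoh^*(\Op_{\check{\mathfrak{g}}})$. Using associativity of cotensor products, this allows the target of our map to be rewritten as
$$\hat{\mathfrak{g}}_{\crit}\operatorname{-mod}_{=s}\overset{\IndCoh^*(\widehat{\Op_{\check{\mathfrak{g}}}^{=s}})}{\otimes}\Bigl(\IndCoh^*(\widehat{\Op_{\check{\mathfrak{g}}}^{=s}})\overset{\IndCoh^*(\Op_{\check{\mathfrak{g}}})}{\otimes}\IndCoh^*(\widehat{\Op_{\check{\mathfrak{g}}}^{\leq r}})\Bigr),$$
so the question is reduced to understanding the inner cotensor.

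Next, I would identify this inner cotensor with $\IndCoh^*$ of the fiber product $\widehat{\Op_{\check{\mathfrak{g}}}^{=s}}\times_{\Op_{\check{\mathfrak{g}}}}\widehat{\Op_{\check{\mathfrak{g}}}^{\leq r}}$, at which point the result follows from a geometric observation. If $s\leq r$, then $\Op_{\check{\mathfrak{g}}}^{=s}\subseteq\Op_{\check{\mathfrak{g}}}^{\leq s}\subseteq\Op_{\check{\mathfrak{g}}}^{\leq r}$, so the map $\widehat{\Op_{\check{\mathfrak{g}}}^{=s}}\to\Op_{\check{\mathfrak{g}}}$ already factors through $\widehat{\Op_{\check{\mathfrak{g}}}^{\leq r}}$, and the fiber product collapses to $\widehat{\Op_{\check{\mathfrak{g}}}^{=s}}$ itself. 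The outer cotensor then becomes the identity, yielding the asserted equivalence. If instead $s>r$, then the reduced underlying subschemes $\Op_{\check{\mathfrak{g}}}^{=s}$ and $\Op_{\check{\mathfrak{g}}}^{\leq r}$ are disjoint (since $\Op_{\check{\mathfrak{g}}}^{\leq r}\subseteq\Op_{\check{\mathfrak{g}}}^{<s}$), so the fiber product of the formal completions has empty reduction, its category of ind-coherent sheaves is zero, and the cotensor vanishes.

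The main obstacle I expect is justifying the identification of the inner cotensor with $\IndCoh^*$ of the fiber product, given that $\Op_{\check{\mathfrak{g}}}$ is a complicated ind-pro scheme and that $\widehat{\Op_{\check{\mathfrak{g}}}^{\leq r}}$ and $\widehat{\Op_{\check{\mathfrak{g}}}^{=s}}$ are not of finite type. This is precisely a base-change statement for $\IndCoh^*$, which should follow from the formalism developed in \cite{semiinf}, but extracting it cleanly may require some care. A secondary technical point is confirming that $\widehat{\Op_{\check{\mathfrak{g}}}^{=s}}$, defined as the open complement of $\widehat{\Op_{\check{\mathfrak{g}}}^{<s}}$ inside $\widehat{\Op_{\check{\mathfrak{g}}}^{\leq s}}$, is sufficiently well-behaved to participate in these fiber-product manipulations; since it arises by removing a closed formal subscheme, I expect this to cause no essential difficulty.
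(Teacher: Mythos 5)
Your proposal follows the same route as the paper: invoke Theorem \ref{lemmadepthKM} to descend the comodule structure to $\IndCoh^*(\widehat{\Op_{\check{\mathfrak{g}}}^{=s}})$, rewrite via associativity of the cotensor, and then observe the inner cotensor is $\IndCoh^*(\widehat{\Op_{\check{\mathfrak{g}}}^{=s}})$ for $s\leq r$ and vanishes for $s>r$. The paper simply asserts this last step as a fact, whereas you flesh it out via the fiber product of formal completions; that is a welcome elaboration and is correct.
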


\begin{proof}
Theorem \ref{lemmadepthKM} implies that the $\IndCoh^*(\Op_{\check{\mathfrak{g}}})$-comodule structure on $\hat{\mathfrak{g}}_{\crit}\operatorname{-mod}_{=s}$ descends to a structure of $\IndCoh^*(\widehat{\Op_{\check{\mathfrak{g}}}^{=s}})$-comodule. We can thus rewrite the map of interest as

$$\hat{\mathfrak{g}}_{\crit}\operatorname{-mod}_{=s}\rightarrow\hat{\mathfrak{g}}_{\crit}\operatorname{-mod}_{=s}\overset{\IndCoh^*(\widehat{\Op_{\check{\mathfrak{g}}}^{=s}})}{\otimes}\IndCoh^*(\widehat{\Op_{\check{\mathfrak{g}}}^{=s}})\overset{\IndCoh^*(\Op_{\check{\mathfrak{g}}})}{\otimes}\IndCoh^*(\widehat{\Op_{\check{\mathfrak{g}}}^{\leq r}}).$$

The corollary now follows from the fact that

$$\IndCoh^*(\widehat{\Op_{\check{\mathfrak{g}}}^{=s}})\overset{\IndCoh^*(\Op_{\check{\mathfrak{g}}})}{\otimes}\IndCoh^*(\widehat{\Op_{\check{\mathfrak{g}}}^{\leq r}})$$

\noindent is trivial for $s>r$, and isomorphic to $\IndCoh^*(\widehat{\Op_{\check{\mathfrak{g}}}^{=s}})$ for $s\leq r.$
\end{proof}

To prove Theorem \ref{depthKM} assuming Theorem \ref{lemmadepthKM}, we show that we have a natural equivalence of subcategories, for all $s\in\mathbb{R}$,

$$\hat{\mathfrak{g}}_{\crit}\operatorname{-mod}_{\leq\operatorname{min}(r,s)}\cong\hat{\mathfrak{g}}_{\crit}\operatorname{-mod}_{\leq s}\overset{\IndCoh^*(\Op_{\check{\mathfrak{g}}})}{\otimes}\IndCoh^*(\widehat{\Op_{\check{\mathfrak{g}}}^{\leq r}}).$$

Taking the colimit as $s\to\infty$, we get the desired statement. (Here we rewrite the colimit as a limit along right adjoints to commute it with the cotensor product on the RHS.) We prove this statement by induction on $s$, recalling that the depth filtration is discretely indexed. The base case, of $s$ negative, is obvious. Assume it holds for all $s' < s$.

We split into two cases, of $s \leq r$ and $s > r$. Let us treat the $s\leq r$ case; the other one is similar and is left to the reader. We invoke the following commutative diagram:

\begin{equation*}
\begin{tikzcd}
\hat{\mathfrak{g}}_{\crit}\operatorname{-mod}_{< s} \arrow[d] \arrow[r] & \hat{\mathfrak{g}}_{\crit}\operatorname{-mod}_{< s}\overset{\IndCoh^*(\Op_{\check{\mathfrak{g}}})}{\otimes}\IndCoh^*(\widehat{\Op_{\check{\mathfrak{g}}}^{\leq r}}) \arrow[d] \\
\hat{\mathfrak{g}}_{\crit}\operatorname{-mod}_{\leq s} \arrow[r] & \hat{\mathfrak{g}}_{\crit}\operatorname{-mod}_{\leq s}\overset{\IndCoh^*(\Op_{\check{\mathfrak{g}}})}{\otimes}\IndCoh^*(\widehat{\Op_{\check{\mathfrak{g}}}^{\leq r}})\\
\hat{\mathfrak{g}}_{\crit}\operatorname{-mod}_{= s} \arrow[u] \arrow[r] & \hat{\mathfrak{g}}_{\crit}\operatorname{-mod}_{= s}\overset{\IndCoh^*(\Op_{\check{\mathfrak{g}}})}{\otimes}\IndCoh^*(\widehat{\Op_{\check{\mathfrak{g}}}^{\leq r}}) \arrow[u].
\end{tikzcd}
\end{equation*}

The top arrow is an equivalence, by the induction hypothesis, and the bottom arrow is an equivalence by Corollary \ref{cordepthKM}. The vertical arrows are all fully faithful, and by Lemma \ref{inductionaux}, the middle row is generated by the essential images of the top and bottom row, so the middle arrow is also an equivalence, as desired.

Our only remaining task is to prove Theorem \ref{lemmadepthKM}, which will take up most of the rest of this section.

\subsection{Proof of Theorem \ref{lemmadepthKM}}

\subsubsection{Generators for $\hat{\mathfrak{g}}_{\crit}\operatorname{-mod}^{K_{x,r+},\circ}$}
\label{KMstepgenerators}

\begin{proof}
Start by assuming $r>0$; the modifications necessary for the $r=0$ case will be explained at the end of the proof.

By Theorem \ref{generation} it suffices to show that, for any $x\in X_*(T)\otimes\mathbb{R},$ the coaction of $\IndCoh^*(\Op_{\check{\mathfrak{g}}})$ maps $\hat{\mathfrak{g}}_{\crit}\operatorname{-mod}^{K_{x,r+},\circ}$ into $\hat{\mathfrak{g}}_{\crit}\operatorname{-mod}\otimes\IndCoh^*(\widehat{\Op_{\check{\mathfrak{g}}}^{=r}}).$ Furthermore, it suffices to check this on a set of generators for $\hat{\mathfrak{g}}_{\crit}\operatorname{-mod}^{K_{x,r+},\circ}.$

We claim that for any cover of $(K_{x,r}/K_{x,r+})^{*,\circ}$ by affine opens $U_i$, $\hat{\mathfrak{g}}_{\crit}\operatorname{-mod}^{K_{x,r+},\circ}$ is generated by the modules $\operatorname{Ind}_{\mathfrak{k}_{x,r}}^{\hat{\mathfrak{g}}}\mathcal{F}$, with $\mathcal{F}$ ranging over all objects in the $\QCoh(U_i)^{\heartsuit}$. (Here the $\mathfrak{k}_{x,r}$ action is through $\mathfrak{k}_{x,r}/\mathfrak{k}_{x,r+}\cong K_{x,r}/K_{x,r+}$.) We will later choose specific $U_i$ --- until then, all our statements will hold for any affine open cover.

We show this by iteratively producing generators for various categories.\footnote{The functors we use are constructed in Section 5 of \cite{gurbirjustin}. This section only appears in the original arxiv version; it is no longer needed in the current version (which also adds Sam Raskin to the list of authors). We thank Gurbir Dhillon for pointing us to this reference.} First we note that $\QCoh(U_i)$ is generated by $\QCoh(U_i)^{\heartsuit}$ and $\QCoh((K_{x,r}/K_{x,r+})^{*,\circ})$ is generated by the $\QCoh(U_i)$. Thus, $\QCoh((K_{x,r}/K_{x,r+})^{*,\circ})$ is generated by the $\mathcal{F}$.

Next we produce generators for the category $\mathfrak{k}_{x,r}\operatorname{-mod}^{K_{x,r+},\circ}$, defined via 
$$\mathfrak{k}_{x,r}\operatorname{-mod}^{K_{x,r+},\circ}\cong\mathfrak{k}_{x,r}\operatorname{-mod}^{K_{x,r+}}\otimes_{D((K_{x,r}/K_{x,r+})^*)}D((K_{x,r}/K_{x,r+})^{*,\circ}).$$

We have an inflation functor

$$\QCoh((K_{x,r}/K_{x,r+})^{*})\cong\mathfrak{k}_{x,r}/\mathfrak{k}_{x,r+}\operatorname{-mod}\rightarrow\mathfrak{k}_{x,r}\operatorname{-mod}^{K_{x,r+}},$$

\noindent given by restriction along the morphism $\mathfrak{k}_{x,r}\rightarrow\mathfrak{k}_{x,r}/\mathfrak{k}_{x,r+}$, and the image of this functor generates $\mathfrak{k}_{x,r}\operatorname{-mod}^{K_{x,r+}}$. As having generating image for a functor is equivalent to having trivial quotient, this property is preserved under tensor products. Using Lemma \ref{appendixtensor}, which gives $D((K_{x,r}/K_{x,r+})^{*,\circ})\otimes_{D((K_{x,r}/K_{x,r+})^*)}\QCoh((K_{x,r}/K_{x,r+})^*)\cong\QCoh((K_{x,r}/K_{x,r+})^{*,\circ})$, the image of the functor

$$\QCoh((K_{x,r}/K_{x,r+})^{*,\circ})\rightarrow\mathfrak{k}_{x,r}\operatorname{-mod}^{K_{x,r+},\circ}$$

\noindent also generates.

Finally, we have a $K_{x,r}$-equivariant pair of adjoint functors

\begin{equation*}
\begin{tikzcd}
\mathfrak{k}_{x,r}\operatorname{-mod}\arrow[r, shift left=.75ex, "\operatorname{Ind}"] & \hat{\mathfrak{g}}_{\crit}\operatorname{-mod}\arrow[l, shift left=.75ex, "\operatorname{Res}"]  
\end{tikzcd}
\end{equation*}

\noindent which induces an adjoint pair

\begin{equation*}
\begin{tikzcd}
\mathfrak{k}_{x,r}\operatorname{-mod}^{K_{x,r+},\circ}\arrow[r, shift left=.75ex, "\operatorname{Ind}'"] & \hat{\mathfrak{g}}_{\crit}\operatorname{-mod}^{K_{x,r+},\circ}\arrow[l, shift left=.75ex, "\operatorname{Res}'"].
\end{tikzcd}
\end{equation*}

As before, to show that $\operatorname{Ind}'$ has generating image, it suffices to show the same for $\operatorname{Ind}$. This in turn is equivalent to conservativity of $\operatorname{Res}$, which is proven in loc.cit. Combining our generation statements, we see that the $\operatorname{Ind}_{\mathfrak{k}_{x,r}}^{\hat{\mathfrak{g}}}\mathcal{F}$ generate, as desired.

Therefore, it suffices to know that the $\operatorname{coact}(\operatorname{Ind}_{\mathfrak{k}_{x,r}}^{\hat{\mathfrak{g}}}\mathcal{F})$ lies inside \\ $\hat{\mathfrak{g}}_{\crit}\operatorname{-mod}\otimes\IndCoh^*(\widehat{\Op_{\check{\mathfrak{g}}}^{=r}}).$ By Lemma 11.13.1 of \cite{semiinf}, we know that that $\operatorname{coact}(\operatorname{Ind}_{\mathfrak{k}_{x,r}}^{\hat{\mathfrak{g}}}\mathcal{F})\in(\hat{\mathfrak{g}}_{\crit}\operatorname{-mod}\otimes\IndCoh^*(\Op_{\check{\mathfrak{g}}}))^{\heartsuit},$ so we can use abelian-categorical methods.

\subsubsection{Upper bound on depth}
\label{KMstepupper}

We start by showing that $\operatorname{coact}(\operatorname{Ind}_{\mathfrak{k}_{x,r}}^{\hat{\mathfrak{g}}}\mathcal{F})\in(\hat{\mathfrak{g}}_{\crit}\operatorname{-mod}\otimes\IndCoh^*(\Op_{\check{\mathfrak{g}}}^{\leq r}))^{\heartsuit}.$ As we are in abelian-category land, it suffices to show this after forgetting the $\hat{\mathfrak{g}}_{\crit}$-module structure. Concretely, we need to prove that the action of $V^*_{\can}((t))_{>r}\subseteq Z(U(\hat{\mathfrak{g}})_{\crit})$ on $\operatorname{Ind}_{\mathfrak{k}_{x,r}}^{\hat{\mathfrak{g}}}\mathcal{F}$ is trivial.

Pick positive integers $i$ and $j$ with $j>r\cdot i.$ Interpreting $F_iV^*_{\can}((t))^j$ as a subspace of $Z(U(\hat{\mathfrak{g}})_{\crit}),$ we can write any element $a\in F_iV^*_{\can}((t))^j$ as a sum of monomials

$$a=\sum_d c_d\cdot a_{d,1}t^{b_{d,1}}\cdot a_{d,2}t^{b_{d,2}}\cdots a_{d,n_d}t^{b_{d,n_d}},$$

\noindent where each $c_d$ is an element of $k$ and each $a_{d,e}$ is an element of some $\mathfrak{g}_{\alpha_{d,e}}$. For any $d$, the compatibilities described earlier tell us that $n_d\leq i$ and $b_{d,1}+b_{d,2}+\cdots+b_{d,n_d}=j,$ and the centrality of $a$ implies that $\displaystyle\sum_e\alpha_{d,e}=0$. Furthermore, by applying commutation relations, we can choose the monomials so that

$$\langle\alpha_{d,1},x\rangle+b_{d,1}\leq\langle\alpha_{d,2},x\rangle+b_{d,2}\leq\cdots\leq\langle\alpha_{d,n_d},x\rangle+b_{d,n_d}.$$

As we have

$$\sum_e\langle\alpha_{d,e},x\rangle+b_{d,e}=\langle 0,x\rangle+j=j,$$

\noindent this implies that

$$\langle\alpha_{d,n_d},x\rangle+b_{d,n_d}\geq\frac{j}{i} > r,$$

\noindent or equivalently,

$$a_{d,n_d}t^{b_{d,n_d}}\in\mathfrak{k}_{x,r+}.$$

Therefore, $V^*_{\can}((t))_{>r}\subseteq U(\hat{\mathfrak{g}})_{\crit}\mathfrak{k}_{x,r+}.$ But this immediately implies that any element $a\in V^*_{\can}((t))_{>r}$ acts trivially on $\mathcal{F}\subseteq\operatorname{Ind}_{\mathfrak{k}_{x,r}}^{\hat{\mathfrak{g}}}\mathcal{F}.$ Because $a$ is central and $U(\hat{\mathfrak{g}})\otimes\mathcal{F}\rightarrow\operatorname{Ind}_{\mathfrak{k}_{x,r}}^{\hat{\mathfrak{g}}}\mathcal{F}$ is surjective, $a$ acts trivially on the entirety of $\operatorname{Ind}_{\mathfrak{k}_{x,r}}^{\hat{\mathfrak{g}}}\mathcal{F}$.

\subsubsection{Lower bound on depth}
\label{lowerbound}

Let $W$ be the affine space $\Op_{\check{\mathfrak{g}}}^{\leq r}/\Op_{\check{\mathfrak{g}}}^{< r}$. Choose a covering of the complement of the origin $W\setminus 0$ by open affines $U_i'$. Now we claim we can choose the original open cover $U_i$ so that, for $\mathcal{F}\in\QCoh(U_i)^{\heartsuit}$, we have

$$\operatorname{coact}(\operatorname{Ind}_{\mathfrak{k}_{x,r}}^{\hat{\mathfrak{g}}}\mathcal{F})\in(\hat{\mathfrak{g}}_{\crit}\operatorname{-mod}\otimes\IndCoh^*(\Op_{\check{\mathfrak{g}}}^{\leq r}\times_{W}U'_i))^{\heartsuit},$$

\noindent which will imply Theorem \ref{lemmadepthKM} and hence Theorem \ref{depthKM}. To define $U_i$ and prove this containment, we repeat the previous analysis for $V^*_{\can}((t))_{\geq r}\subseteq Z(U(\hat{\mathfrak{g}})_{\crit})$ in place of $V^*_{\can}((t))_{>r}.$ We keep the notation from the previous section. For a monomial $a_{d,1}t^{b_{d,1}}\cdot a_{d,2}t^{b_{d,2}}\cdots a_{d,n_d}t^{b_{d,n_d}}$ to not act trivially on $\operatorname{Ind}_{\mathfrak{k}_{x,r}}^{\hat{\mathfrak{g}}}\mathcal{F}$, each of the inequalities in the previous section must be an equality. Namely, we must have, for all $d$ and $e$, $n_d=i$ and

$$\langle\alpha_{d,e},x\rangle+b_{d,e}=\frac{j}{i}=r,$$

\noindent which implies that we have $a_{d,e}t^{b_{d,e}}\in\mathfrak{k}_{x,r}.$ In other words, we have an inclusion $V^*_{\can}((t))_{\geq r}\subseteq U(\hat{\mathfrak{g}})_{\crit}\mathfrak{k}_{x,r+}+U(\mathfrak{k}_{x,r})$, which induces a map of algebras

$$\operatorname{Sym}(V^*_{\can}((t))_{\geq r}/V^*_{\can}((t))_{>r})\rightarrow\operatorname{Sym}(\mathfrak{k}_{x,r}/\mathfrak{k}_{x,r+})$$

Noting that

$$W\cong\operatorname{Spec}\operatorname{Sym}(V^*_{\can}((t))_{\geq r}/V^*_{\can}((t))_{>r}),$$

\noindent this becomes a map of varieties $f:(K_{x,r}/K_{x,r+})^*\rightarrow W,$ and the $\mathcal{O}(W)$-module structure on $\mathcal{F}\subseteq\operatorname{Ind}_{\mathfrak{k}_{x,r}}^{\hat{\mathfrak{g}}}\mathcal{F}$ corresponds to the pushforward $f_*\mathcal{F}$. This shows our claim for $U_i=f^{-1}(U'_i)$, once we show that the $U_i$ are an open cover of $(K_{x,r}/K_{x,r+})^{*,\circ}$. Equivalently, we need to show the equality of closed subvarieties $f^{-1}(0)=(K_{x,r}/K_{x,r+})^*_{us}.$

To analyze $f$, we reinterpret our objects in terms of graded Lie algebras. Write $r$ as a fraction $\frac{p}{q}$ with $p,q$ relatively prime positive integers. As in (\ref{explicit}), we have

\begin{align*}
V_{\can}^*((t))_{>r}=&\displaystyle\bigoplus_{n,k\in\mathbb{Z}_{\geq 0}\mid k>r(n+1)+n}V_{\can,n}^*((t))t^k\\
V_{\can}^*((t))_{\geq r}=&\displaystyle\bigoplus_{n,k\in\mathbb{Z}_{\geq 0}\mid k\geq r(n+1)+n}V_{\can,n}^*((t))t^k\\
V_{\can}^*((t))_{\geq r}/V_{\can}^*((t))_{>r}\cong&\displaystyle\bigoplus_{n,k\in\mathbb{Z}_{\geq 0}\mid k=r(n+1)+n}V_{\can,n}^*((t))t^k\\
\cong&\displaystyle\bigoplus_{n\in\mathbb{Z}_{\geq 0}\mid n\equiv -1\pmod{q}} V^*_{\can,n}.
\end{align*}

Dualizing, we have an identification $W\cong V_{\can,q-1}\oplus V_{\can,2q-1}\oplus\cdots$. For $i=0,1,\cdots, q-1$, define

$$\mathfrak{g}_i=\displaystyle\bigoplus_{\langle x,\alpha\rangle\equiv\frac{mi}{n}\pmod{1}}\mathfrak{g}_{\alpha}.$$

This gives a grading on a Lie algebra $\mathfrak{g}_0\oplus\cdots\oplus\mathfrak{g}_{q-1}$ (possibly non-isomorphic to $\mathfrak{g}$.) Note that we can identify $K_{x,r}/K_{x,r+}$ with $\mathfrak{g}_1$ and $L_x$ with the exponentiation $G_0$ of $\mathfrak{g}_0$, and under this identification, the $L_x$ action on $K_{x,r}/K_{x,r+}$ becomes the adjoint action of $G_0$ on $\mathfrak{g}_1$. In this language, compatibility of Feigin-Frenkel with the symbol map (see the discussion surrounding (\ref{compatibility})) becomes commutativity of the following diagram, where the bottom map (denoted $i$) is induced from the inclusion $\mathfrak{g}^{*}_1\rightarrow\mathfrak{g}^*$:

\begin{equation*}
\begin{tikzcd}
\mathfrak{g}^{*}_1 \arrow[r, "f"] \arrow[d, "g"] & W \arrow[d] \\
\mathfrak{g}^{*}_1//G_0 \arrow[r, "i"] & \mathfrak{g}^*//G.
\end{tikzcd}
\end{equation*}

As $W\rightarrow\mathfrak{g}^*//G\cong V_{\can}$ is a closed embedding, we see that $f$ factors as the composition

\begin{equation*}
\begin{tikzcd}
\mathfrak{g}^{*}_1\arrow[r, "g"] & \mathfrak{g}^{*}_1//G_0 \arrow[r, "h"] & W.
\end{tikzcd}
\end{equation*}

By generalities on GIT quotients, the preimage $g^{-1}(0)$ can be identified with $(\mathfrak{g}_1)^*_{us}.$ Therefore, it suffices to show that $h^{-1}(0)=0$. We show:

\begin{lemma}
The map $h$ is finite. In particular, as the only finite $\mathbb{G}_m$-invariant subvariety of $\mathfrak{g}^{*}_1//G_0$ is the origin, $h^{-1}(0)=0$.
\end{lemma}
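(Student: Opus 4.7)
The plan is to deduce finiteness of $h$ from a classical theorem of Vinberg on $\mathbb{Z}/q$-graded reductive Lie algebras (sometimes called the Vinberg--Kostant--Rallis theorem, or the theory of $\theta$-groups). Applied to the graded Lie algebra $\bigoplus_{i=0}^{q-1} \mathfrak{g}_i$ with degree-zero subgroup $G_0$, Vinberg's theorem asserts that the restriction-of-invariants map
\begin{equation*}
k[\mathfrak{g}^*]^G \longrightarrow k[\mathfrak{g}_1^*]^{G_0}
\end{equation*}
is a finite ring extension; equivalently, the induced morphism $\mathfrak{g}_1^*//G_0 \to \mathfrak{g}^*//G \cong V_{\can}$ is finite. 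This is the only nontrivial input I plan to use.

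The commutative diagram immediately preceding the lemma identifies this Vinberg map with the composite $\mathfrak{g}_1^*//G_0 \xrightarrow{h} W \hookrightarrow V_{\can}$. Now $W \subseteq V_{\can}$ is the graded subspace $\bigoplus_{n \equiv -1 \pmod q} V_{\can,n}$, so the inclusion is a closed embedding of (finite-dimensional) affine spaces and the associated ring map $k[V_{\can}] \twoheadrightarrow k[W]$ is surjective. Any finite generating set for $k[\mathfrak{g}_1^*]^{G_0}$ as a $k[V_{\can}]$-module thus also generates it as a $k[W]$-module, so $k[W] \to k[\mathfrak{g}_1^*]^{G_0}$ is finite, i.e., $h$ is finite. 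The ``in particular'' conclusion then follows immediately: $h^{-1}(0)$ is a finite, hence zero-dimensional, closed $\mathbb{G}_m$-invariant subvariety of $\mathfrak{g}_1^*//G_0$, so by the fact recorded in the lemma it must be the origin.

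The main obstacle is the parenthetical warning that $\bigoplus_i \mathfrak{g}_i$ need not coincide with $\mathfrak{g}$ as a graded Lie algebra, which might prevent a direct application of Vinberg. In the relevant setting --- $x \in \tfrac{1}{q} X_*(T)$, which is all that arises in view of Lemma \ref{rationality} --- the grading is induced by the cocharacter $qx$, and one readily checks that $\bigoplus_i \mathfrak{g}_i = \mathfrak{g}$, so Vinberg applies directly. If one wished to handle the general case, one would factor through the pseudo-Levi subalgebra $\mathfrak{g}' := \bigoplus_i \mathfrak{g}_i$, applying Vinberg to obtain finiteness of $\mathfrak{g}_1^*//G_0 \to \mathfrak{g}'^*//G'$ and combining with the standard finiteness of $\mathfrak{g}'^*//G' \to \mathfrak{g}^*//G$ for the pseudo-Levi pair $(G', \mathfrak{g}')$. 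Beyond this bookkeeping, the argument is essentially a one-line reduction to Vinberg plus the surjectivity $k[V_{\can}] \twoheadrightarrow k[W]$.
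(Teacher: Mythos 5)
Your argument is correct, and it rests on the same essential input as the paper's, namely Vinberg's theory of graded Lie algebras. Both proofs reduce to showing that $i:\mathfrak{g}_1^*//G_0\to\mathfrak{g}^*//G$ is finite and then deduce finiteness of $h$ from the surjection $k[V_{\can}]\twoheadrightarrow k[W]$. The paper invokes Vinberg differently, though: it chooses a Cartan subspace $\mathfrak{c}\subseteq\mathfrak{g}_1$, extends it to a Cartan $\mathfrak{h}$ of the \emph{full} $\mathfrak{g}$, and places $i$ in a commutative square whose other three arrows are finite and surjective by Vinberg's Chevalley-type restriction theorem for $\theta$-groups, classical Chevalley, and the linear inclusion $\mathfrak{c}\hookrightarrow\mathfrak{h}$. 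This makes finiteness of $i$ immediate and, importantly, sidesteps the question of whether $\mathfrak{g}':=\bigoplus_i\mathfrak{g}_i$ equals $\mathfrak{g}$. Your direct appeal to a finite restriction-of-invariants map gives finiteness of $\mathfrak{g}_1^*//G_0\to\mathfrak{g}'^*//G'$, not a priori $\to\mathfrak{g}^*//G$, and you rightly flag this gap. However, be cautious with your proposed primary fix: Lemma \ref{rationality} establishes rationality of the jump loci and a compact fundamental domain for $x$, but it does not directly say that one may take $x\in\frac{1}{q}X_*(T)$. Your pseudo-Levi fallback is correct (since $\mathfrak{g}'\supseteq\mathfrak{g}_0\supseteq\mathfrak{t}$ contains a Cartan of $\mathfrak{g}$, the map $\mathfrak{g}'^*//G'\cong\mathfrak{t}/W'\to\mathfrak{t}/W\cong\mathfrak{g}^*//G$ is finite) and essentially reconstructs the paper's commutative square, so that is the route to rely on.
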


\begin{proof}
As $i$ is just $h$ composed with a closed embedding, we may show that $i$ is finite instead. To do so, we use some results from \cite{Vinberg}. We recall that a maximal subspace $\mathfrak{c}\subseteq\mathfrak{g}_1$ consisting of commuting semisimple elements is called a Cartan subspace. Choose such a subspace, as well as an extension $\mathfrak{h}\subseteq\mathfrak{g}$ to a Cartan of $\mathfrak{g}$. Then we have a commutative diagram

\begin{equation*}
\begin{tikzcd}
\mathfrak{c} \arrow[r] \arrow[d] & \mathfrak{h}\arrow[d]\\
\mathfrak{g}_1^*//G_0 \arrow[r, "i"] & \mathfrak{g}^*//G.
\end{tikzcd}
\end{equation*}

By Theorem 7 of \cite{Vinberg}, the vertical arrows are finite and surjective. (In fact, they are global quotients by finite groups.) The top arrow is evidently a closed embedding, hence finite. Commutativity of the diagram now implies that $i$ is finite, as desired.

\end{proof}

\subsubsection{Case of $r=0$}

Let us explain how to modify the proof to work when $r=0$. This case is substantially simpler, because $\hat{\mathfrak{g}}_{\crit}\operatorname{-mod}^{K_{x,0+},\circ}\cong\hat{\mathfrak{g}}_{\crit}\operatorname{-mod}^{K_{x,0+}}.$ In particular, we do not need the argument of Section \ref{lowerbound}.

We start by constructing generators for $\hat{\mathfrak{g}}_{\crit}\operatorname{-mod}^{K_{x,0+}}.$ The arguments of Section \ref{KMstepgenerators} show that the composition

\[\mathfrak{k}_{x,0}/\mathfrak{k}_{x,0+}\operatorname{-mod}\rightarrow\mathfrak{k}_{x,0}\operatorname{-mod}^{K_{x,0+}}\rightarrow\hat{\mathfrak{g}}_{\crit}\operatorname{-mod}^{K_{x,0+}}\]

\noindent has generating image. As $\mathfrak{k}_{x,0}/\mathfrak{k}_{x,0+}\operatorname{-mod}$ is the derived category of modules over an algebra, it is generated by the elements of its heart.

It thus suffices to show that for any $M\in\mathfrak{k}_{x,0}/\mathfrak{k}_{x,0+}\operatorname{-mod}^{\heartsuit},$ we have

\[\operatorname{coact}(\operatorname{Ind}_{\mathfrak{k}_{x,0}}^{\hat{\mathfrak{g}}}M)\in\hat{\mathfrak{g}}_{\crit}\operatorname{-mod}\otimes\IndCoh^*(\widehat{\Op_{\check{\mathfrak{g}}}^{=0}}).\]

This follows from the same argument as in Section \ref{KMstepupper}.
\end{proof}

\subsection{Remarks and extensions}

\subsubsection{Relation to Chen-Kamgarpour}

Theorem \ref{depthKM} is closely related to the work of Chen-Kamgarpour in \cite{CK}. The first half of the proof of Theorem \ref{lemmadepthKM} (in which we show the upper bound on depth) essentially repeats the proof of Theorem 1 in loc.cit. However, the proof of the lower bound (which critically uses Theorem \ref{generation}) is novel and allows us to significantly strengthen their results.

In particular, Theorem \ref{depthKM} implies Conjecture 1 of \cite{CK}. Let us state their conjecture in our language. Let $\chi$ be some $\check{\mathfrak{g}}$-oper on the punctured disk. Then we can define a category

$$\hat{\mathfrak{g}}_{\crit}\operatorname{-mod}_{\chi}\cong\hat{\mathfrak{g}}_{\crit}\operatorname{-mod}\overset{\IndCoh^*(\Op_{\check{\mathfrak{g}}})}{\otimes}\IndCoh^*(\operatorname{Spec} k_{\chi}).$$

Let $r$ be the depth of $\xi$ as an oper. Their Conjecture 1 states that
\begin{itemize}
\item $(\hat{\mathfrak{g}}_{\crit}\operatorname{-mod}_{\chi})_{<r}$ is trivial
\item $(\hat{\mathfrak{g}}_{\crit}\operatorname{-mod}_{\chi})_{\leq r}$ is nontrivial
\end{itemize}

\noindent and they prove the first statement as their Theorem 1. Our Theorem \ref{depthKM} implies that

$$(\hat{\mathfrak{g}}_{\crit}\operatorname{-mod}_{\chi})_{\leq r}\cong\hat{\mathfrak{g}}_{\crit}\operatorname{-mod}_{\chi}.$$

This is actually much stronger than their second statement --- it contains, for instance, a localization statement (see the next subsubsection.)

\subsubsection{Relation to Frenkel-Gaitsgory}

Our theorem can be thought of as providing a ramified analogue of a conjecture of Frenkel-Gaitsgory from \cite{FG2}. They define a category of representations $\hat{\mathfrak{g}}_{\crit}\operatorname{-mod}_{\operatorname{reg}}$ with central support on the locus of regular opers $\Op_{\check{\mathfrak{g}}}^{\operatorname{reg}},$ and they conjecture an equivalence with a (modified) category of D-modules. The case of $\operatorname{SL}_2$ was proven by Raskin in \cite{raskinFG}.

Similarly, for $r$ a nonnegative integer $n$, it can (nontrivially) be deduced from Theorem \ref{depthKM} that

$$\hat{\mathfrak{g}}_{\crit}\operatorname{-mod}_{\widehat{\Op_{\check{\mathfrak{g}}}^{\leq n}}}\cong D(G((t))/K_{0,n+1})\otimes_{D(K_{0,n+1}\backslash G((t))/K_{0,n+1})}\hat{\mathfrak{g}}_{\crit}\operatorname{-mod}^{K_{0,n+1}}.$$

We would like to ``set $n=-1$'' to prove a localization statement for the locus of regular opers. With some work this can be done, but there are serious technical difficulties involved in going from the resulting statement to the Frenkel-Gaitsgory conjecture. For $\operatorname{SL}_2$, a workaround for these obstacles was developed in \cite{raskinFG}. In a future paper, joint with S. Raskin, we plan to combine the methods of \cite{raskinFG} with the Moy-Prasad theory to prove the Frenkel-Gaitsgory conjecture for all $G$.

\subsubsection{Denominators of depths}

Finally, let us isolate another consequence of our argument. Choose a set of homogeneous generators of $U(\mathfrak{g})^G$ as a polynomial algebra. Their degrees will be called the fundamental degrees of $\mathfrak{g}$.

\begin{proposition}\label{divisibility}
Let $C$ be a $G((t))$-category and $r$ a rational number such that $C_{=r}$ is nontrivial. Then the denominator of $r$ must divide a fundamental degree of $\mathfrak{g}$. 
\end{proposition}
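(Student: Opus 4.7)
The plan is to reduce the claim to the GIT analysis already performed in Section \ref{lowerbound}. By the corollary to Theorem \ref{generation}, the nontriviality of $C_{=r}$ gives some $x\in X_*(T)\otimes\mathbb{R}$ with $C^{K_{x,r+},\circ}\not\cong 0$; in particular $D(G((t))/K_{x,r+})^{\circ}$ is nontrivial, which forces the semistable locus $(K_{x,r}/K_{x,r+})^{*,\circ}$ to be nonempty. Equivalently, the GIT quotient $(K_{x,r}/K_{x,r+})^*//L_x$ is positive dimensional, i.e., the graded ring of $L_x$-invariants contains a nonconstant element.

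Next I would write $r=p/q$ in lowest terms (the case $r=0$ is trivial) and import the graded Lie algebra setup from Section \ref{lowerbound}: the $q$-grading $\mathfrak{g}=\mathfrak{g}_0\oplus\cdots\oplus\mathfrak{g}_{q-1}$ cut out by $\mathfrak{g}_i=\bigoplus_{\langle x,\alpha\rangle\equiv ip/q\pmod 1}\mathfrak{g}_{\alpha}$, under which $K_{x,r}/K_{x,r+}\cong\mathfrak{g}_1$ and $L_x\cong G_0$. The commutative diagram at the end of Section \ref{lowerbound} and the accompanying lemma produce a finite map
\[
h:\mathfrak{g}_1^*//G_0\longrightarrow W,\qquad W\cong V_{\can,q-1}\oplus V_{\can,2q-1}\oplus\cdots,
\]
obtained from the composition with $\mathfrak{g}^*//G\cong V_{\can}$. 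Since $h$ is finite, $\dim W\geq\dim(\mathfrak{g}_1^*//G_0)\geq 1$, so at least one summand $V_{\can,n}$ with $n\equiv -1\pmod q$ must be nonzero.

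To finish, I would invoke Kostant's classical description of the Kostant slice: $V_{\can,n}$ is nonzero if and only if $n+1$ is a fundamental degree of $\mathfrak{g}$ (indeed, $V_{\can}$ has a basis of homogeneous elements in degrees $d_1-1,\ldots,d_{\ell}-1$, each $V_{\can,d_i-1}$ being one-dimensional). Thus some fundamental degree $d_i$ satisfies $q\mid d_i$, proving the proposition.

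The main obstacle to check is that the key lemma from Section \ref{lowerbound}, which established finiteness of $h$, was stated there in the Feigin--Frenkel/oper context; I want to make sure its proof (which ultimately invokes Theorem 7 of \cite{Vinberg} on the finiteness of $\mathfrak{c}\to\mathfrak{g}_1^*//G_0$ and on the Chevalley restriction $\mathfrak{h}\to\mathfrak{g}^*//G$) depends only on the graded-Lie-algebra data coming from $(x,r)$ and not on anything specific to opers. This should be automatic, but it is the one step that requires explicit verification before the argument above can be cited as a black box.
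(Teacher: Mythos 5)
Your proof is correct and matches the paper's own argument: both invoke the corollary to Theorem \ref{generation} to produce a nonempty semistable locus $(K_{x,r}/K_{x,r+})^{*,\circ}$, pass to the graded Lie algebra picture with $\mathfrak{g}_1\cong K_{x,r}/K_{x,r+}$ and $G_0\cong L_x$, and then cite the Section \ref{lowerbound} identification of $W$ with $V_{\can,q-1}\oplus V_{\can,2q-1}\oplus\cdots$ together with the finiteness of $h:\mathfrak{g}_1^*//G_0\to W$ to obtain the divisibility. Your concern at the end is not an obstruction here: the paper's own proof (``arguing as in the end of our proof'') relies on exactly the same Section \ref{lowerbound} package, in which the factorization of $i$ through $W$ is established via the Feigin--Frenkel symbol compatibility and the finiteness of $h$ is reduced to Vinberg's Theorem 7, so it is legitimate to cite both as a black box without re-deriving them oper-free.
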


This is essentially not new (see Remark 2 of \cite{CK}, where it is mentioned that this was proven by J.K.Yu in the arithmetic setting.)

\begin{proof}
By Theorem \ref{generation}, we must have that $(K_{x,r}/K_{x,r+})^{*,\circ}$ is nontrivial for some $x$. This implies, in the language of graded Lie algebras, that $\mathfrak{g}_1^*/G_0$ is nontrivial, and arguing as in the end of our proof shows that the denominator $q$ of $r$ must divide the degree of some generator of $\mathcal{O}(\mathfrak{g}^*//G).$
\end{proof}

\section{Depth filtration on $\Whit$}

\subsection{The category $\Whit$}

We now compute the depth filtration on the category of Whittaker sheaves. This category, denoted $\Whit$, is defined as the category of D-modules on $G((t))$ which are right $N((t))$-equivariant with respect to a certain character $\psi: N((t))\rightarrow k.$ To define this character, note that $N/[N,N]$ is a sum of copies of the additive group, and so we have maps

\[N((t))\rightarrow N/[N,N]((t))\rightarrow N/[N,N],\]

\noindent where the last map is the residue map. Composing with any nondegenerate (i.e., nontrivial on every simple root) character of $N/[N,N]$, we get the desired character $\psi$.

Our main result is that the truncations $\Whit_{\leq r}$ can also be expressed as categories of equivariant D-modules on $G((t))$. For $r$ integral, we recover the adolescent Whittaker construction of \cite{Whit}, as anticipated by Remark 1.22.2 of loc.cit. This section can thus be viewed as an extension of the adolescent formalism to rational $r$, and our results can likely be proven via the methods of \cite{Whit}. However, we will take a different approach, using the (categorical) representation theory of the Heisenberg group.

\subsection{Definition of subgroups}

For any rational number $s\geq 1$, we define a subgroup $\overset{\circ}{I}_s\subseteq G((t))$. It will be expressable as a product $B^{-}[[t]]_sN((t))_s$, for subgroups $B^{-}[[t]]_s\subseteq B^{-}[[t]]$ and $N((t))_s\subseteq N((t))$. We can describe these in terms of Moy-Prasad subgroups:

\[B^{-}[[t]]_s=K_{s\check{\rho},(s-1)+}\cap B^{-}[[t]]\]

\noindent and

\[N((t))_s=P_{s\check{\rho}}\cap N((t))\]

\noindent where $\check{\rho}$ denotes the half-sum of positive coroots. In fact, we prove a slightly more general statement than that $\overset{\circ}{I}_s$ is a subgroup.

\begin{lemma}
Assume we have rational numbers $s'\geq s\geq 1.$ Then $B^{-}[[t]]_{s'}N((t))_s$ is a subgroup of $G((t))$.
\end{lemma}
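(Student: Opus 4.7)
The plan is to prove this at the level of Lie algebras and then exponentiate. Using the identity $\langle\beta,\check{\rho}\rangle = \mathrm{ht}(\beta)$ for roots $\beta$, I would first write out the root-space decompositions: $\mathrm{Lie}(B^{-}[[t]]_{s'})$ consists of Cartan contributions $\mathfrak{t}\,t^i$ with $i > s'-1$ together with negative-root contributions $\mathfrak{g}_{-\gamma}t^i$ for $\gamma$ a positive root and $i > s'(\mathrm{ht}(\gamma)+1)-1$; while $\mathrm{Lie}(N((t))_s)$ consists of positive-root contributions $\mathfrak{g}_{\alpha}t^i$ with $i \geq -s\cdot\mathrm{ht}(\alpha)$. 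These two Lie algebras intersect trivially, so their sum can be viewed as a direct sum $\mathfrak{h}$.

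The key step is to verify that $\mathfrak{h}$ is closed under the Lie bracket, hence is a Lie subalgebra. Brackets internal to each summand are automatic. For a cross bracket of $\mathfrak{g}_{\alpha}t^j \in \mathrm{Lie}(N((t))_s)$ with some $\mathfrak{g}_{-\gamma}t^i \in \mathrm{Lie}(B^{-}[[t]]_{s'})$, the result lands in $\mathfrak{g}_{\alpha-\gamma}t^{i+j}$, and one verifies the required membership in $\mathfrak{h}$ according as $\alpha-\gamma$ is positive, negative, or zero. Each case reduces to an elementary manipulation using $s' \geq s \geq 1$; for instance, in the Cartan case ($\alpha=\gamma$) one needs $i+j > s'-1$, which follows from $i > s'\,\mathrm{ht}(\gamma)+s'-1$ and $j \geq -s\,\mathrm{ht}(\gamma)$ since $(s'-s)\mathrm{ht}(\gamma) \geq 0$. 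The cross brackets involving a Cartan term from $\mathrm{Lie}(B^{-}[[t]]_{s'})$ are analogous and easier, since $i \geq 1$ forces the result into $\mathrm{Lie}(N((t))_s)$ immediately.

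Since $\mathfrak{h}$ is a pro-nilpotent Lie subalgebra of $\mathfrak{g}((t))$ --- the assumption $s' \geq 1$ eliminates any constant-term Cartan contribution, and the root-space lower bounds in $t$ are preserved under bracketing --- it exponentiates to a closed subgroup $H$ of $G((t))$. Using the direct-sum decomposition of $\mathfrak{h}$ together with a Baker-Campbell-Hausdorff argument applied to the pro-nilpotent setting, the multiplication map $B^{-}[[t]]_{s'} \times N((t))_s \to H$ is an isomorphism of pro-schemes. In particular $H$ coincides with the product set $B^{-}[[t]]_{s'} N((t))_s$, endowing the latter with the claimed subgroup structure.

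The main obstacle is the bracket-closure verification, which splinters into several cases according to the signs and heights of the roots involved and requires tracking the $s$ versus $s'$ inequalities carefully; none of the individual checks are deep, but they must be organized systematically to confirm that the asymmetry between $s$ and $s'$ is exactly what is needed.
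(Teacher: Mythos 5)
Your proposal is correct, but it takes a genuinely different route from the paper's. You verify closure under the Lie bracket by a direct root-space case analysis, sorting the commutators $[\mathfrak{g}_{-\gamma}t^i,\mathfrak{g}_{\alpha}t^j]$ according to whether $\alpha-\gamma$ is positive, zero, or negative, and then exponentiating via BCH for pro-unipotent groups. The paper instead stays at the group level and exploits the normality of $K_{x,r+}$ in $P_x$: writing $B^{-}[[t]]_{s'}N((t))_s \subseteq K_{s\check\rho,(s-1)+}\cap K_{s'\check\rho,(s'-1)+}$ lets it bound $[B^{-}[[t]]_{s'},N((t))_s]$ inside that intersection in one stroke, and then it factors the intersection into its $B^{-}[[t]]$ and $N((t))$ parts (an Iwahori-type factorization for Moy-Prasad groups) to land back in $B^{-}[[t]]_{s'}N((t))_s$. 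Your method is more elementary and makes the role of the inequalities $s'\ge s\ge 1$ completely explicit at the level of each root, at the cost of a multi-case computation and the extra (standard, but worth stating carefully) step of exponentiating a pro-nilpotent Lie subalgebra to a closed subgroup and invoking BCH to show the multiplication map $B^{-}[[t]]_{s'}\times N((t))_s\to H$ is an isomorphism. The paper's approach buys conceptual economy --- no case analysis and no explicit exponentiation argument --- by leaning on structural facts about $K_{x,r+}\triangleleft P_x$.
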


\begin{proof}
It suffices to show that $[B^{-}[[t]]_{s'},N((t))_s]\subseteq B^{-}[[t]]_{s'}N((t))_s.$ Note that

\begin{align*}
[B^{-}[[t]]_{s'},N((t))_s]&\subseteq [B^{-}[[t]]_s,N((t))_s]\cap [B^{-}[[t]]_{s'},N((t))_{s'}]\\
&\subseteq [K_{s\check{\rho},(s-1)+},P_{s\check{\rho}}]\cap [K_{s'\check{\rho},(s'-1)+},P_{s'\check{\rho}}]\\
&\subseteq K_{s\check{\rho},(s-1)+}\cap K_{s'\check{\rho},(s'-1)+}.
\end{align*}

As for the $K_{x,r+}$, we can factor $K_{s\check{\rho},(s-1)+}\cap K_{s'\check{\rho},(s'-1)+}$ into its intersections with $B^{-}[[t]]$ and $N((t))$. We have

\begin{align*}
K_{s\check{\rho},(s-1)+}\cap K_{s'\check{\rho},(s'-1)+}\cap B^{-}[[t]]&\cong B^{-}[[t]]_{s}\cap B^{-}[[t]]_{s'}\\
&\cong B^{-}[[t]]_{s'}
\end{align*}

\noindent and

\begin{align*}
K_{s\check{\rho},(s-1)+}\cap K_{s'\check{\rho},(s'-1)+}\cap N((t))&\subseteq P_{s\check{\rho}}\cap P_{s'\check{\rho}}\cap N((t))\\
&\cong N((t))_s\cap N((t))_{s'}\\
&\cong N((t))_s
\end{align*}

\noindent so $K_{s\check{\rho},(s-1)+}\cap K_{s'\check{\rho},(s'-1)+}\subseteq B^{-}[[t]]_{s'}N((t))_s$, as desired.
\end{proof}

We would like to define from $\psi$ a character on $\overset{\circ}{I}_s$. This is accomplished by the following lemma.

\begin{lemma}
The restriction $\psi |_{N((t))_s}$extends uniquely to a character on $\overset{\circ}{I}_s$ trivial on $B^{-}[[t]]_s$.
\end{lemma}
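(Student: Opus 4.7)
\smallskip\noindent\textit{Proof plan.} Uniqueness is immediate: by the previous lemma applied with $s' = s$, we have $\overset{\circ}{I}_s = B^{-}[[t]]_s \cdot N((t))_s$, and the two factors intersect trivially (since $B^{-} \cap N = \{e\}$ in $G$), so any character of $\overset{\circ}{I}_s$ that is trivial on $B^{-}[[t]]_s$ is determined by its restriction to $N((t))_s$. For existence, I pass to Lie algebras and define a linear functional on $\mathfrak{i}_s = \mathfrak{b}^{-}[[t]]_s \oplus \mathfrak{n}((t))_s$ by $Y + X \mapsto \psi(X)$. Since $s \geq 1$, Cartan contributions to $\mathfrak{b}^{-}[[t]]_s$ appear only in strictly positive powers of $t$, so both summands are pro-nilpotent and $\overset{\circ}{I}_s$ is ind-pro-unipotent in characteristic $0$. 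Characters of $\overset{\circ}{I}_s$ therefore correspond bijectively to Lie algebra functionals vanishing on brackets, so it suffices to verify this vanishing.

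\smallskip The brackets split into three types. The bracket $[\mathfrak{b}^{-}[[t]]_s, \mathfrak{b}^{-}[[t]]_s] \subseteq \mathfrak{b}^{-}[[t]]_s$ is killed by the functional by construction; $[\mathfrak{n}((t))_s, \mathfrak{n}((t))_s]$ is killed because $\psi$ factors through $\mathfrak{n}/[\mathfrak{n}, \mathfrak{n}]$. The substantive check is for the cross bracket $[\mathfrak{b}^{-}[[t]]_s, \mathfrak{n}((t))_s]$. Decomposing into weight spaces, take $Y \in \mathfrak{g}_{-\beta_0} t^j \subseteq \mathfrak{b}^{-}[[t]]_s$ (with $\beta_0$ a positive root or zero, of height $h_1 \geq 0$) and $X \in \mathfrak{g}_\gamma t^l \subseteq \mathfrak{n}((t))_s$ (with $\gamma$ a positive root of height $h_2$). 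For $[Y, X] \in \mathfrak{g}_{\gamma - \beta_0} t^{j+l}$ to contribute nontrivially to $\psi$, the root $\gamma - \beta_0$ must be simple (forcing $h_2 = h_1 + 1$) and we must have $j + l = -1$. But the Moy-Prasad conditions force $j > s(h_1 + 1) - 1$ (from $\mathfrak{k}_{s\check\rho,(s-1)+}$) and $l \geq -s(h_1 + 1)$ (from $\mathfrak{p}_{s\check\rho}$), so $j + l > -1$; integrality then forces $j + l \geq 0$, contradicting $j + l = -1$. Hence the extension is a well-defined Lie algebra character, which exponentiates to the desired group character.

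\smallskip The main obstacle is the last cross-bracket computation: the argument relies on the mismatch between the strict inequality defining $K_{x,r+}$ and the non-strict inequality defining $P_x$, which is what produces the critical one unit of slack when one combines $j > s(h_1 + 1) - 1$ with $l \geq -s(h_1 + 1)$. The offset $(s-1)+$ in the definition of $B^{-}[[t]]_s$ is chosen precisely so that this slack is exactly enough to exclude any simple-root $t^{-1}$ contribution; weakening it (say to $K_{s\check\rho, s-1}$) would replace $j + l > -1$ with $j + l \geq -1$ and the extension would no longer be well-defined.
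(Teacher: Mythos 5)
Your proof is correct and follows essentially the same route as the paper's: both split the brackets into the two same-type pieces plus the cross bracket $[\mathfrak{b}^-[[t]]_s, \mathfrak{n}((t))_s]$, and both extract the same Moy-Prasad inequalities to exclude a simple-root $t^{-1}$ contribution. The only organizational difference is that the paper first uses the normality $[K_{s\check\rho,(s-1)+}, P_{s\check\rho}] \subseteq K_{s\check\rho,(s-1)+}$ to reduce the cross bracket to $\mathfrak{k}_{s\check\rho,(s-1)+} \cap \mathfrak{n}((t))$ and then applies a single inequality, whereas you combine both inequalities ($j > s(h_1+1)-1$ and $l \geq -s(h_1+1)$) directly at the weight-space level; the resulting arithmetic is identical, and your closing remark about the strict/non-strict mismatch correctly identifies where the one unit of slack comes from.
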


\begin{proof}
We need to check that $\psi$ kills $[\overset{\circ}{I}_s,\overset{\circ}{I}_s]\cap N((t))$. As $[B^{-}[[t]]_s,B^{-}[[t]]_s]\cap N((t))]$ is trivial and $[N((t))_s,N((t))_s]\cap N((t))\subseteq [N,N]((t))\subseteq \operatorname{ker}{\psi},$ it suffices to show that

\[[B^{-}[[t]]_s, N((t))_s]\cap N((t))\subseteq\operatorname{ker}{\psi}.\]

To do so, note that

\begin{align*}
[B^{-}[[t]]_s,N((t))_s]\cap N((t))&\subseteq[K_{s\check{\rho},(s-1)+},P_{s\check{\rho}}]\cap N((t))\\
&\subseteq K_{s\check{\rho},(s-1)+}\cap N((t)),
\end{align*}

\noindent so it suffices to show that $K_{s\check{\rho},(s-1)+}\cap N((t))\subseteq N[[t]]+[N,N]((t))$ (which lies in the kernel of $\psi$ by definition.) We can check this with a short computation at the level of Lie algebras:

\begin{align*}
\mathfrak{k}_{s\check{\rho},(s-1)+}\cap\mathfrak{n}((t))&=\displaystyle\bigoplus_{\substack{\langle\alpha,s\check{\rho}\rangle+i>s-1\\ \langle\alpha,\check{\rho}\rangle>0}}\mathfrak{g}_{\alpha}t^i\\
&\subseteq\displaystyle\bigoplus_{\substack{\langle s\check{\rho},x\rangle+i>s-1\\ \langle\alpha,\check{\rho}\rangle=1}}\mathfrak{g}_{\alpha}t^i+[\mathfrak{n},\mathfrak{n}]((t))\\
&=\mathfrak{n}[[t]]+[\mathfrak{n},\mathfrak{n}]((t))
\end{align*}
\end{proof}

We will abuse notation and denote all of these characters by $\psi.$ For $r$ integral, our subgroups $\overset{\circ}{I}_s$ recover those of \cite{Whit}, and our characters $\psi$ coincide with those of loc.cit. So for instance, for $r=1$, taking $(\overset{\circ}{I}_1,\psi)$ invariants gives the baby Whittaker construction of \cite{ArkB} (at least when the center of $G$ is connected.)

Our main theorem is the following:

\begin{theorem}
We have an equivalence

$$\Whit_{\leq r}\cong D(G((t)))^{\overset{\circ}{I}_{r+1},\psi}.$$
\end{theorem}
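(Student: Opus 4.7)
The plan is to exhibit an equivalence $\Whit_{\leq r}\cong D(G((t)))^{\overset{\circ}{I}_{r+1},\psi}$ by constructing a canonical adjoint pair between $\Whit$ and $D(G((t)))^{\overset{\circ}{I}_{r+1},\psi}$ and then showing it restricts to mutually inverse equivalences on the specified subcategories. The right adjoint is the functor of taking the extra $(B^{-}[[t]]_{r+1})$-invariants (which makes sense because $B^{-}[[t]]_{r+1}\subseteq\overset{\circ}{I}_{r+1}$ lies in $\ker\psi$), while the left adjoint is obtained by $\psi$-twisted pro-unipotent averaging against the quotient $N((t))/N((t))_{r+1}$, taking values in $\Whit$. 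A preliminary step is verifying that these functors are well-defined and mutually adjoint; this is formal once one unwinds the subgroup relations established in Section~5.2.

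For the upper bound $D(G((t)))^{\overset{\circ}{I}_{r+1},\psi}\subseteq\Whit_{\leq r}$, I would invoke Lemma~\ref{subquotient}: it suffices to produce a $G((t))$-equivariant conservative functor from $D(G((t)))^{\overset{\circ}{I}_{r+1},\psi}$ to a $G((t))$-category of depth $\leq r$. The natural candidate is (a version of) $D(G((t)))^{K_{(r+1)\check\rho,r+}}$, which lies in the depth $\leq r$ part essentially by definition of the filtration. Conservativity follows because $\overset{\circ}{I}_{r+1}$ already contains a large pro-unipotent piece, so further averaging against $K_{(r+1)\check\rho,r+}$ only contributes an integrable modification that does not annihilate nonzero objects.

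For the lower bound $\Whit_{\leq r}\subseteq D(G((t)))^{\overset{\circ}{I}_{r+1},\psi}$, I would apply Theorem~\ref{generation}: letting $C$ denote the cofiber (in $G((t))\rep$) of the embedding $D(G((t)))^{\overset{\circ}{I}_{r+1},\psi}\hookrightarrow\Whit$, it is enough to show that $C_{=s}\cong 0$ for every $s\leq r$, i.e., that $C^{K_{x,s+},\circ}\cong 0$ for all $x\in X_*(T)\otimes\mathbb{R}$ and all $s\leq r$. By the Fourier description in Section~3, $C^{K_{x,s+},\circ}$ is obtained from $\Whit^{K_{x,s+}}$ by localizing on the $L_x$-semistable locus of $(K_{x,s}/K_{x,s+})^*$. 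Using the Bruhat decomposition of $G((t))$ as in the proof of Theorem~\ref{generation}, this reduces to a stratum-by-stratum computation: on each stratum one matches up the support of the D-module cut out by $\psi$ with the support condition imposed by the $(\overset{\circ}{I}_{r+1},\psi)$-invariants, the two support conditions being forced to agree on the semistable locus.

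The main obstacle is this last stratum-by-stratum matching. What has to be shown is a concrete geometric statement, in the spirit of Lemma~\ref{input}: for each $(x,s)$ with $s\leq r$ and each affine Weyl representative $w$, the intersection of $w^{-1}N((t))w\cap K_{x,s}$ with the $K_{x,s+}$-orthogonal of the Whittaker character lands in the unstable locus of $(K_{x,s}/K_{x,s+})^*$, unless the corresponding double coset is already captured by the $\overset{\circ}{I}_{r+1}$-invariants. This is a finite combinatorial check involving the pairing of $\psi$ with the root spaces appearing in $K_{x,s}/K_{x,s+}$; the precise choice of $\overset{\circ}{I}_{r+1}$ (in particular the asymmetric roles of $B^{-}[[t]]_{r+1}$ and $N((t))_{r+1}$ relative to the optimal point $(r+1)\check\rho$) is tailored exactly to make this check go through, analogously to how the subgroups in \cite{Whit} work in the integer case.
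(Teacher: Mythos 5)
Your proposal takes a genuinely different route from the paper, and while the broad shape is reasonable, two of the load-bearing steps are missing or not correctly justified.

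The paper does not set up a single adjoint pair between $\Whit$ and $D(G((t)))^{\overset{\circ}{I}_{r+1},\psi}$. Instead, it uses the colimit presentation $\Whit\cong\operatorname{colim}_s D(G((t)))^{\overset{\circ}{I}_{s},\psi}$ (from \cite{Whit}), reduces to the inductive statement $D(G((t)))^{\overset{\circ}{I}_{s},\psi}_{\leq r}\cong D(G((t)))^{\overset{\circ}{I}_{\min(r+1,s)},\psi}$, and then proves Theorem \ref{consecutivejumploci}: for consecutive jump loci $s_1<s_2$, the transition $\iota_{s_1,s_2,*}$ is fully faithful and its cokernel is purely of depth $s_2-1$. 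Fully faithfulness is not formal; the paper proves it by exhibiting a ``Kostant-type'' subspace $U'\subseteq U$ and then running a categorical Heisenberg-group argument (Proposition \ref{Heisenberg}) degree by degree. The depth of the cokernel is computed by rewriting everything over $D(G((t))/K_{s_2\check\rho,(s_2-1)+})$ and invoking the Kostant section to see that the relevant support is $G$-semistable, so that Theorem \ref{generation} applies.

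Your proposal is silent on precisely the part the paper works hardest to establish: you write as if the composite $\operatorname{Oblv}$--$\operatorname{Av}_*$ map $D(G((t)))^{\overset{\circ}{I}_{r+1},\psi}\to\Whit$ is automatically fully faithful (``this is formal once one unwinds the subgroup relations''), so that one may speak of its cofiber in $G((t))\rep$ and apply Theorem \ref{generation} to that cofiber. It is not formal; the entire Heisenberg trick exists to prove exactly this. Without that input, the lower bound step collapses.

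Your upper-bound step is salvageable but stated with the wrong justification. The correct observation is that $K_{(r+1)\check\rho,r+}\subseteq\overset{\circ}{I}_{r+1}$ and $\psi$ restricted to $K_{(r+1)\check\rho,r+}$ is trivial (both parts of the paper's Lemma on $\psi$ extending), so there is a $G((t))$-equivariant fully faithful (hence conservative) embedding $D(G((t)))^{\overset{\circ}{I}_{r+1},\psi}\hookrightarrow D(G((t))/K_{(r+1)\check\rho,r+})$, and the target manifestly has depth $\leq r$; Lemma \ref{subquotient}(1) does the rest. Your appeal to ``averaging only contributes an integrable modification that does not annihilate nonzero objects'' is not a real argument and is also unnecessary once the embedding is observed. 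For the lower bound, your proposed Bruhat-stratum-by-stratum ``support matching'' is not substantiated and would require a new geometric lemma of the flavor of Lemma \ref{input}; the paper avoids any such global Bruhat analysis by inducting along the chain of jump loci, where the only new piece at each stage is controlled by a single Kostant slice and Theorem \ref{generation}. In short, the key ideas that make the theorem true --- the Heisenberg trick for fully faithfulness and the Kostant-section computation of the cofiber's depth --- are absent from your argument.
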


\subsection{Reduction to inductive step}

To prove this, we analyze the maps between the $D(G((t)))^{\overset{\circ}{I}_{s},\psi}$. Call $s$ a jump locus if, for all rational numbers $s' < s$, $\overset{\circ}{I}_{s'}\neq\overset{\circ}{I}_{s}.$ Choose two consecutive jump loci $s_1<s_2$. Then we have a map

$$\iota_{s_1,s_2,*}:D(G((t)))^{\overset{\circ}{I}_{s_1},\psi}\rightarrow D(G((t)))^{\overset{\circ}{I}_{s_2},\psi}$$

\noindent given by

$$D(G((t)))^{B^{-}[[t]]_{s_1}N((t))_{s_1},\psi}\xrightarrow{\operatorname{Oblv}}D(G((t)))^{B^{-}[[t]]_{s_2}N((t))_{s_1},\psi}\xrightarrow{\operatorname{Av}_*}D(G((t)))^{B^{-}[[t]]_{s_2}N((t))_{s_2},\psi}.$$

By Lemma 2.9.1 and Theorem 2.1.1 of \cite{Whit}, the colimit of the $D(G((t)))^{\overset{\circ}{I}_{s},\psi}$ under the $\iota_{s_1,s_2,*}$ is equivalent to $\Whit$. Thus, it suffices to prove that

$$D(G((t)))^{\overset{\circ}{I}_{s},\psi}_{\leq r}\cong D(G((t)))^{\overset{\circ}{I}_{\operatorname{min}(r+1, s)},\psi}.$$

We do so by induction on $s$. The base case of $s=1$ is easy, as $\overset{\circ}{I}_1$ contains $K_{\check{\rho},0+}$ and hence $D(G((t)))^{\overset{\circ}{I}_{1},\psi}$ is purely of depth $0$. For the inductive step, again choose two consecutive jump loci $s_1<s_2$. We need to prove:

\begin{theorem}\label{consecutivejumploci}
The map $\iota_{s_1,s_2,*}$ is fully faithful with quotient purely of depth $s_2-1$.
\end{theorem}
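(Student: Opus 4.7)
The plan is to adapt the Heisenberg-group strategy of \cite{Whit}, which handles integer jumps, to general rational consecutive jump loci $s_1 < s_2$. The first task is to isolate a Heisenberg-type structure. Setting $J := \overset{\circ}{I}_{s_1}\cap \overset{\circ}{I}_{s_2}$, a Moy-Prasad root-space analysis (in the spirit of the preceding lemmas) should identify $J$ with $B^{-}[[t]]_{s_2}N((t))_{s_1}$ and identify both quotients $V_1 := \overset{\circ}{I}_{s_1}/J$ and $V_2 := \overset{\circ}{I}_{s_2}/J$ with finite-dimensional vector groups concentrated at the unique root-space range opened up between the two consecutive jump loci. The key claim, using that $s_1, s_2$ are consecutive, is that the commutator pairing $V_1\times V_2 \to N/[N,N]((t))$, composed with $\psi$, is a perfect duality; this reduces to matching positive-root contributions on the $V_2$ side with their mirrored negative-root contributions on the $V_1$ side (the $\check\rho$-symmetric definition of $\overset{\circ}{I}_s$ is exactly what makes this matching work), together with the non-degeneracy of $\psi$ on simple roots.

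Given this Heisenberg structure, I would then invoke categorical Stone--von Neumann (as in Section 2 of \cite{Whit}): the categories $D(G((t)))^{\overset{\circ}{I}_{s_1},\psi}$ and $D(G((t)))^{\overset{\circ}{I}_{s_2},\psi}$ embed as opposite Lagrangian invariants inside $D(G((t)))^{J,\psi}$, and the composition $\iota_{s_1,s_2,*}$ implements the Fourier--Mukai-type equivalence between them, giving fully faithfulness. For the quotient $Q$, the upper bound on depth comes from the containment $B^{-}[[t]]_{s_2} = K_{s_2\check\rho,(s_2-1)+}\cap B^{-}[[t]]$, a pro-unipotent subgroup on which $\psi$ vanishes; combined with Moy-Prasad bookkeeping this forces every object of $D(G((t)))^{\overset{\circ}{I}_{s_2},\psi}$ to lie in $G((t))\rep_{\leq s_2-1}$. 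The strict lower bound is where Theorem~\ref{generation} enters: for any $x$ and $r < s_2-1$, one must show $Q^{K_{x,r+},\circ}\cong 0$. A surviving class would produce, via the Heisenberg description, a $K_{x,r+}$-equivariant vector in the $\circ$-locus pairing non-trivially against $\psi$, contradicting the fact that the Heisenberg pairing of Step 1 requires access to the full depth-$(s_2-1)$ root-space contributions.

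The main obstacle will be Step 1: establishing perfect non-degeneracy of the commutator-then-$\psi$ pairing in the general rational case. In the integer setting of \cite{Whit} this is a concrete check on specific root spaces, but for arbitrary rational jump loci the roots contributing to $V_1$ and $V_2$ are selected by the non-integer inequalities defining $\overset{\circ}{I}_s$, and one must verify uniformly that each contribution to $V_1$ has a unique partner in $V_2$ against which it pairs non-trivially under $\psi$. An additional subtlety in Step 3 is making the ``$\circ$ versus semi-invariant'' translation precise: the generation theorem's $\circ$-locus criterion has to be related to the Heisenberg central character, which requires some care.
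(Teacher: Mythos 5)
Your proposal correctly guesses the main ingredients (the Heisenberg/Stone--von Neumann mechanism and Theorem~\ref{generation} for the depth of the quotient), but Step~1 as written is false, and this false claim is what hides the actual content of the proof. The claim that the commutator-then-$\psi$ pairing between $V_1 := \overset{\circ}{I}_{s_1}/J$ and $V_2 := \overset{\circ}{I}_{s_2}/J$ is a \emph{perfect} duality would, by your own Step~2, force $\iota_{s_1,s_2,*}$ to be an equivalence, leaving no quotient to analyze. And a dimension count shows the pairing cannot be perfect: identifying $U:=V_1$ and $V:=V_2$ with their images in the $\check\rho$-graded Lie algebra, the pairing $\psi([-,-])$ couples the degree-$(-n)$ piece of $U$ (sitting inside $\mathfrak{g}_{-n}$) with the degree-$(n+1)$ piece of $V$ (inside $\mathfrak{g}_{n+1}$), since only then does the bracket land in $\mathfrak{g}_1$; but $\dim\mathfrak{g}_{-n}=\dim\mathfrak{g}_n\geq\dim\mathfrak{g}_{n+1}$, with strict inequality once $n\geq 1$ for essentially any nonabelian $\mathfrak{g}$ (already $\mathfrak{g}=\mathfrak{sl}_2$, $n=1$ gives $1>0$). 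The kernel of the pairing on the $U$ side is precisely $\ker(\operatorname{ad}p_{-1})\cap U$, where $\{p_{-1},2\check\rho,p_1\}$ is the principal $\mathfrak{sl}_2$-triple determined by $\psi$. The paper's proof therefore introduces the subspace $U'\subset U$ cut out by the $\operatorname{ad}p_1$-image complement of $\ker\operatorname{ad}p_{-1}$, shows that $U'$ does pair perfectly with $V$ (degree by degree, via an iterated telescoping Heisenberg argument through interpolating spaces $W_n$), and then identifies $\iota_{s_1,s_2,*}$ with the tautological fully faithful embedding $C^U\hookrightarrow C^{U'}$. The nontrivial quotient is produced by exactly the summand $\ker\operatorname{ad}p_{-1}$ that you assumed away.

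Your Step~3 is consequently also off target. The paper shows that the quotient of $C^U\hookrightarrow C^{U'}$ has Fourier support inside $(\psi+(\mathfrak{n}+U')^{\perp})\setminus\{\psi\}\subset(K_{s_2\check\rho,s_2-1}/K_{s_2\check\rho,(s_2-1)+})^*$; dualizing via the Killing form sends $\psi\mapsto p_{-1}$ and $(\mathfrak{n}+U')^{\perp}\mapsto\ker\operatorname{ad}p_1$, so the support lies in the Kostant slice $p_{-1}+\ker\operatorname{ad}p_1$ with the point $p_{-1}$ (mapping to $0\in\mathfrak{g}//G$) removed, which is $G$-semistable. Theorem~\ref{generation} then pins the depth to exactly $s_2-1$. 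Thus the $\mathfrak{sl}_2$-triple does double duty, selecting the Lagrangian $U'$ and placing the quotient's support on the semistable locus via the Kostant section. Your sketch does not introduce the triple, and the hand-wave about ``a $K_{x,r+}$-equivariant vector pairing non-trivially against $\psi$'' has no clear route to either ingredient.
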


\subsection{Proof of Theorem \ref{consecutivejumploci}}
\subsubsection{Preliminary definitions}

\begin{proof}
By the definition of a jump locus, we must have $B^{-}[[t]]_{s_1}=B^{-}[[t]]_{s_2-\epsilon}$ for sufficiently small $\epsilon > 0$. Thus, we have an equality

\begin{align*}
B^{-}[[t]]_{s_1}&=B^{-}[[t]]_{s_2-\epsilon}\\
&=K_{(s_2-\epsilon)\check{\rho},(s_2-\epsilon-1)+}\cap B^{-}[[t]]\\
&=K_{s_2\check{\rho},s_2-1}\cap B^{-}[[t]].
\end{align*}

In particular, $B^{-}[[t]]_{s_2}$ is a normal subgroup of $B^{-}[[t]]_{s_1},$ and the quotient (which we denote $U$) $B^{-}[[t]]_{s_1}/B^{-}[[t]]_{s_2}$ is a vector space. Namely, we see that $U$ is the image of $B^{-}[[t]]_{s_1}$ inside $K_{s_2\check{\rho},s_2-1}/K_{s_2\check{\rho},(s_2-1)+}.$ This can be rephrased as the following pullback square:

\begin{equation*}
\begin{tikzcd}
U \arrow[r] \arrow[d] & K_{s_2\check\rho,s_2-1}/K_{s_2\check\rho,(s_2-1)+} \arrow[d]\\
\mathfrak{b}^{-} \arrow[r] & \mathfrak{g}.
\end{tikzcd}
\end{equation*}

Similarly, we have

$$N((t))_{s_1}=K_{s_2\check{\rho},0+}\cap N((t))$$

\noindent and $N((t))_{s_1}$ is again a normal subgroup of $N((t))_{s_2}$. Denoting the quotient by $V$, we see that $V$ is the image of $N[[t]]_{s_2}$ inside $P_{s_2\check{\rho}}/K_{s_2\check{\rho},0+}$. We will often identify $U$ and $V$ with the corresponding graded subspaces of $\mathfrak{g}((t))$.

Let $C$ denote the category $D(G((t)))^{B^{-}[[t]]_{s_2}N((t))_{s_1},\psi}$. Then we have actions of $D(U)$ and $D(V)$ on $C$, and $\iota_{{s_1},{s_2},*}$ becomes the map

$$C^U\xrightarrow{\operatorname{Oblv}} C\xrightarrow{\operatorname{Av}_*} C^V.$$

As in Section 4, we consider the grading $\mathfrak{g}=\oplus\mathfrak{g}_i$ induced by $\operatorname{ad}{\check{\rho}}$. Then $\psi$ corresponds to an element of $\mathfrak{g}_1^*$. The Killing form $\langle -,-\rangle$ identifies $\mathfrak{g}_1^*$ with $\mathfrak{g}_{-1}$, so $\psi$ must come from an element $p_{-1}\in\mathfrak{g}_{-1}$. By the nondegeneracy of $\psi$, there is a (unique) choice of $p_1\in\mathfrak{g}_1$ so that $\{p_{-1},2\check{\rho},p_1\}$ is a $\mathfrak{sl}_2$ triple. We have a natural splitting 

$$\mathfrak{b}^-=\operatorname{ker}\operatorname{ad}{p_{-1}}\bigoplus[p_{1},\displaystyle\bigoplus_{i\leq 0}\mathfrak{g}_{i-1}].$$

Now define a subspace $U'\subseteq U$ by the pullback square

\begin{equation*}
\begin{tikzcd}
U' \arrow[r] \arrow[d] & U \arrow[d]\\
\lbrack p_{-1},\displaystyle\bigoplus_{i\leq 0}\mathfrak{g}_{i+1} \rbrack \arrow[r] & \mathfrak{b}^{-}.
\end{tikzcd}
\end{equation*}

\subsubsection{Proof schematic for fully faithfulness}

We claim that the map

$$C^{U'}\xrightarrow{\operatorname{Oblv}} C\xrightarrow{\operatorname{Av}_*} C^V$$

\noindent is an equivalence. In particular, our original map $C^U\rightarrow C^V$ is naturally equivalent to the map $C^U\rightarrow C^{U'}$, which is clearly fully faithful. To prove this, decompose $U'$ and $V$ into graded pieces $U'_i$ and $V_i$ as above. For $n$ a nonnegative integer, let $W_n$ be the vector space $U'_0\oplus\cdots\oplus U'_{1-n}\oplus V_{n+1}\oplus\cdots$. So for $n=0$ we have $W_0\cong V$, while for sufficiently large $n$ we have $W_n\cong U'$.

Let us construct an action of each $W_n$ on $C$. We already have actions of $U'$ and $V$, so we just need to show that the actions of $U'_0\oplus\cdots\oplus U'_{1-n}$ and $V_{n+1}\oplus\cdots$ commute. To prove this, let us work at the level of Lie algebras. We need to show that

$$[U'_0\oplus\cdots\oplus U'_{1-n},V_{n+1}\oplus\cdots]\subseteq\operatorname{ker}{\psi}\subseteq\mathfrak{b}^{-}[[t]]_{s_2}\mathfrak{n}((t))_{s_1}.$$

Inspecting the degrees, we see that $[U'_0\oplus\cdots\oplus U'_{1-n},V_{n+1}\oplus\cdots]$ must lie in degrees $2$ and above (with respect to $\operatorname{ad}\check{\rho}.$) In particular, if we can show that it is contained in $\mathfrak{n}((t))_{s_1}$, it will automatically be killed by $\psi$. Because $U'\subseteq\mathfrak{k}_{s_2\check{\rho},s_2-1}$ and $V\subseteq\mathfrak{p}_{s_2\check{\rho}}$, we have $[U',V]\subseteq\mathfrak{k}_{s_2\check{\rho},s_2-1}.$ Thus,

\begin{align*}
[U'_0\oplus\cdots\oplus U'_{1-n},V_{n+1}\oplus\cdots]&\subseteq\mathfrak{k}_{s_2\check{\rho},s_2-1}\cap\mathfrak{n}((t))\\
&\subseteq\mathfrak{k}_{s_2\check{\rho},0+}\cap\mathfrak{n}((t))\\
&\subseteq\mathfrak{p}_{s_1\check{\rho}}\cap\mathfrak{n}((t))\\
&=\mathfrak{n}((t))_{s_1}
\end{align*}

\noindent as desired.

Our strategy for relating $C^{U'}$ and $C^V$ will be to show that all the $C^{W_n}$ are isomorphic. As before, we have maps

$$C^{W_{n+1}}\xrightarrow{\operatorname{Oblv}} C^{W_{n+1}\cap W_n}\xrightarrow{\operatorname{Av}_*} C^{W_n}.$$

Let $D$ be the category $C^{W_{n+1}\cap W_n}$. Then we want to understand the map $D^{U'_{-n}}\rightarrow D^{V_{n+1}}.$ If we repeat the logic above, we see that the actions of $U'_{-n}$ and $V_{n+1}$ do not commute - instead, their commutation is controlled by the character $\psi$.

\subsubsection{Digression: the Heisenberg trick}

We will place our situation into a more general framework. 

\begin{proposition}\label{Heisenberg}
Let $X$ and $Y$ be vector spaces with a perfect pairing $X\times Y\rightarrow k$, and let $H$ be the corresponding Heisenberg group, with underlying variety isomorphic to $X\times Y\times\mathbb{G}_a.$ Take $C$ a category acted on by $H$. Assume that the center $Z\cong\mathbb{G}_a$ of $H$ acts on $C$ via a nontrivial character $\psi$. Then for any category $C$ acted on by $H$, the composition

\[C^X\rightarrow C\rightarrow C^Y\]

\noindent is an equivalence.
\end{proposition}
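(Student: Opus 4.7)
The plan is to analyze the action of the abelian normal subgroup $XZ\subseteq H$ via Fourier duality, and to exploit the fact that the induced $Y$-action on $X^{*}$ is simply transitive.

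First, I would observe that $XZ$ is abelian and normal in $H$: for normality, the Heisenberg cocycle sends $(x,z)\in XZ$ under conjugation by $y\in Y$ to $(x,z+\langle x,y\rangle)$, which stays in $XZ$. Fourier-dualizing the restricted $D(XZ)$-action on $C$ presents $C$ as a sheaf of dg-categories $\mathcal{C}$ over $(XZ)^{*}\cong X^{*}\times Z^{*}$. The hypothesis that $Z$ acts through $\psi$ concentrates $\mathcal{C}$ on the affine slice $X^{*}\times\{\psi\}\cong X^{*}$. Under this Fourier picture, the conjugation action of $Y$ on $XZ$ translates into a $Y$-action on $X^{*}$ by translation through $y\mapsto\psi\cdot\langle-,y\rangle$; because $\psi\neq 0$ and the pairing $X\times Y\to k$ is perfect, this action is simply transitive, i.e.\ $X^{*}$ is a $Y$-torsor over a point.

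Second, I would compute both invariant categories within this geometric model. The subcategory $C^{X}\subseteq C$ of $X$-invariant objects corresponds, Fourier-dually, to those supported at $0\in X^{*}$, and one obtains $C^{X}\simeq\mathcal{C}|_{0}$ with the embedding $C^{X}\hookrightarrow C$ realized by pushforward from the origin. The subcategory $C^{Y}$ is computed using the fact that a $Y$-equivariant sheaf over the $Y$-torsor $X^{*}$ descends canonically to its fiber at any chosen base point; choosing the origin gives $C^{Y}\simeq\mathcal{C}|_{0}$ as well. Unwinding the two identifications, the composition $C^{X}\to C\to C^{Y}$ reduces to the identity on $\mathcal{C}|_{0}$: the fiber at the origin, spread out via the free and transitive $Y$-orbit and then descended, returns the original object.

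The main obstacle I anticipate is carrying out Step 1 rigorously: justifying the Fourier identification of $D(XZ)$-actions with sheaves of categories over $(XZ)^{*}$, and verifying that the Heisenberg cocycle genuinely induces the asserted translation action of $Y$ on $X^{*}$ after restricting to the $\psi$-fiber. Once this sheaf-theoretic picture is in place, Step 2 is essentially an instance of the general slogan that for a free group action, invariants are computed by the fiber at any single point; in effect, the argument realizes Proposition \ref{Heisenberg} as a categorical manifestation of Stone--von Neumann uniqueness for the Heisenberg representation with central character $\psi$.
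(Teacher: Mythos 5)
Your approach is a genuinely different route from the paper's and is recognizable as the ``torsor-descent'' formulation of Stone--von Neumann: present $C$ Fourier-dually as a sheaf of categories over $X^{*}$, observe that $Y$ acts by translations making $X^{*}$ a $Y$-torsor, and conclude that both $C^{X}$ and $C^{Y}$ compute the fiber at the origin. The paper's proof is instead in two steps of a more hands-on kind: first reduce to the universal case $C \cong D(H)^{Z,\psi}$ (using that invariants along a subgroup commute with $-\otimes_{D(H)} C$), and then identify $C^{X} \simeq D(Y)$, $C^{Y} \simeq D(X)$ and show by an explicit pull-tensor-push computation that the composite is precisely the Fourier transform with kernel $((p_{2}\circ p) - (q_{1}\circ q))^{*}\psi$, i.e.\ the pullback of $\psi$ along $[-,-]_{H}\colon X\times Y\to Z$. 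What your approach buys is conceptual clarity and wider applicability (it manifestly only uses the normality of $XZ$ and the torsor structure); what the paper's approach buys is that it requires no sheaf-of-categories machinery at all.

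That said, your argument as written has real gaps in Step~2 that go beyond the obstacle you flagged in Step~1. First, you need to justify that for a $Y$-equivariant sheaf of categories $\mathcal{C}$ over the $Y$-torsor $X^{*}$, the $Y$-invariants $\Gamma(X^{*},\mathcal{C})^{Y}$ agree with the fiber $\mathcal{C}|_{0}$; this requires knowing that the equivariance canonically trivializes $\mathcal{C}$, which is plausible but is a statement one must prove in the $\infty$-categorical setting. Second, and more seriously, you identify both $C^{X}$ and $C^{Y}$ with $\mathcal{C}|_{0}$ but never actually verify that, under these two identifications, the composite $C^{X}\to C\to C^{Y}$ is the identity rather than some automorphism of $\mathcal{C}|_{0}$; the sentence ``the fiber at the origin, spread out ... and then descended, returns the original object'' asserts this but does not prove it, and a priori the two routes to $\mathcal{C}|_{0}$ could differ by a twist. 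The paper sidesteps both issues by reducing to $C\cong D(H)^{Z,\psi}$, where the composite can be written down explicitly and recognized as a Fourier transform; your argument would be cleanest if it incorporated the same reduction step before invoking torsor descent, since then the descent claim only needs to be checked for the universal $C$.
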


The proof will be delayed until the end of this section.

\subsubsection{The Heisenberg trick: application}

More precisely, let $H$ be the space $U'_{-n}\oplus V_{n+1}\oplus\mathbb{G}_a$. We endow $H$ with a (not necessarily commutative) group structure, which makes it a central extension of the commutative group $U'_{-n}\oplus V_{n+1}$. The only nontrivial commutators will come from the map $[-,-]_H: U'_{-n}\times V_{n+1}\mapsto\mathbb{G}_a$, which we define to equal $\psi([-,-]_{N((t))}).$ The actions of $U$ and $V$ extend to an action of $H$ on $C$, for which $C^{\mathbb{G}_a,\psi}\cong C.$

We claim that $[-,-]_H$ is a perfect pairing between $U'_{-n}$ and $V_{n+1}$. To see this, note that the maps from $U$ and $V$ to $\mathfrak{g}$ identify them with the subspaces $\displaystyle\bigoplus_{\substack{i\leq 0\\ (1-i)s_2\in\mathbb{Z}}}\mathfrak{g}_i$ and $\displaystyle\bigoplus_{\substack{i\geq 1\\ is_2\in\mathbb{Z}}}\mathfrak{g}_i$, respectively.

Therefore, if $(n+1)s_2\not\in\mathbb{Z}$, then both $U'_{-n}$ and $V_{n+1}$ are trivial and there is nothing to prove. On the other hand, if $(n+1)s_2\in\mathbb{Z},$ we can identify $U'_{-n}$ with $[p_{1},\mathfrak{g}_{-n-1}]$ and $V_{n+1}$ with $\mathfrak{g}_{n+1}$. Then we have
\begin{align*}
[[p_1,a],b]_H&=\langle p_{-1},[[p_1,a],b]\rangle\\
&=-\langle [p_{-1},b],[p_1,a]\rangle.
\end{align*}

As the Killing form is invariant under the $\mathfrak{sl}_2$ action, this is a perfect pairing, as desired. Thus, by Proposition \ref{Heisenberg}, the map $D^{U'_{-n}}\rightarrow D^{V_{n+1}}$ is an equivalence, as desired.

\subsubsection{Depth of the quotient}

It remains to show that the quotient of $C^{U}\rightarrow C^{U'}$ is purely of depth $s_2-1$. We will do so by relating said quotient to $D(G((t))/K_{s_2\check{\rho},(s_2-1)+})^{\circ},$ which we know to be purely of depth $s_2-1$ by Theorem \ref{generation}.

Note that $N((t))_{s_1}=P_{s_1\check{\rho}}\cap N((t))$ contains $K_{s_2\check{\rho},s_2-1}\cap N((t)).$ We denote by $E$ the category $D(G((t))/B^{-}[[t]]_{s_2})^{(K_{s_2\check{\rho},s_2-1}\cap N((t)),\psi)}$. Then $C$ can be expressed as a relative tensor product with $E$, so it suffices to show that the quotient of $E^{U}\rightarrow E^{U'}$ is purely of depth $s_2-1$.

For ease of notation, we will identify $K_{s_2\check{\rho},s_2-1}/K_{s_2\check{\rho},(s_2-1)+}$ with a subspace of $\mathfrak{g}$. Now we can rewrite all categories in sight in terms of $D(G((t))/K_{s_2\check{\rho},(s_2-1)+})$:

\begin{align*}
E&\cong D(G((t))/K_{s_2\check{\rho},(s_2-1)+})\otimes_{D((K_{s_2\check{\rho},s_2-1}/K_{s_2\check{\rho},(s_2-1)+})^*)}D(\psi+\mathfrak{n}^{\perp})\\
E^U&\cong D(G((t))/K_{s_2\check{\rho},(s_2-1)+})\otimes_{D((K_{s_2\check{\rho},s_2-1}/K_{s_2\check{\rho},(s_2-1)+})^*)}D(\psi)\\
E^{U'}&\cong D(G((t))/K_{s_2\check{\rho},(s_2-1)+})\otimes_{D((K_{s_2\check{\rho},s_2-1}/K_{s_2\check{\rho},(s_2-1)+})^*)}D(\psi+(\mathfrak{n}+U')^{\perp}).
\end{align*}

Comparing with Theorem \ref{generation}, we see that it suffices to show that

$$(\psi+(\mathfrak{n}+U')^{\perp})\backslash\psi\subseteq(K_{s_2\check{\rho},s_2-1}/K_{s_2\check{\rho},(s_2-1)+})^{*,\circ}.$$

We will show that the image of $(\psi+(\mathfrak{n}+U')^{\perp})\backslash\psi$ in $\mathfrak{g}^*$ is $G$-semistable, which will imply the desired statement. Let us dualize everything via the Killing form. Then $\psi$ becomes $p_{-1}$, and $(\mathfrak{n}+U')^{\perp}$ becomes $\operatorname{ker}\operatorname{ad}p_1\cap(K_{s_2\check{\rho},s_2-1}/K_{s_2\check{\rho},(s_2-1)+})^*\subseteq\operatorname{ker}\operatorname{ad}p_1.$ But the theory of the Kostant section tells us that the map $p_{-1}+\operatorname{ker}\operatorname{ad}p_1\rightarrow\mathfrak{g}//G$ is an isomorphism sending $p_{-1}$ to $0$, so
$$(p_{-1}+\operatorname{ker}\operatorname{ad}p_1\cap(K_{s_2\check{\rho},s_2-1}/K_{s_2\check{\rho},(s_2-1)+})^*)\backslash p_{-1}\subseteq(p_{-1}+\operatorname{ker}\operatorname{ad}p_1)\backslash p_{-1}$$

\noindent is $G$-semistable, as desired.

\subsubsection{The Heisenberg trick: proof}
\begin{proof}
Let us start with a reduction. We have:

\begin{align*}
C&\cong C^{Z,\psi}\\
&\cong(C\underset{D(H)}{\otimes}D(H))^{Z,\psi}\\
&\cong C\underset{D(H)}{\otimes}D(H)^{Z,\psi}.
\end{align*}

As the functor of taking invariants with respect to $U$ (or $V$) commutes with tensor products, this reduces us to the case of $C\cong D(H)^{Z,\psi}$. We need to show that the map

\[D(H/X)^{Z,\psi}\rightarrow D(H)^{Z,\psi}\rightarrow D(H/Y)^{Z,\psi}\]

\noindent is an equivalence. Label the maps of interest as in the following diagram:

\begin{equation*}
\begin{tikzcd}
& & H \arrow{ld}[swap]{p} \arrow{rd}{q}& &\\
& H/X \arrow{ld}[swap]{p_1} \arrow{d}{p_2} & & H/Y \arrow{d}{q_1}\arrow{rd}{q_2} &\\
Y & Z & & Z & X\\
\end{tikzcd}
\end{equation*}

\noindent where we use the canonical isomorphisms $H/X\cong (Y\times Z\times X)/X\cong Y\times Z$ and $H/Y\cong Z\times X.$ These identifications also show that $D(H/X)^{Z,\psi}$ and $D(H/Y)^{Z,\psi}$ are canonically equivalent to $D(Y)$ and $D(X)$. Identifying $\psi$ with the corresponding character D-module on $Z$, we wish to show that the composite

\[D(Y)\xrightarrow{p_1^*(-)\otimes p_2^*\psi} D(H/X)\xrightarrow{p^*} D(H)\xrightarrow{q_*} D(H/Y)\xrightarrow{q_{2*}(-\otimes q_1^*(-\psi))} D(X)\]

\noindent is an equivalence. Applying the projection formula, we see that this can be rewritten as the compositon

\[D(Y)\xrightarrow{(p_1\circ p)^*}D(H)\xrightarrow{\otimes\mathcal{F}}D(H)\xrightarrow{(q_2\circ q)_*}D(X)\]

\noindent with $\mathcal{F}\cong((p_2\circ p)-(q_1\circ q))^*\psi.$ By inspection, we see that $\mathcal{F}$ is invariant under $Z$, and that the corresponding D-module on $H/Z\cong X\times Y$ is the pullback of $\psi$ along $[-,-]_H:X\times Y\rightarrow Z.$ But this is the kernel of a Fourier transform between $D(X)$ and $D(Y)$, which is an equivalence as desired.
\end{proof}
\end{proof}
 
 \subsection{Supplements}
 \subsubsection{Comparison of $\iota_{s_1,s_2,*}$ and $\iota_{s_1,s_2,!}$}
 In \cite{Whit}, they introduce, for $s_1$ and $s_2$ integral, both our functor $\iota_{s_1,s_2,*}$ and another functor $\iota_{s_1,s_2,!}$. (Note that our definition of $\iota_{s_1,s_2,*}$ makes sense even when $s_1$ and $s_2$ are not consecutive.) They then prove that these two functors coincide up to a shift. This result is crucial for their proof of the equivalence between the operations of Whittaker invariants and Whittaker coinvariants.
 
 Let us generalize this to our setting. Our functor $\iota_{s_1,s_2,!}$ will actually be a shift of the one considered in \cite{Whit}. Because of this shift, our $\iota_{s_1,s_2,*}$ and $\iota_{s_1,s_2,!}$ will coincide on the nose.
 
 Define $\iota_{s_1,s_2,!}$ as the (partially defined) composition
 
 \[D(G((t)))^{B^{-}[[t]]_{s_1}N((t))_{s_1},\psi}\xrightarrow{\operatorname{Oblv}}D(G((t)))^{B^{-}[[t]]_{s_2}N((t))_{s_1},\psi}\xrightarrow{\operatorname{Av}_!}D(G((t)))^{B^{-}[[t]]_{s_2}N((t))_{s_2},\psi}.\]
 
 A priori, this functor may only be partially defined, as the functor $\operatorname{Av}_!$ is only partially defined. However, we have:
 
 \begin{theorem}
 The functor $\iota_{s_1,s_2,!}$ is defined on all of $D(G((t)))^{B^{-}[[t]]_{s_1}N((t))_{s_1},\psi}$ and is equivalent to the functor $\iota_{s_1,s_2,*}$.
 \end{theorem}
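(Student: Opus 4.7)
The plan is to reduce to the case of consecutive jump loci and then establish a $!$-version of the Heisenberg trick (Proposition \ref{Heisenberg}) that runs parallel to the one used in the proof of Theorem \ref{consecutivejumploci}. For the reduction, I would observe that the claim is compatible with composition: if $\iota_{s_1,s_2,!}$ is defined and agrees with $\iota_{s_1,s_2,*}$ for every consecutive pair of jump loci, then composing along a finite chain $s_1 = t_0 < t_1 < \cdots < t_k = s_2$ gives both the definedness and the equality in general. Hence it suffices to treat $s_1, s_2$ consecutive and work in the setup of Section 5.4, where $\iota_{s_1,s_2,*}$ factors as the fully faithful inclusion $C^U \hookrightarrow C^{U'}$ followed by the equivalence $C^{U'} \xrightarrow{\sim} C^V$ produced by iterating Proposition \ref{Heisenberg} along the $W_n$ filtration.

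Next, I would prove a $!$-version of the Heisenberg trick: in the setup of Proposition \ref{Heisenberg}, the partially defined composite $C^X \to C \xrightarrow{\operatorname{Av}_!} C^Y$ is in fact defined on all of $C^X$ and, after the normalizing shift, agrees with the $\operatorname{Av}_*$ composite. The proof mirrors that of Proposition \ref{Heisenberg}: one reduces to $C \cong D(H)^{Z,\psi}$, where in the notation of the diagram in Section 5.4.5 the $*$-composite $D(Y) \to D(X)$ is identified with the Fourier transform along the kernel $\mathcal{F} \cong ((p_2\circ p) - (q_1\circ q))^*\psi$. Replacing $q_*$ by $q_!$ in the same computation yields the same kernel up to a cohomological shift equal to $\dim Y$, coming from the fact that $q: H \to H/Y$ is a trivial $Y$-bundle, and this shift is the only discrepancy. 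Applying this $!$-Heisenberg statement inductively along the $W_n$ filtration, exactly mirroring the proof of Theorem \ref{consecutivejumploci}, shows that $\operatorname{Av}_!^V$ is defined on $C^{U'}$ and that the resulting composite $C^{U'} \to C^V$ equals the $\operatorname{Av}_*$ version up to a fixed total shift. Precomposing with the fully faithful embedding $C^U \hookrightarrow C^{U'}$ then gives the desired identification of $\iota_{s_1,s_2,!}$ with $\iota_{s_1,s_2,*}$.

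The main obstacle I anticipate is bookkeeping the shifts correctly. I must verify that the global shift built into our definition of $\iota_{s_1,s_2,!}$ (relative to the one in \cite{Whit}) exactly cancels the accumulated shifts from the iterated Heisenberg Fourier transforms: each step at level $n$ contributes $\dim V_{n+1}$, and by the Heisenberg duality $\dim U'_{-n} = \dim V_{n+1}$ established in Section 5.4, summing over $n$ matches the natural codimension $\dim V$ appearing in the normalization. Once this dimension count is in place, the equivalence $\iota_{s_1,s_2,*} \cong \iota_{s_1,s_2,!}$ on consecutive jump loci follows formally from Fourier inversion, and the composition argument of the first paragraph extends it to arbitrary $s_1 \leq s_2$.
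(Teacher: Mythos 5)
Your high-level outline matches the paper's strategy --- reduce to consecutive jump loci, then reduce to a ``$!$-Heisenberg'' analogue of Proposition~\ref{Heisenberg} and run the $W_n$ induction --- so the architecture is fine. However, the central step, the proof of the $!$-Heisenberg trick itself, has a genuine gap.

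You assert that replacing $q_*$ by $q_!$ ``yields the same kernel up to a cohomological shift equal to $\dim Y$, coming from the fact that $q: H \to H/Y$ is a trivial $Y$-bundle, and this shift is the only discrepancy.'' This is false as stated: for a non-proper projection such as $pr_2\colon X\times Y\to X$ with $Y\cong\mathbb{A}^n$, the functors $pr_{2,*}$ and $pr_{2,!}$ do \emph{not} differ merely by a shift on arbitrary D-modules. They agree up to shift only on objects with a specific behavior near the boundary of a compactification of $Y$, and the entire content of the $!$-Heisenberg statement is that tensoring with the Fourier kernel $\mathcal{F}'$ forces this boundary behavior. That is a substantive assertion about the Fourier kernel, not a formal consequence of $q$ being a trivial affine bundle, and your proposal neither proves it nor reduces it to a known citation. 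The paper's Proposition~\ref{Heisenberg2} fills exactly this hole: it reduces to rank one by taking orthogonal bases, compactifies $Y\hookrightarrow\overline{Y}\cong\mathbb{P}^1$ so the remaining projection is proper, and then shows $j_*(pr_1^*\mathcal{G}\otimes\mathcal{F}')\cong j_!(pr_1^*\mathcal{G}\otimes\mathcal{F}')$ by observing that after restriction to $X\times(\overline{Y}\setminus 0)$ the object is $(\mathbb{G}_a,\psi)$-equivariant, so it admits no nonzero maps to D-modules supported on the divisor at infinity (where the $\mathbb{G}_a$-action is trivial). Without some version of this compactification/equivariance argument your proof does not go through.

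A secondary point: the paper normalizes $\iota_{s_1,s_2,!}$ so that no shifts need to be tracked at all (``our $\iota_{s_1,s_2,*}$ and $\iota_{s_1,s_2,!}$ will coincide on the nose''), so the shift bookkeeping in your last paragraph, while not wrong in spirit, adds complexity the paper avoids by a better choice of normalization.
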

 \begin{proof}
 We use the notation of the proof of Theorem \ref{consecutivejumploci}. The map $\iota_{s_1,s_2,!}$ is then the composition
 
 \[C^{U}\xrightarrow{\operatorname{Oblv}} C\xrightarrow{\operatorname{Av}_!} C^V.\]
 
 As for Theorem \ref{consecutivejumploci}, we will prove that the composition
 
 \[C^{U'}\xrightarrow{\operatorname{Oblv}} C\xrightarrow{\operatorname{Av}_!} C^V\]
 
 \noindent is an equivalence, and in fact is itself equivalent to the composition
 
 \[C^{U}\xrightarrow{\operatorname{Oblv}} C\xrightarrow{\operatorname{Av}_*} C^V.\]
 
 Tracing through the proof of Theorem \ref{consecutivejumploci}, we see that this boils down to showing the following extension of Proposition \ref{Heisenberg}.
 
 \begin{proposition}\label{Heisenberg2}
Let $X$ and $Y$ be vector spaces with a perfect pairing $X\times Y\rightarrow k$, and let $H$ be the corresponding Heisenberg group, with underlying variety isomorphic to $X\times Y\times\mathbb{G}_a.$ Take $C$ a category acted on by $H$. Assume that the center $Z\cong\mathbb{G}_a$ of $H$ acts on $C$ via a nontrivial character $\psi$. Then for any category $C$ acted on by $H$, the compositions

\[C^X\rightarrow C\xrightarrow{\operatorname{Av}_*} C^Y\]
\[C^X\rightarrow C\xrightarrow{\operatorname{Av}_!} C^Y\]

\noindent are equivalent.
\end{proposition}
\begin{proof}
As in the proof of Proposition \ref{Heisenberg}, we can reduce to the case of $C\cong D(H)^{Z,\psi}.$ Let $pr_1$ and $pr_2$ be the projections from $X\times Y$ to $X$ and $Y$, respectively. Recall that we identified the first composition (with $\operatorname{Av}_*$) with the Fourier transform

\[D(Y)\xrightarrow{pr_1^*}D(X\times Y)\xrightarrow{\otimes\mathcal{F}'}D(X\times Y)\xrightarrow{pr_{2,*}}D(X)\]

\noindent with $\mathcal{F}'$ the pullback of $\psi$ along $[-,-]_H:X\times Y\rightarrow Z.$ A similar argument identifies the second composition with

\[D(Y)\xrightarrow{pr_1^*}D(X\times Y)\xrightarrow{\otimes\mathcal{F}'}D(X\times Y)\xrightarrow{pr_{2,!}}D(X).\]

Taking orthogonal bases for $X$ and $Y$, we can reduce to the case where they are both $1$-dimensional. Compactify $Y$ to $\overline{Y}\cong\mathbb{P}^1,$ and let $j$ denote the open immersion $X\times Y\rightarrow X\times\overline{Y}.$ As $X\times\overline{Y}\rightarrow X$ is proper, it suffices to show that the compositions

\[D(Y)\xrightarrow{pr_1^*}D(X\times Y)\xrightarrow{\otimes\mathcal{F}'}D(X\times Y)\xrightarrow{j_*}D(X\times\overline{Y})\]

\noindent and

\[D(Y)\xrightarrow{pr_1^*}D(X\times Y)\xrightarrow{\otimes\mathcal{F}'}D(X\times Y)\xrightarrow{j_!}D(X\times\overline{Y})\]

\noindent coincide. As they are tautologically equivalent on $X\times Y$, it suffices to show that they coincide when restricted to $X\times(\overline{Y}\backslash 0)$.

Choose an identification of $Z$ with $\mathbb{G}_a$. Then the preimage under $[-,-]_H$ of $1\in Z$ defines an isomorphism between $X\backslash 0$ and $Y\backslash 0$, which extends to an isomorphism $X\cong\overline{Y}\backslash 0.$ We can define an action of $\mathbb{G}_a$ on $X\times(\overline{Y}\backslash 0)\cong X\times X$ by $a(x_1,x_2)=(x_1+ax_2,x_2).$

Let $\mathcal{G}$ be an object of $D(Y)$. Note that $pr_1^*\mathcal{G}\otimes\mathcal{F}'|_{X\times(Y/0)}$ is naturally $(\mathbb{G}_a,\psi)$-equivariant, and thus so is $j_*(pr_1^*\mathcal{G}\otimes\mathcal{F}')|_{X\times(\overline{Y}\backslash 0)}.$ We must show this object satisfies the defining property of $j_!$, i.e., that there are no nontrivial morphisms from it to any objects supported on $X\times\infty\subset X\times(\overline{Y}\backslash 0)$.

Let $i$ be the inclusion $X\times\infty\rightarrow X\times(\overline{Y}\backslash 0),$ and take $\mathcal{G}'$ some D-module on $X\times\infty.$ Any morphism $j_*(pr_1^*\mathcal{G}\otimes\mathcal{F}')|_{X\times(\overline{Y}\backslash 0)}\rightarrow i_*\mathcal{G}'$ must factor through $\operatorname{Av}_*^{\mathbb{G}_a,\psi}i_*\mathcal{G}\cong i_*\operatorname{Av}_*^{\mathbb{G}_a,\psi}\mathcal{G}.$ But the action of $\mathbb{G}_a$ on $X\times\infty$ is trivial, so $\operatorname{Av}_*^{\mathbb{G}_a,\psi}\mathcal{G}$ is the zero object. Thus all such morphisms must be trivial, as desired.
\end{proof}
 \end{proof}
 
 \subsubsection{Generalized vacuum modules}
 
 After introducing the adolescent Whittaker construction, \cite{Whit} uses it to study a sequence of modules $\mathcal{W}^n_{\kappa}$ over the $\mathcal{W}$-algebra at level $\kappa$. Let $\Psi$ be the functor of Drinfeld-Sokolov reduction. Their Theorem 4.5.1 shows that the complex
 
 \[\Psi(\operatorname{ind}_{\operatorname{Lie}\overset{\circ}{I}_n}^{\hat{\mathfrak{g}}_{\kappa}}\psi_{\overset{\circ}{I}_n})\]
 
\noindent is supported in exactly one cohomological degree, and they define $\mathcal{W}^n_{\kappa}$ to be the module in that degree.
 
 It thus seems likely that, for $r$ a nonnegative rational number, the complex
 
\[\Psi(\operatorname{ind}_{\operatorname{Lie}\overset{\circ}{I}_{r+1}}^{\hat{\mathfrak{g}}_{\kappa}}\psi_{\overset{\circ}{I}_{r+1}})\]

\noindent is also supported in one cohomological degree. Furthermore, the module $\mathcal{W}^{r+1}_{\kappa}$ in that degree should be equivalent to the one constructed in Remark 4.4.3 of \cite{gurbirsam}.

It seems plausible that this can be proven with the methods of \cite{Whit}. However, our methods are less explicit and do not seem applicable to this problem, and so we do not pursue this direction further.
 
 \appendix
 
 \section{Appendix}
 
 Here we prove a few simple facts about D-modules for which we could not find references in the literature.
 
 \begin{lemma}\label{appendixgen}
 Let $X$ and $Y$ be varieties over $k$, and let $f:X\rightarrow Y$ be a morphism surjective on $k$-points. Then the image of $f_*$ generates $D(Y)$.
 \end{lemma}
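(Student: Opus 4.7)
My plan is to reformulate the generation statement in terms of conservativity of the right adjoint, then pull back to $k$-points. The pushforward $f_*$ admits a continuous right adjoint $f^!$, and by a standard adjunction argument, the image of $f_*$ generates $D(Y)$ under colimits if and only if $f^!$ has trivial kernel, i.e., is conservative. So I would first reduce the problem to showing: if $f^! \mathcal{G} = 0$ then $\mathcal{G} = 0$.

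To verify conservativity of $f^!$, I would pass further to $k$-points of $Y$. Given any $y \in Y(k)$, the surjectivity hypothesis yields some $x \in X(k)$ with $f \circ i_x = i_y$, hence $i_y^! = i_x^! \circ f^!$. In particular, $f^! \mathcal{G} = 0$ implies $i_y^! \mathcal{G} = 0$ for every $y \in Y(k)$. It therefore suffices to establish the auxiliary statement that the family $\{ i_y^! \}_{y \in Y(k)}$ is conservative on $D(Y)$.

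I would prove the auxiliary statement by Noetherian induction on $\dim Y$. The base case $\dim Y = 0$ is trivial since $D(\operatorname{pt}) \cong \operatorname{Vect}$. For the inductive step, generic smoothness (using $\operatorname{char} k = 0$) provides a dense open immersion $j : U \hookrightarrow Y$ with $U$ smooth; let $i : Z \hookrightarrow Y$ denote the closed complement, which has $\dim Z < \dim Y$. The standard D-module recollement triangle attached to $(j, i)$ reduces the vanishing of $\mathcal{G}$ to the vanishing of both $j^* \mathcal{G}$ on $U$ and $i^! \mathcal{G}$ on $Z$. The inductive hypothesis applied to $i^!\mathcal{G}$ on $Z$, combined with the identity $(i \circ i_y)^! = i_y^! \circ i^!$, reduces $i^!\mathcal{G} = 0$ to the given vanishing of $i_y^!\mathcal{G}$ for $y \in Z(k)$. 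So the only remaining case is the smooth one, where I would use that for a closed $k$-point $y$ in a smooth variety of dimension $n$, the functor $i_y^!$ agrees with $i_y^*[-n]$ up to a one-dimensional twist, so that the vanishing of all derived fibers at closed points forces $\mathcal{G}|_U = 0$ by a Nakayama-type argument on the top nonvanishing cohomology sheaf.

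The main obstacle I anticipate is the smooth base case, which rests on the topological input that a complex of quasi-coherent sheaves on a smooth variety over $k$ whose derived fiber at every $k$-point vanishes is itself zero. This is standard but requires some care, especially regarding boundedness of $\mathcal{G}$ and the passage between $i_y^!$ and $i_y^*$; the rest of the argument is essentially formal adjunction and recollement.
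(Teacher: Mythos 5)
Your reduction from generation to conservativity of the right adjoint $f^{!}$ of $f_{*}$ is fine, as is the observation that $f^{!}\mathcal{G}=0$ forces $i_{y}^{!}\mathcal{G}=0$ for every $y\in Y(k)$. The problem is the auxiliary claim that the family $\{i_{y}^{!}\}_{y\in Y(k)}$ is conservative on the cocomplete category $D(Y)$: this is \emph{false}, and the failure is not a boundedness technicality that care will fix. Take $Y=\mathbb{A}^{1}$ and let $\mathcal{G}=k(t)$ be the field of rational functions with its natural $D$-module structure. For any closed point $y$, the delta module $\delta_{y}\cong D/D(t-y)$ has a two-step free resolution by right multiplication by $t-y$, and applying this to compute $i_{y}^{!}k(t)$ gives the complex $k(t)\xrightarrow{\,t-y\,}k(t)$, which is acyclic because $t-y$ is invertible in $k(t)$. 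Hence $i_{y}^{!}k(t)=0$ for all $y\in Y(k)$ while $k(t)\neq 0$. The Nakayama-style argument in your smooth base case is only valid for bounded coherent complexes, and since $\mathcal{G}$ ranges over all of $D(Y)$ there is no reduction to that setting. In effect your argument retains only the fiberwise consequences of $f^{!}\mathcal{G}=0$ and discards the rest, which is fatal.

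The paper's proof avoids pointwise restriction altogether. After the same Noetherian and recollement reductions that open your argument, it picks a closed point of $X\times_{Y}\eta_{Y}$, takes its closure $S\subseteq X$, and shrinks $Y$ so that $g\colon S\to Y$ is finite and surjective. Since $g$ factors through $f$, the image of $g_{*}$ lies in that of $f_{*}$, and conservativity of $g^{!}$ is precisely proper descent for $D$-modules along the finite surjection $g$. A single finite cover thus does the work that the family of all closed points cannot. To repair your approach you would need to exploit the vanishing of $f^{!}\mathcal{G}$ along some positive-dimensional subscheme of $X$ dominating $Y$, which is exactly what the subvariety $S$ supplies.
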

 
 \begin{proof}
By Noetherian induction, we can assume that for every proper closed subvariety $Z$ of $Y$, the image of pushforward along $f_Z: X\times_Y Z\rightarrow Z$ generates $D(Z)$. It then suffices to prove the Lemma after base changing along a dense open subvariety $U$ of $Y$. Indeed, letting $Z$ be the complement of $U$ inside $Y$, we see that the image of $f_*$ contains both $D(U)$ and $D(Z)$, which together generate $D(X)$. 

In particular, we can assume that $Y$ is irreducible. Let $\eta_Y$ denote its generic point. Choose any closed point of $X\times_Y \eta_Y$. Its closure in $X$ will be a subvariety $S$ of $X$ so that $g:S\rightarrow Y$ is dominant and generically finite. Passing to an open subset of $Y$, we can assume that $S$ is finite over $Y$.

It now suffices to show that $g_*$ has generating image, or equivalently, that $g^!$ is conservative. But this follows from proper descent for D-modules.
 \end{proof}
 
 \begin{lemma}\label{appendixtensor}
 Let $X$ be a smooth variety over $k$ and $U\subseteq X$ an open subvariety. Consider $D(X)$ as a symmetric monoidal category via the $!$-tensor product and give $\QCoh(X)$ the module structure corresponding to the realization of $D(X)$ via left D-modules. Then
 
 \[\QCoh(X)\otimes_{D(X)}D(U)\cong\QCoh(U). \]
 \end{lemma}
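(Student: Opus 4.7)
The plan is to set $Z = X \setminus U$ with complementary closed immersion $i \colon Z \to X$, and to exploit the recollement picture for both D-modules and quasicoherent sheaves. The restriction functor $j^{*} \colon \QCoh(X) \to \QCoh(U)$ is $D(X)$-linear when we view $\QCoh(U)$ as a $D(X)$-module via the symmetric monoidal functor $j^{*} \colon D(X) \to D(U)$, so the universal property of the tensor product yields a canonical $D(U)$-linear functor $\Phi \colon \QCoh(X) \otimes_{D(X)} D(U) \to \QCoh(U)$, and the task is to show $\Phi$ is an equivalence.

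The main input is that $D_{Z}(X) \to D(X) \xrightarrow{j^{*}} D(U)$ is a fiber sequence of $D(X)$-module categories, where $D_{Z}(X) \subseteq D(X)$ denotes the full subcategory of D-modules set-theoretically supported on $Z$ (so that $j^{*}$ exhibits $D(U)$ as the Verdier quotient). Tensoring over $D(X)$ with $\QCoh(X)$ preserves cofiber sequences, yielding
\[\QCoh(X) \otimes_{D(X)} D_{Z}(X) \to \QCoh(X) \to \QCoh(X) \otimes_{D(X)} D(U).\]
Comparing with the analogous cofiber sequence $\QCoh_{Z}(X) \to \QCoh(X) \to \QCoh(U)$ for quasicoherent sheaves (where $\QCoh_{Z}(X)$ is the full subcategory of set-theoretically $Z$-supported quasicoherent sheaves, equivalent to $\QCoh(\widehat{Z})$ for the formal completion $\widehat{Z}$ of $X$ along $Z$), the lemma reduces to producing a natural equivalence $\QCoh(X) \otimes_{D(X)} D_{Z}(X) \cong \QCoh_{Z}(X)$ compatible with the maps to $\QCoh(X)$.

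To establish this identification I would proceed by Noetherian induction on $Z$. Stratifying $Z$ into its smooth open locus and singular complement, both sides fit into compatible recollements with respect to this stratification, reducing us to the case $Z$ smooth. In that case Kashiwara's lemma identifies $D_{Z}(X) \cong D(Z)$ via $i_{+}$, and the tensor product can be analyzed directly at the level of generators: the transfer bimodule $D_{Z \to X}$ has underlying $\mathcal{O}$-module isomorphic to $i_{*}\operatorname{Sym}(N_{Z/X})$, and tensoring the generator $\mathcal{O}_{X} \in \QCoh(X)$ against $i_{+}\mathcal{O}_{Z}$ reassembles this symmetric algebra into the structure sheaf of $\widehat{Z}$.

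The hardest step is this last identification, where one must recover the full formal neighborhood $\widehat{Z}$ rather than merely the closed fiber $Z$. Although $D(Z)$ is generated by $\mathcal{O}_{Z}$, the induction functor $i_{+}$ produces a D-module $i_{+}\mathcal{O}_{Z}$ whose underlying $\mathcal{O}_{X}$-module is infinite-rank (encoding all normal jets), and one must verify that the tensor product over $D(X)$ correctly assembles this infinite filtration into all infinitesimal neighborhoods of $Z$ rather than truncating to $Z$ itself. Once this is granted, the rest of the argument is a formal manipulation of fiber sequences.
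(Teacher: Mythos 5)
Your proposal takes a genuinely different route from the paper. The paper works entirely on the side of the open subset: it observes that $A := j_*\omega_U$ is an idempotent (commutative) algebra object in $(D(X),\overset{!}{\otimes})$, that $D(U)\cong A\operatorname{-mod}(D(X))$, and that the corresponding algebra $j_*\mathcal{O}_U$ in $\QCoh(X)$ recovers $\QCoh(U)$; the lemma is then an instance of the general identity $C\otimes_{\mathcal{A}}A\operatorname{-mod}(\mathcal{A})\cong A\operatorname{-mod}(C)$ from \cite{GaRo}. You instead work with the complementary closed subset $Z$, pass through the recollement $D_Z(X)\to D(X)\to D(U)$, and reduce the lemma to the identification $\QCoh(X)\otimes_{D(X)}D_Z(X)\cong\QCoh_Z(X)$. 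The reduction itself is valid (since the tensor product preserves cofiber sequences, and the map to $\QCoh(X)$ visibly factors through $\QCoh_Z(X)$), so you have correctly isolated where the content lies.

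However, the step you isolated as "the hardest" is left as a sketch, and the sketch has a concrete error. The underlying $\mathcal{O}_X$-module of $i_+\mathcal{O}_Z$ is \emph{not} the structure sheaf of the formal completion $\widehat{Z}$, nor does the relative tensor product "reassemble" it into $\mathcal{O}_{\widehat{Z}}$: for a closed embedding of smooth varieties, $\operatorname{oblv}(i_+\mathcal{O}_Z)$ is the local cohomology object $\underline{\Gamma}_Z(\mathcal{O}_X)$ up to shift and a line bundle twist (equivalently, the filtered object with graded pieces $i_*\operatorname{Sym}^\bullet(N_{Z/X})$), which is Matlis/Grothendieck-dual to $\mathcal{O}_{\widehat{Z}}$, not equal to it. Concretely, for $X=\mathbb{A}^1$ and $Z=\{0\}$ one gets $k[x,x^{-1}]/k[x]$, not $k[[x]]$. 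The parenthetical claim that $\QCoh_Z(X)\cong\QCoh(\widehat{Z})$ conflates torsion objects with complete objects and is another manifestation of the same confusion. Beyond this, full faithfulness of $\QCoh(X)\otimes_{D(X)}D_Z(X)\to\QCoh(X)$ is asserted but not addressed, and the Noetherian induction/stratification step would require checking compatibility of the two recollements for each locally closed stratum. None of these gaps look hopeless, but as written the argument falls well short of a proof; the paper's idempotent-algebra argument avoids all of them and is a one-paragraph affair. If you want to keep the closed-complement perspective, the right analogue of the paper's argument would be to show that $\underline{\Gamma}_Z\omega_X$ is the idempotent coalgebra cutting out $D_Z(X)$ inside $D(X)$, and likewise for $\QCoh_Z(X)\subseteq\QCoh(X)$, and then cite the same formalism from \cite{GaRo}.
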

 
 \begin{proof}
 Let $A\in D(X)$ be the object $j_*\omega_U$, where $j$ is the inclusion $U\rightarrow X$ and $\omega_U$ is the dualizing D-module on $U$. Proceeding as in the proof of Lemma I.3.3.2.4. of \cite{GaRo}, we can show that $D(U)\cong A-\operatorname{mod}(D(X))$ and $\QCoh(U)\cong A-\operatorname{mod}(\QCoh(U))$. Corollary I.1.8.5.7. of \cite{GaRo} now gives the desired equivalence.
 \end{proof}
 
\printbibliography

\end{document}